\newtheorem{theorem}{Theorem}[section]
\newtheorem{lemma}[theorem]{Lemma}
\newtheorem{corollary}[theorem]{Corollary}
\theoremstyle{definition}
\newtheorem{definition}[theorem]{Definition}
\newtheorem{example}[theorem]{Example}
\theoremstyle{remark}
\newtheorem{remark}[theorem]{Remark}
\numberwithin{equation}{section}
\def\EE{{\mathcal{E}}}
\def\FF{{\mathcal{F}}}
\def\ss{{\mathtt{s}}}
\def\tt{{\mathtt{t}}}
\def\SS{{\mathbf{S}}}
\begin{document}

\title[On symmetric one-dimensional diffusions]{On symmetric one-dimensional diffusions}

\author{ Liping Li}
\address{RCSDS, HCMS, Academy of Mathematics and Systems Science, Chinese Academy of Sciences, Beijing 100190, China.}
\email{{liliping@amss.ac.cn}}
\thanks{The first named author is partially supported by a joint grant (No. 2015LH0043) of China Postdoctoral Science Foundation and Chinese Academy of Science, China Postdoctoral Science Foundation (No. 2016M590145), {NSFC (No. 11688101) and Key Laboratory of Random Complex Structures and Data Science, Academy of Mathematics and Systems Science, Chinese Academy of  Sciences (No. 2008DP173182)}. The second named author is partially supported by NSFC No. 11271240.}

\author{Jiangang Ying}
\address{School of Mathematical Sciences, Fudan University, Shanghai 200433, China.}
\email{jgying@fudan.edu.cn}

\subjclass[2010]{Primary 31C25,  60J60}



\keywords{Symmetric {one-dimensional} diffusions, Dirichlet forms, regular Dirichlet subspaces, Representation theorem.}

\begin{abstract}
The main purpose of this paper is to explore the structure of local and regular Dirichlet forms associated with  { the symmetric one-dimensional diffusions, which are also called the symmetric linear diffusions}. Let $(\EE,\FF)$ be a regular and local Dirichlet form on $L^2(I,m)$, where $I$ is an interval and $m$ is a fully supported Radon measure on $I$. We shall first present a complete representation for $(\EE,\FF)$, which shows that $(\EE,\FF)$ lives on at most countable disjoint `effective' intervals with `adapted' scale function on each interval, and any point outside these intervals is a trap of the {one-dimensional} diffusion. Furthermore, we shall give a necessary and sufficient condition for $C_c^\infty(I)$ being a special standard core of $(\EE,\FF)$ and identify the closure of $C_c^\infty(I)$ in $(\EE,\FF)$ when $C_c^\infty(I)$ is contained but not necessarily dense in $\FF$ relative to the {$\EE_1^{1/2}$-norm}. This paper is partly motivated by a result of Hamza \cite{H75}, stated in \cite[Theorem~3.1.6]{FOT11} and
provides a different point of view to this theorem. To illustrate our results, many examples are provided.
\end{abstract}

\maketitle

\tableofcontents

\section{Background and introduction}\label{SEC0}

\def\IR{\mathbb{R}}
Though the title of this paper is simple and easy to understand, it is still a little bit misleading because it actually focuses on the structure of regular and local Dirichlet forms on real line. However it is roughly correct in the sense that such Dirichlet forms are in one to one correspondence with the symmetric {one-dimensional diffusions, which are also called the symmetric linear diffusions}. A Dirichlet form is a closed symmetric form with Markovian property on $L^2(E,m)$ space, where $E$ is a nice topological space and $m$ is a Radon measure on $E$. Theory of Dirichlet form, established by a group of French mathematicians lead by Beurling, Choquet and Deny, is closely related to probability theory because of its  Markovian property. We refer the notion and terminology in theory of Dirichlet forms to \cite{CF12, FOT11, MR92}.

What do `regular' and 'local' above mean? On one hand, the `regularity' of a Dirichlet form assures that it is associated with a symmetric Markov process, due to a series of important works by M. Fukushima, M. L. Silverstein in 1970's, and Albeverio, Z. M. Ma, M. R\"ockner in 1990's. Note and be careful that the regularity of a Dirichlet
 form is different from the regularity of a {one-dimensional} diffusion. Intuitively
 the `regularity' of a Dirichlet form means that if there are enough continuous functions in its domain, while the `regularity' of a {one-dimensional} diffusion means roughly that it is irreducible as in \eqref{EQ2PXY}. 
On the other hand, the locality of a Dirichlet form is equivalent to continuity of sample paths of the related Markov process. Hence a regular and local Dirichlet form on (or an interval of) real line corresponds to a {one-dimensional} diffusion, which is, by definition, a strong Markovian process with continuous paths on an interval of real line. One-dimensional Brownian motion is a typical example of {one-dimensional} diffusion. There are rich examples and interesting problems in {one-dimensional} diffusions, which have attracted many famous probabilists such as W. Feller, {K. It\^o and H. Mckean}, and the classical results are now collected in many books, such as \cite{I06, IM74, RW87}. 
One-dimensional diffusion has simple and beautiful structure. It is well known that under the `regularity' condition (see \eqref{EQ2PXY} below), a {one-dimensional} diffusion could be characterized essentially by a function, called scale function, a measure, called speed measure, and the killing measure if there is killing inside (see \cite[Chapter~V~\S7]{RW87}).
It was illustrated in \cite{FHY10} that in this case its speed measure is also the unique symmetric measure. Then regular {one-dimensional} diffusions are under the framework of symmetric Markov processes, which generate naturally Dirichlet forms, or energy forms, through infinitesimal generators of the transition semigroups. 
Therefore regular and local Dirichlet forms on real line may be viewed as symmetric {one-dimensional} diffusions.

As said above, the main purpose of this paper is to explore the structure of regular and local Dirichlet forms on real line. It is motivated by one of our recent paper \cite{LY16} in which we felt that the structure might be characterized in some way. 
We would indicate that this paper is also motivated by an early result of Hamza, see \cite[Theorem~3.1.6]{FOT11} and  \cite{H75}, which gives a precise characterization for a symmetric form defined on $C_c^{\infty}(\IR)$ to be closable. The proof of it is completely analytic. We are interested in and try to find its probabilistic and more natural explanation behind it.

To explore the structure, we are going to find a representation of such Dirichlet forms first.
Representation theorem of any kind is always interesting and important, and lives everywhere in mathematics.
For example, as mentioned above, a {one-dimensional} diffusion may be represented by its scale function and speed measure,
and a L\'evy process may be represented by its L\'evy exponent, or more precisely by a non-negative definite matrix, L\'evy measure and a drift vector. The most important representation theorem in the theory of Dirichlet forms is the famous Beurling-Deny decomposition, which asserts that a regular Dirichlet form $(\EE,\FF)$ on $L^2(E,m)$ may be decomposed into three parts: diffusion, jump, and killing, as follows,
 $$\EE(u,v)=\EE^{(c)}(u,v)+\int_{E\times E\setminus d} (u(x)-u(y))(v(x)-v(y))J(dxdy)+\int_E u(x)v(x)k(dx),$$
 for {$u,v\in\FF\cap C_c(E)$}.
As you see, though jump and killing parts may be characterized clearly by measures $J$ and $k$ respectively, the diffusion part $\EE^{(c)}$ is given by a form with strongly local property and its representation is not completely clear. Therefore it is surely significant to find the representation of strongly local forms in various concrete cases. However such representations have seldom appeared in literatures.
As we have seen, there are rich examples of regular Dirichlet forms in the classical textbook \cite{FOT11}, including Example 1.2.2, corresponding to {one-dimensional} diffusion with natural scale, and Hamza's result, as stated in Theorem 3.1.6. However nothing about representation 
has been mentioned
until two papers \cite {FHY10, F10}, in which the authors gave the representation for a strongly local and irreducible regular Dirichlet form on $\IR$ independently, were published. In this paper, we shall first present a complete representation for a strongly local and regular Dirichlet form on $\IR$, without assuming irreducibility. { For a long time we have paid little attention to non-irreducible case because we believed that a non-irreducible process was simply made of some irreducible parts so that the generalization is trivial. As you will see in Theorem \ref{THM21}, the generalization from irreducible case to non-irreducible is far from routine. Especially, the proof of sufficiency, the regularity of such representation, needs more new ideas and techniques.}

{ The advance in technique is a reason to start this paper, but it is not the most important reason.} The main reason that we are interested in the Dirichlet forms without irreducibility is that they appear naturally in various cases { so that the theory of symmetric {one-dimensional} diffusions would not be complete without including it.} The Hamza's result is an example { in which our new representation theorem may be used to illustrate why his result was formulated this way probabilistically, and perfectly, as shown in Theorem \ref{THM410}.} In the recent paper \cite{LY16}, we find that a regular Dirichlet extension of one-dimensional Brownian motion is associated to a {one-dimensional} diffusion, but not necessarily `regular' or irreducible.
It is proved in \cite[Theorem~3.3]{LY16} that every regular Dirichlet extension of one-dimensional Brownian motion may be essentially decomposed into at most countable disjoint invariant intervals and an exceptional set relative to this extension. 
To formulate the representation of strongly local regular Dirichlet forms on $\IR$, we have to come back to those
classical results for {one-dimensional} diffusions stated in
K. It\^o and H. Jr. McKean's classical book \cite{I06, IM74}, which {shows} us profound understanding about intuitive behaviours of general {one-dimensional} diffusion. Some of their results are essentially used in \cite{LY16} and we shall also review several important lemmas in \S\ref{SEC22}.

After the representation of strongly local regular Dirichlet forms is put forward, we shall treat a series of problems, related to the finer matters in structure, in which we are interested, for example, when the form takes $C_c^\infty(\IR)$ as a core and when the form only contains it. As we answer those problems, the Hazma's result will be more clearly and intuitively shaped.


This paper is organized as follows. In \S\ref{SEC2}, the main result (Theorem~\ref{THM21}) derives a concrete formula to express the regular and strongly local Dirichlet form on an interval. It is shown that the state space can be decomposed into at most countable disjoint intervals, on each of which the {one-dimensional} diffusion is regular and characterized by a scale function. Furthermore, any point outside these intervals is a trap of the diffusion. In other words, the trajectory starting from the point outside the intervals will stay there forever. The special case of Theorem~\ref{THM21} on $\mathbb{R}$ is given in Corollary~\ref{COR212} and a general version with killing inside of Theorem~\ref{THM21} is presented in Theorem~\ref{THM51}.


A classical method to construct a regular Dirichlet space is to start from a closable and Markovian symmetric form with the domain  $C_c^\infty$, see \cite[\S3]{FOT11}. Then the closure of $C_c^\infty$ with respect to this form is a regular Dirichlet form. In \S\ref{SEC3}, we shall explore the similar issue through a different angle, and give a necessary and sufficient condition for space of smooth functions, $C_c^\infty(I)$, to be a special standard core of \eqref{EQ2FUL}, see Theorem~\ref{THM37}. Applying Theorem~\ref{THM37} to Hamza's result (see Theorem~\ref{THM410}), we obtain a more intuitive description for structure of the form \eqref{EQ3DEC} and the associated symmetric {one-dimensional} diffusion, which is generated by the closure of $C_c^\infty({I})$, see Corollary~\ref{COR411}. Moreover, we identify the closure of $C_c^\infty({I})$ in \eqref{EQ2FUL} when $C_c^\infty({I})$ is not necessarily a special standard core of \eqref{EQ2FUL}, see Theorem~\ref{THM414}. The possible killing insides of the associated symmetric {one-dimensional} diffusions are discussed in \S\ref{SEC5}. Combining these results together, we think that the structure of strongly local regular Dirichlet forms on real line is almost completely clear.

\subsection*{Notations}

Let us put some often used notations here for handy reference, though we may restate their definitions when they appear.

For $a<b$, $\langle a, b\rangle$ is an interval where $a$ or $b$ may or may not be contained in $\langle a, b\rangle$.
Notations $dx$ and $|\cdot|$ stand for the Lebesgue measure on $\mathbb{R}$ or an interval throughout the paper if no confusion caused.
The restrictions of a measure $\mu$ and a function $f$ to $I$ are denoted by $\mu|_I$ and $f|_I$ respectively.
The notation `$:=$' is read as `to be defined as'.

For a scale function $\ss$ (i.e. a continuous and strictly increasing function) on some interval $J$, $d\ss$ represents its associated Lebesgue-Stieltjes measure on $J$. Set $\ss(J):=\{\ss(x):x\in J\}$. For two measures $\mu$ and $\nu$, $\mu\ll \nu$ means $\mu$ is absolutely continuous with respect to $\nu$. Given a scale function $\ss$ on $J$ and another function $f$ on $J$, $f\ll \ss$ means $f=g\circ \ss$ for some absolutely continuous function $g$ and $\frac{df}{d\ss}:=g'\circ \ss$. Given an interval $J$,  the classes $C_c(J), C^1_c(J)$ and $C^\infty_c(J)$ denote the spaces of all continuous functions on $J$ with compact support, all continuously differentiable functions with compact support and all infinitely differentiable functions with compact support, respectively.

Let $X=(X_t)_{t\geq 0}$ be a Markov process  associated with a Dirichlet form $(\EE,\FF)$ on $L^2(E,m)$. If $A$ is an invariant set of $X$ (see \cite[\S\ref{SEC21}]{LY16}), then the restriction of $(\EE,\FF)$ to $A$ is denoted by $(\EE^A,\FF^A)$ and the restriction of $X$ to $A$ is denoted by $X^A$. More precisely,
\[
\begin{aligned}
\FF^A:=\{f|_A: f\in \FF\},\quad \EE^A(f|_A,g|_A):=\EE(1_Af,1_Ag),\quad f,g\in \FF,
\end{aligned}
\]
which is the associated Dirichlet form on $L^2(A, m|_A)$ of $X^A$.
 If $U$ is an open subset of $E$, then the part Dirichlet form of $(\EE,\FF)$ on $U$ is denoted by $(\EE_U,\FF_U)$ and the part process of $X$ on $U$ is denoted by $X_U$.

\section{Representations of one-dimensional strongly local Dirichlet forms}\label{SEC2}

This section is devoted to characterize the Dirichlet form associated to a symmetric {one-dimensional} diffusion. We are given an interval
\[
	I=\langle l, r\rangle,
\]
where $l$ or $r$ may or may not be in $I$, and a fully supported Radon measure $m$ on $I$. Let $(\EE,\FF)$ be a regular and strongly local Dirichlet form on $L^2(I,m)$. Clearly, its associated Hunt process $X$ is a symmetric diffusion process on $I$ with no killing inside  {due to the strong locality}. The assumption of no killing inside has no essential influence on our results as we will explain later. A trap, by its name, is a point that $X$, starting from it, never leaves. Actually as illustrated later, the interior of $I$ has no trap that $X$ could reach from other points, and however the boundary could act like a trap that $X$ could reach from other points, if $I$ is open. See the examples in \S\ref{SEC231}. Hence we need to pay extra attention to the boundaries of $I$. When $X$ is irreducible (or `regular' in the terminology of \cite[(45.2)]{RW87}) in the sense that
\begin{equation}\label{EQ2PXY}
	\mathbf{P}_x(\sigma_y<\infty)>0,\quad \forall x,y \in I,
\end{equation}
where $\sigma_y$ is the hitting time of $\{y\}$ relative to $X$, its Dirichlet form $(\EE,\FF)$ is characterized by its scale function as in \cite[\S2.2.3]{CF12}, \cite{FHY10} and \cite{F14}. However, the irreducibility is not always right, and we raised many reducible examples such as the regular Dirichlet extensions of one-dimensional Brownian motion in a recent study \cite{LY16}. A natural and interesting question is how to describe this Dirichlet form without the irreducibility. This is what we shall do in this section.

\subsection{Main result}\label{SEC21}

Before presenting the main result, we need to prepare some terminologies and notations. Let $J:=\langle a, b\rangle\subset I$ be an interval, where $a$ or $b$ may or may not be in $J$. Take a fixed point $e$ in $J$ as follows
\[
e:=\left\lbrace \begin{aligned}
 &\frac{a+b}{2},\quad |a|+|b|<\infty, \\
 &a+1,\quad a>-\infty, b=\infty, \\
 &b-1,\quad a=-\infty, b<\infty, \\
 &0,\quad a=-\infty, b=\infty. 	
 \end{aligned}
 \right.
\]
Denote all scale functions on $J$ by $\SS(J)$ 
\[
	\SS(J):=\left\{\ss: J\rightarrow \mathbb{R}: \ss\text{ is continuous and strictly increasing}, \; \ss(e)=0 \right\},
\]
where we set $\ss(e)=0$ to assure the uniqueness.
When $a$ or $b$ is not in $J$, we define $\ss(a)$ or $\ss(b)$ for $\ss\in \SS(J)$ to be the limit:
\[
\ss(a):=\lim_{x\downarrow a}\ss(x)\geq -\infty,\quad \ss(b):=\lim_{x\uparrow b}\ss(x)\leq \infty.
\]
We say that a scale function $\ss\in \SS(J)$ is adapted (to $J$) if 
\begin{itemize}
\item[\textbf{(A)}] When $a>l$ or $a=l\in I$, $\ss(a)=-\infty$ if and only if $a\notin J$;
\item[\textbf{(B)}] When $b<r$ or $b=r\in I$, $\ss(b)=\infty$ if and only if $b\notin J$.
\end{itemize}
What we need is the following subset of $\SS(J)$:
\begin{equation}\label{EQ2SJS}
\SS_\infty(J):=\left\{\ss\in \SS(J): \ss \text{ is adapted}\right\}.
\end{equation}

For the Radon measure $m$ on $I$, let $m_J:=m|_J$ be its restriction to $J$. We write $m_J(\{b-\})=\infty$ (resp. $<\infty$) to stand for that for some small constant $\epsilon>0$, $m_J((b-\epsilon, b))=\infty$ (resp. $<\infty$). The notation $m_J(\{a+\})=\infty$ or $<\infty$ has the similar meaning. Note that $m_J(\{b-\})=\infty$ (resp. $(m_J(\{a+\})=\infty$) might happen only when $b=r\notin I$ (resp. $a=l\notin I$) since $m$ is Radon on $I$.

Next we introduce a Dirichlet form on $L^2(J,m_J)$ induced by a scale function $\ss\in \SS_\infty(J)$. The conditions \textbf{(A)} and \textbf{(B)} has no restriction on $\ss$ for the cases $a=l\notin I$ and $b=r\notin I$. Thus the following situations may happen:
\begin{itemize}
\item[(L0)] $a=l\notin I$, $\ss(a)>-\infty$ and $m_J(a+)<\infty$;
\item[(R0)] $b=r\notin I$, $\ss(b)<\infty$ and $m_J(b-)<\infty$,
\end{itemize}
and in either case, we need to set up the Dirichlet boundary condition.
Define a quadratic form $(\EE^{(\ss)}, \FF^{(\ss)})$ on $L^2(J,m_J)$ as follows:
\begin{equation}\label{EQ2FSU}
\begin{aligned}
&\FF^{(\ss)}:=\bigg\{u\in L^2(J,m_J): u\ll \ss, \; \frac{du}{d\ss}\in L^2(J,d\ss); \\
 &\qquad \qquad\qquad  u(a)=0\text{ (resp. }u(b)=0\text{) whenever (L0) (resp. (R0)) holds} \bigg\},  \\
& \EE^{(\ss)}(u,v):=\frac{1}{2}\int_J \frac{du}{d\ss}\frac{dv}{d\ss}d\ss,\quad u,v\in \FF^{(\ss)}.
\end{aligned}
\end{equation}
Note that if $\ss(a)>-\infty$ (resp. $\ss(b)<\infty$), then any function $u$ satisfying $u\ll \ss$ and $du/d\ss \in L^2(J,d\ss)$ admits the limit $u(a):=\lim_{x\downarrow a}u(x)$ (resp. $u(b):=\lim_{x\uparrow b}u(x)$) (Cf. \cite[\S2.2.3]{CF12}). From \cite[Theorem~2.2.11]{CF12}, it follows that $(\EE^{(\ss)}, \FF^{(\ss)})$ is a regular and strongly local Dirichlet form on $L^2(J,m_J)$ and its associated diffusion process $X^{(\ss)}$ is given by the scale function $\ss$, the speed measure $m_J$ and no killing inside. Moreover, when (L0) or (R0) happens, $a$ or $b$ is an absorbing boundary for $X^{(\ss)}$ and $X^{(\ss)}$ is also called a minimal diffusion on $J$ in \cite[Example~3.5.7]{CF12}. For more discussions about the boundary behaviours of $X^{(\ss)}$, we refer to \cite[\S2.2.3]{CF12}, \cite{F14} and \cite{LY16}.

We are now in a position to state the main result of this section.

\begin{theorem}\label{THM21}
Let $I=\langle l, r\rangle$ be an interval as above and $m$ a fully supported Radon measure on $I$. Then $(\EE, \FF)$ is a regular and strongly local Dirichlet form on $L^2(I,m)$ if and only if there exist a set of at most countable disjoint intervals $\{I_n=\langle a_n,b_n\rangle: I_n\subset I, n\geq 1\}$ and a scale function $\ss_n\in \SS_\infty(I_n)$ for each $n\geq 1$ such that
\begin{equation}\label{EQ2FUL}
\begin{aligned}
	&\FF=\left\{u\in L^2(I,m): u|_{I_n}\in \FF^{(\ss_n)}, \sum_{n\geq 1}\EE^{(\ss_n)}(u|_{I_n}, u|_{I_n})<\infty  \right\},  \\
	&\EE(u,v)=\sum_{n\geq 1}\EE^{(\ss_n)}(u|_{I_n}, v|_{I_n}),\quad u,v \in \FF,
\end{aligned}
\end{equation}
where for each $n\geq 1$, $(\EE^{(\ss_n)}, \FF^{(\ss_n)})$ is given by \eqref{EQ2FSU} with the scale function $\ss_n$ on $I_n$.  Moreover, the intervals $\{I_n: n\geq 1\}$ and scale functions $\{\ss_n:n\geq 1\}$ are uniquely determined, if the difference of order is ignored.
\end{theorem}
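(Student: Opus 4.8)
The plan is to prove the two implications of the equivalence separately, and then address uniqueness. The ``only if'' direction is the substantive one: starting from an abstract regular, strongly local Dirichlet form $(\EE,\FF)$ on $L^2(I,m)$, I must produce the intervals $\{I_n\}$ and the adapted scale functions $\{\ss_n\}$. The natural starting point is the decomposition of the state space into its irreducible (indecomposable) components. Since $X$ is a one-dimensional diffusion, I would show that each invariant set of $X$ that cannot be split further is, up to $m$-null sets, an interval $I_n$, and that the complement of $\bigcup_n I_n$ in $I$ consists of traps — i.e. points $x$ with $\mathbf{P}_x(X_t=x\ \forall t)=1$, equivalently $1_{\{x\}}\in\FF$ with $\EE(1_{\{x\}},\cdot)=0$. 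Here I would invoke the decomposition machinery of \cite[Theorem~3.3]{LY16} (phrased there for Dirichlet extensions of Brownian motion, but the argument is local and carries over) together with the classical fact that the interior of $I$ contains no reachable traps, so that trap points can only accumulate at, or coincide with, the endpoints $l,r$ when they lie outside $I$. Once the $I_n$ are in hand, on each $I_n$ the restricted form $(\EE^{I_n},\FF^{I_n})$ is regular, strongly local and \emph{irreducible} on $L^2(I_n,m|_{I_n})$, so the classical representation for irreducible one-dimensional diffusions (\cite[\S2.2.3]{CF12}, \cite{FHY10}, \cite{F14}) applies: there is a scale function $\ss_n$, unique up to the normalization $\ss_n(e_n)=0$, with $(\EE^{I_n},\FF^{I_n})=(\EE^{(\ss_n)},\FF^{(\ss_n)})$. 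The remaining point in this direction is to check that $\ss_n$ is \emph{adapted}, i.e. satisfies (A) and (B): this is where one translates ``the boundary point $a_n$ is or is not in the effective interval'' into ``$\ss_n(a_n)$ is infinite or finite'', using that an accessible boundary inside $I$ (or an endpoint of $I$ lying in $I$) cannot be a one-point invariant piece unless it is genuinely a separate trap, hence must either belong to $I_n$ (finite scale) or be excluded as a trap (which forces infinite scale from the $I_n$ side). Finally one reassembles: $\FF$ is the set of $u\in L^2(I,m)$ whose restrictions lie in each $\FF^{(\ss_n)}$ with summable energies, because the $I_n$ are invariant and mutually ``non-interacting'' ($\EE$ has no cross terms between distinct invariant sets), and the traps contribute zero energy.

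For the ``if'' direction — which the authors themselves flag as the harder and more novel part — I would start from data $\{I_n\}$, $\{\ss_n\in\SS_\infty(I_n)\}$, define $(\EE,\FF)$ by \eqref{EQ2FUL}, and verify: (i) it is a symmetric bilinear form with the Markovian property; (ii) it is closed on $L^2(I,m)$; (iii) it is strongly local; and (iv), the crux, it is \emph{regular}, i.e. $\FF\cap C_c(I)$ is dense in $\FF$ in the $\EE_1^{1/2}$-norm and uniformly dense in $C_c(I)$. Points (i)–(iii) are relatively routine: closedness follows since each $(\EE^{(\ss_n)},\FF^{(\ss_n)})$ is closed on $L^2(I_n,m|_{I_n})$ and $L^2(I,m)=\bigoplus_n L^2(I_n,m|_{I_n})\oplus L^2(\text{traps})$ with the energy acting as a (possibly infinite) direct sum; the Markov and strong-locality properties are inherited componentwise. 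The regularity is delicate precisely because the $I_n$ can be dense in $I$ and their closures can overlap at shared endpoints or pile up near $l,r$: a function that is continuous and compactly supported on $I$ must, when restricted to each $I_n$, land in $\FF^{(\ss_n)}$ (in particular have finite $\ss_n$-energy) \emph{and} the tail sum $\sum_n \EE^{(\ss_n)}$ of these energies must converge — this is exactly what the adaptedness conditions (A) and (B) are designed to guarantee. Concretely, when $a_n$ is an interior point of $I$ shared as a boundary between $I_n$ and a neighbour, (A) forces $\ss_n(a_n)=-\infty$ unless $a_n\in I_n$, which makes a continuous function's restriction automatically have controlled behaviour there; and the ``at most countable disjoint'' structure plus a careful partition-of-unity / exhaustion argument on $I$ produces enough cutoff functions in $\FF\cap C_c(I)$ to approximate any $u\in\FF$ and to separate points. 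I expect the main obstacle to be constructing, for a general $u\in\FF$, an approximating sequence in $\FF\cap C_c(I)$: one must cut off near $l$ and $r$ and near the ``bad'' accumulation points of $\partial I_n$, control the energy contributions of the cutoff on each $I_n$ simultaneously (using that the total energy $\sum_n\EE^{(\ss_n)}(u|_{I_n},u|_{I_n})$ is finite and hence its tail is small), and splice the pieces together continuously across shared endpoints where (A)/(B) guarantee matching zero boundary values.

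For uniqueness, I would argue that the decomposition of $(\EE,\FF)$ into indecomposable invariant components is intrinsic: the $I_n$ are (up to $m$-null sets, hence up to their shared endpoints, which are $m$-null or traps) precisely the maximal sets on which the restricted form is irreducible, and these are determined by $(\EE,\FF)$ alone — this follows from the general theory of reducibility of Dirichlet forms (decomposition into irreducible parts is unique). Given the $I_n$, the scale function $\ss_n$ is determined by $(\EE^{(\ss_n)},\FF^{(\ss_n)})$ via the classical uniqueness of the scale function for an irreducible one-dimensional diffusion, once the normalization $\ss_n(e_n)=0$ is imposed; the endpoint conventions for $a_n,b_n$ are forced by adaptedness. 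Hence $\{I_n\}$ and $\{\ss_n\}$ are unique modulo reordering, as claimed. A minor subtlety to record here is that two different interval systems could a priori give the same $\FF$ only by relabeling boundary points between $I$ and its complement; adaptedness together with the trap characterization rules this out, so the normalization is genuinely rigid.
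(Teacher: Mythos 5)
Your plan for the necessity direction and for uniqueness follows essentially the paper's route: decompose the state space via the It\^o--McKean classification of points (regular, shunt, trap) into at most countably many open intervals of regular points, attach reachable endpoints to form the invariant effective intervals $I_n$, apply the classical scale-function representation of \cite{FHY10} on each irreducible piece, and read off adaptedness from the reachability criterion for the endpoints. That part of the plan is sound, modulo the details you defer to \cite{LY16}.

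The genuine gap is in the sufficiency direction, and it sits exactly where you place the weight of the argument. Your mechanism for producing functions in $\FF\cap C_c(I)$ is ``a careful partition-of-unity / exhaustion argument'' with smooth cutoffs. But ordinary continuous or smooth bump functions need \emph{not} belong to $\FF$: membership in $\FF^{(\ss_n)}$ requires $u|_{I_n}\ll \ss_n$, and $\ss_n$ may be singular with respect to Lebesgue measure, in which case even a linear function restricted to $I_n$ fails to lie in $\FF^{(\ss_n)}$ (indeed, whether $C_c^\infty(I)\subset\FF$ at all is the subject of an entire later section of the paper). So the stock of cutoffs you rely on does not exist as described. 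The paper's device is to glue modified scale measures $d\tilde\ss_n\ll d\ss_n$ (with density $0$ or $1$ and controlled total variation) together with Lebesgue measure on $(\bigcup_n I_n)^c$ into a single global scale function $\ss$ on $I$, and to take $\mathcal{C}=\{\varphi\circ\ss:\varphi\in C_c^\infty\}$ as the class of admissible test functions; Stone--Weierstrass then gives uniform density. A second, related omission: a general $u\in\FF$ splits as $u_1+u_2$ with $u_2$ an arbitrary $L^2$ function supported on the trap set $(\bigcup_n I_n)^c$, which can have positive $m$-measure, and $\EE(u_2,u_2)=0$. Approximating $u_2$ by elements of $\FF\cap C_c(I)$ that simultaneously have small energy is nontrivial precisely when the $I_n$ accumulate: any continuous approximant that varies across the trap set must also vary on nearby effective intervals and thus pick up energy there. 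The paper resolves this with Cantor-type interpolating functions that are constant on each intervening $I_j$ (Lemma 3.9 and its case analysis of endpoint configurations); your plan does not identify this obstruction or propose a substitute, so as written the $\EE_1$-density argument does not close.
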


\begin{remark}
We would like to give several remarks before the proof of this theorem.
\begin{itemize}
\item[(1)] A special case of Theorem~\ref{THM21} is considered in \cite[Theorem~3.3]{LY16}, in which $(\EE,\FF)$ is a so-called regular Dirichlet extension of one-dimensional Brownian motion. It turns out that in \cite{LY16} $(\bigcup_{n\geq 1}I_n)^c$ has to be of zero Lebesgue measure ($m$ is the Lebesgue measure in this case) and an additional condition on the scale function is imposed. For a general symmetric {one-dimensional} diffusion, these restrictions are not necessarily required.
\item[(2)] Though the intervals are disjoint, they may have common endpoints. For example, $I_m=(a_m,b_m]$ and $I_n=(a_n,b_n)$ with $b_m=a_n$.
\item[(3)] The set of intervals $\{I_n: n\geq 1\}$ is allowed to be empty. Particularly, $\FF=L^2(I,m)$ and $\EE(u,v)=0$ for any $u,v\in \FF$. Then $X$ is a trivial diffusion process and every point of $I$ is a trap of $X$.
\end{itemize}
\end{remark}

\subsection{Proof of Theorem~\ref{THM21}}\label{SEC22}
Recall that two $m$-symmetric Hunt processes are \emph{equivalent} if they possess a common properly exceptional set outside which their transition functions coincide (Cf. \cite[Theorem~4.2.8]{FOT11}). Any regular and strongly local Dirichlet form on $L^2(I,m)$ is associated with a class of equivalent $m$-symmetric diffusions, in which there is a representative $X$ satisfying
\begin{equation}\label{EQ2PXX}
	\mathbf{P}_x(X_{\zeta-}\in I, \zeta<\infty)=0
\end{equation}
for every $x\in I$, where $\zeta$ is the lifetime of $X$. This property is usually called `no killing inside'.

We shall use the same notations as in \cite[\S3.2]{LY16} to classify the state space $I$ of the {one-dimensional} diffusion $X$ as follows (Cf. also \cite{IM74}): 
\begin{enumerate}
\item $\Lambda_2$: regular points, which could reach both sides,
\item $\Lambda_r\cup \Lambda_l$: singular points, which could reach at most one side,
\item $\Lambda_l$/$\Lambda_r$: left/right singular points, which could not reach right/left side,
\item $\Lambda_{pl}$/$\Lambda_{pr}$: left/right shunt points, which could reach only left/right side,
\item $\Lambda_t$: traps, the points which could not reach either side.
\end{enumerate}
Keep in mind the properties of these classes are presented in \cite[Lemma~3.7]{LY16}. Especially, we can deduce the following lemma under the symmetry as in \cite[Lemmas~3.8 and 3.9]{LY16}.

\begin{lemma}\label{LM22}
\begin{itemize}
\item[(1)] Fix a right (resp. left) singular point $b\in \Lambda_{r}$ (resp. $\Lambda_{l}$). Under the symmetry, for any $a<b$ (resp. $a>b$), it holds that
\[
	\mathbf{P}_a(\sigma_b<\infty)=0,
\]
where $\sigma_b$ is hitting time of $\{b\}$ relative to the diffusion process.
Particularly, under the symmetry if $b\in \Lambda_t$ then for any $a\neq b$,
\[
	\mathbf{P}_a(\sigma_b<\infty)=0.
\]
\item[(2)] Under the symmetry, if $(a,b)\subset \Lambda_r$ (resp. $\Lambda_l$), then $(a,b)\subset \Lambda_t$.
\end{itemize}

\end{lemma}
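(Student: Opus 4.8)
I would begin with part~(2), which reduces cleanly to part~(1). Assume $(a,b)\subset\Lambda_r$ and fix $x\in(a,b)$. For any $y$ with $x<y<b$ we have $y\in\Lambda_r$, hence on one hand $\mathbf{P}_y(\sigma_x<\infty)=0$ because $y$ cannot reach its left side, and on the other hand $\mathbf{P}_x(\sigma_y<\infty)=0$ by part~(1) applied to the right singular point $y$ and the point $x<y$. Since $X$ has continuous sample paths, in order to reach any point $\geq b$ from $x$ the process must first pass through the interval $(x,b)$; as it can reach no such point, $x$ cannot reach its right side either, so $x\in\Lambda_t$. Letting $x$ range over $(a,b)$ gives $(a,b)\subset\Lambda_t$. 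The ``particularly'' clause of part~(1) follows from part~(1) itself together with the observation that a trap reaches neither side, hence lies in $\Lambda_l\cap\Lambda_r$: apply the right singular statement for $a<b$ and the left singular statement for $a>b$.

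For part~(1) the argument is that of \cite[Lemmas~3.8 and~3.9]{LY16}, the essential new input being the $m$-symmetry, which enters through the symmetry of the resolvent density. Let $b\in\Lambda_r$. If $\{b\}$ is polar (equivalently, $b$ lies in the exceptional set relative to $(\EE,\FF)$) then $\mathbf{P}_a(\sigma_b<\infty)=0$ for trivial reasons, so we may assume $b$ is non-polar. Then, by the classical one-dimensional identity (Cf. \cite{IM74}, \cite{CF12}), for the $\alpha$-order resolvent density $r_\alpha(\cdot,\cdot)$ of $X$ with respect to $m$ one has $\mathbf{E}_x\big[e^{-\alpha\sigma_b}\big]=r_\alpha(x,b)/r_\alpha(b,b)$ for every non-polar $x$ and every $\alpha>0$, and $m$-symmetry gives $r_\alpha(x,b)=r_\alpha(b,x)$. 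Consequently, for $a<b$,
\[
\mathbf{P}_a(\sigma_b<\infty)>0\iff r_\alpha(a,b)>0\iff r_\alpha(b,a)>0\iff \mathbf{P}_b(\sigma_a<\infty)>0 .
\]
But $b\in\Lambda_r$ forbids $\mathbf{P}_b(\sigma_a<\infty)>0$ for any $a<b$, so $\mathbf{P}_a(\sigma_b<\infty)=0$ for all $a<b$. (One should also invoke \cite[Lemma~3.7]{LY16} for the basic properties of the classes $\Lambda_2,\Lambda_l,\Lambda_r,\Lambda_{pl},\Lambda_{pr},\Lambda_t$, e.g.\ that $b\in\Lambda_r$ together with $b$ non-polar places $b$ at the edge of a component of $\Lambda_2$ on its right, so that $r_\alpha(b,b)\in(0,\infty)$.) The left singular case is symmetric.

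The step that requires genuine care---and the one I expect to be the main obstacle---is making rigorous the equivalence between the hitting statement $\mathbf{P}_x(\sigma_b<\infty)>0$ and the positivity of the Green density $r_\alpha(x,b)$ when $b$ is a ``boundary-type'' point of the regular part, i.e.\ handling exactly the delicate situations (polarity of $\{b\}$, $b$ sitting on the endpoint of a component of $\Lambda_2$, or $b$ being a shunt) where the naive reversibility heuristic needs justification. This is precisely the technical content of \cite[Lemma~3.9]{LY16}, carried out via the boundary classification of \cite{IM74}; one transcribes that argument, replacing its symmetry hypothesis by the $m$-symmetry in force here. The remaining ingredients---the strong Markov property and the continuity of sample paths---are routine.
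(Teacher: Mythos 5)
The paper gives no proof of this lemma at all: it is quoted wholesale from \cite{LY16} (Lemmas~3.8 and 3.9 there), so your attempt has to be judged on its own merits. Your reduction of part~(2) to part~(1) and your derivation of the ``particularly'' clause from $\Lambda_t=\Lambda_l\cap\Lambda_r$ are correct (modulo the slightly garbled sentence about ``reaching any point $\geq b$''; the relevant point is that reaching the right side of $x$ forces hitting some $y\in(x,b)$, which part~(1) forbids).

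The genuine gap is in your proof of part~(1). The chain of equivalences rests on a pointwise-defined symmetric resolvent density $r_\alpha(x,y)$ with respect to $m$ and on the ratio formula $\mathbf{E}_x[e^{-\alpha\sigma_b}]=r_\alpha(x,b)/r_\alpha(b,b)$. Both are facts about \emph{regular} (irreducible) one-dimensional diffusions. Here $X$ is an arbitrary symmetric diffusion on $I$, possibly riddled with traps, and at a trap $x$ with $m(\{x\})=0$ the resolvent $R_\alpha(x,\cdot)=\alpha^{-1}\delta_x$ has no density with respect to $m$ whatsoever. You cannot appeal to the decomposition into effective intervals to restore the density, because Lemma~\ref{LM22} is an ingredient of the proof of that decomposition (Theorem~\ref{THM21}); the appeal would be circular. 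Your fallback ``if $\{b\}$ is polar the claim is trivial'' is also not quite enough: polarity yields $\mathbf{P}_a(\sigma_b<\infty)=0$ only for quasi-every $a$, whereas the lemma asserts it for \emph{every} $a<b$.

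The argument that actually works (the one in \cite{LY16}, and the same device the paper uses in Remark~\ref{RM25}) applies the semigroup symmetry directly. From $b\in\Lambda_r$, the strong Markov property and path continuity give $\mathbf{P}_y(X_t\geq b,\ \forall t<\zeta)=1$ for every $y\in B:=I\cap[b,\infty)$: a path from $y>b$ must hit $b$ before going below it, and from $b$ it never goes below $b$. Hence $P_t1_A\equiv 0$ on $B$ with $A:=I\cap(-\infty,b)$, and $(1_A,P_t1_B)_m=(P_t1_A,1_B)_m=0$ forces $\mathbf{P}_x(X_t\geq b)=0$ for $m$-a.e.\ $x\in A$ and all $t$; since from $b$ the process either stays at $b$ or immediately exceeds $b$, this upgrades to $\mathbf{P}_x(\sigma_b<\infty)=0$ for $m$-a.e.\ $x<b$. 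Finally, if $\mathbf{P}_a(\sigma_b<\infty)>0$ for some $a<b$, then $\sigma_y\leq\sigma_b$ for every $y\in(a,b)$ by the intermediate value theorem, so the strong Markov property gives $\mathbf{P}_y(\sigma_b<\infty)>0$ for every $y\in(a,b)$, contradicting the a.e.\ statement because $m$ is fully supported. This last step is exactly the everywhere-versus-a.e.\ upgrade your sketch is missing.
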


\begin{remark}
Although the discussions in \cite{LY16} are made on the one-dimensional Euclidean space $\mathbb{R}$, all the results in \cite[\S3.3.1-3.3.2]{LY16} still hold for the diffusion on $I$ with no killing inside. For the classification of the points, every point $x$ in the interior of $I$ (i.e. $x\in (l,r)$) can be classified as \cite[Definition~3.6]{LY16}. For the possible $x=r\in I$, formally we say $x$ is a left singular point, i.e. $x\in \Lambda_l$, and we have
\[
	e^-:=\mathbf{P}_x(\sigma_{x-}=0)=0\text{ or }1,
\]
where $\sigma_{x-}:=\inf\{t>0:X_t<x\}$, by the Blumenthal 0-1 law. We say $x$ is a left shunt point, i.e. $x\in \Lambda_{pl}$ if $e^-=1$ and a trap, i.e. $x\in \Lambda_t$ if $e^-=0$. Particularly, if $x=r$ is a trap, then
\[
	\mathbf{P}_x(X_t=x,\forall t\geq 0)=1	
\]
due to `no killing inside'.
Similarly, we can classify the possible $x=l\in I$ as a right shunt point or a trap.
\end{remark}

\subsubsection{Proof of necessity}\label{SEC221}

Let $(\EE,\FF)$ be a regular and strongly local Dirichlet form on $L^2(I,m)$ and $X$ a representative of the associated $m$-symmetric diffusions satisfying \eqref{EQ2PXX} for every $x\in I$.

First note that the regular set $\Lambda_2$ is open (Cf. \cite[Lemma~3.7~(6)]{LY16}). Thus we may write $\Lambda_2$ as a union of disjoint open intervals:
\begin{equation}\label{EQ2LNA}
	\Lambda_2=\bigcup_{n\geq 1} (a_n,b_n).
\end{equation}
Let $$F:=\Lambda_2^c=\Lambda_l\cup \Lambda_r,$$ which is closed. Further let ${\mathring{F}}$ be the interior of $F$ and $\partial F: =F\setminus {\mathring{F}}$. Clearly, $\{a_n,b_n: n\geq 1\}\cap I\subset \partial F$, $\partial F$ is nowhere dense and ${\mathring{F}}$ is open. The proof of necessity will be completed
in four steps.
\medskip

\noindent\textbf{Step 1:} Every point in $\mathring{F}$ is a trap.

\begin{lemma}\label{LM24}
${\mathring{F}}\subset \Lambda_t$.
\end{lemma}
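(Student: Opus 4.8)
The plan is to show that every point in the interior $\mathring{F}$ of $F = \Lambda_l \cup \Lambda_r$ is a trap of $X$. The starting observation is that $\mathring{F}$ is open, so we may write it as a countable union of disjoint open intervals; fix one such interval $(c,d) \subset \mathring{F} \subset F = \Lambda_l \cup \Lambda_r$. The key structural input is Lemma~\ref{LM22}(2): if an open interval is entirely contained in $\Lambda_r$ (resp.\ $\Lambda_l$), then it is in fact contained in $\Lambda_t$. So the real work is to argue that an open subinterval of $\mathring{F}$ cannot meet both $\Lambda_l$ and $\Lambda_r$ in an essential way, i.e.\ that on a (possibly smaller) open subinterval the points are either all left-singular or all right-singular, after which Lemma~\ref{LM22}(2) finishes the job.

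First I would recall from \cite[Lemma~3.7]{LY16} the basic topological/monotonicity properties of the classes $\Lambda_l, \Lambda_r, \Lambda_2$: roughly, the ability to reach the right side is monotone as one moves left, and symmetrically, so that $\Lambda_r$ is `closed to the left' and $\Lambda_l$ is `closed to the right' in an appropriate sense. Concretely, if $x \in \Lambda_r$ (cannot reach the right) then every point to the left of $x$ that lies in $F$ should also fail to reach the right — otherwise such a point could reach a point $>x$, hence reach $x$, contradicting that from $x$ one could then continue; one has to be a little careful, but the upshot is that within a connected component $(c,d)$ of $\mathring F$ the set $\Lambda_r \cap (c,d)$ is an interval of the form $(c, p)$ or $(c,p]$ and $\Lambda_l \cap (c,d)$ is an interval $(q,d)$ or $[q,d)$, with $p \le q$. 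If there were a genuine gap $p < q$, the points strictly between would be in neither $\Lambda_l$ nor $\Lambda_r$, hence in $\Lambda_2$, contradicting $(c,d) \subset F = \Lambda_2^c$; so $p = q$ and $(c,d) \setminus \{p\}$ splits into an open left piece inside $\Lambda_r$ and an open right piece inside $\Lambda_l$.

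Now apply Lemma~\ref{LM22}(2) to each of these two open pieces: $(c,p) \subset \Lambda_r \Rightarrow (c,p) \subset \Lambda_t$, and $(p,d) \subset \Lambda_l \Rightarrow (p,d) \subset \Lambda_t$. For the single remaining point $p$: by Lemma~\ref{LM22}(1), since $p$ is surrounded on both sides by traps (which are in particular singular points), no point $a \ne p$ can hit $p$; and from $p$ itself, reaching the right would mean hitting some point in $(p,d) \subset \Lambda_t$, impossible by the trap property together with Lemma~\ref{LM22}(1) applied from $p$, and symmetrically $p$ cannot reach the left. Hence $p \in \Lambda_t$ as well, so the whole component $(c,d) \subset \Lambda_t$. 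Taking the union over all components of $\mathring F$ gives $\mathring F \subset \Lambda_t$.

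The main obstacle I anticipate is making the monotonicity argument in the second paragraph fully rigorous — precisely, showing that within a component of $\mathring F$ the left-singular and right-singular points cannot interlace, and handling the endpoint point $p$ where the two regimes meet. This is where the symmetry hypothesis is essential (it is what powers Lemma~\ref{LM22}), and one must invoke the correct items of \cite[Lemma~3.7]{LY16} about how reachability of a side propagates monotonically along the line; the probabilistic content (a diffusion with continuous paths that reaches a far point must pass through all intermediate points) does the rest. The $L^2$/Dirichlet-form structure plays no direct role here — this step is purely about the path behaviour of $X$ — so I expect the proof to be short once the classification lemmas are quoted correctly.
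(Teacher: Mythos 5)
There is a genuine gap, and it sits exactly where you anticipated: the monotonicity claim in your second paragraph. First, a notational slip: in the paper's convention $\Lambda_l$ consists of the points that cannot reach the \emph{right} side and $\Lambda_r$ of those that cannot reach the \emph{left} side; you have them swapped, but that is cosmetic. The substantive problem is twofold. (i) These classes are defined by the local exit behaviour $e^{\pm}=\mathbf{P}_x(\sigma_{x\pm}=0)$ at each point; ``$x$ cannot reach its right side'' means $x$ reaches no point $>x$, and this does \emph{not} propagate to points $y<x$: such a $y$ may well reach points of $(y,x)$ without ever passing $x$, so your inference ``otherwise such a point could reach a point $>x$'' begs the question. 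There is no off-the-shelf statement in \cite[Lemma~3.7]{LY16} saying that $\Lambda_r\cap(c,d)$ is an initial segment of the component. (ii) Even a posteriori the picture ``initial segment in $\Lambda_r$, final segment in $\Lambda_l$, meeting at a single point $p=q$'' is inconsistent: $\Lambda_l\cap\Lambda_r=\Lambda_t$, so once the lemma is proved \emph{both} sets equal all of $(c,d)$ (giving $p=d>c=q$), not two pieces glued at a point. You are implicitly treating $\Lambda_l$ and $\Lambda_r$ as disjoint, i.e.\ conflating them with the shunt classes $\Lambda_{pl},\Lambda_{pr}$; the overlap $\Lambda_t$ is precisely what you are trying to produce, so the decomposition cannot be assumed.

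The paper's argument is local rather than global, and this is the missing idea. Fix $x\in(c,d)\subset\mathring F$ and suppose $x\in\Lambda_{pr}$. Every point of $(x,d)$ lies in $F=\Lambda_{pl}\cup\Lambda_r$, so either (a) a sequence of points of $\Lambda_{pl}$ decreases to $x$, in which case \cite[Lemma~3.7~(5)]{LY16} forces $x\in\Lambda_l$, contradicting $x\in\Lambda_{pr}$; or (b) some interval $(x,x+\epsilon)$ is contained in $\Lambda_r$, whence $(x,x+\epsilon)\subset\Lambda_t$ by Lemma~\ref{LM22}~(2), and then Lemma~\ref{LM22}~(1) says no point of $(x,x+\epsilon)$ is reachable from $x$ --- contradicting $x\in\Lambda_{pr}$, which (by \cite[Lemma~3.7~(4)]{LY16} and path continuity) yields $\mathbf{P}_x(\sigma_y<\infty)>0$ for some $y\in(x,x+\epsilon)$. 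The symmetric argument rules out $x\in\Lambda_{pl}$, so $x\in\Lambda_t$. You correctly identified Lemma~\ref{LM22}~(2) as the engine and your treatment of the isolated point $p$ via Lemma~\ref{LM22}~(1) is fine in itself, but the one-sided-neighbourhood-or-accumulation dichotomy at each individual point is what actually produces the interval to which Lemma~\ref{LM22}~(2) applies; the global interval structure you posited does not.
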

\begin{proof}
Take $x\in (c,d)\subset {\mathring{F}}$. Suppose that $x\in \Lambda_{pr}$. Then there exists a small constant $0<\epsilon<|d-x|$ such that $(x,x+\epsilon)\subset \Lambda_r$. In fact, if this assertion is not right, then there is a decreasing sequence $\{x_k: k\geq 1\}\subset \Lambda_{pl}$ in $(x,d)$ satisfying $x_k\downarrow x$ as $k\rightarrow \infty$. By \cite[Lemma~3.7~(5)]{LY16}, we have $x\in \Lambda_l$, which contradicts the fact $x\in \Lambda_{pr}$.

Since $(x,x+\epsilon)\subset \Lambda_r$, it follows from Lemma~\ref{LM22}~(2) that $(x,x+\epsilon)\subset \Lambda_t$. But $x\in \Lambda_{pr}$ implies that for some $y>x$ (Cf. \cite[Lemma~3.7~(4)]{LY16})
\[
	\mathbf{P}_x(\sigma_y<\infty)>0.
\]
Clearly, $y$ can be taken in $(x,x+\epsilon)$. This contradicts Lemma~\ref{LM22}~(1). Therefore, we have $x\notin \Lambda_{pr}$. Similarly we can also deduce that $x\notin \Lambda_{pl}$. Finally we conclude $x\in \Lambda_t$. That completes the proof.
\end{proof}

\noindent\textbf{Step 2:} We treat the points in $\partial F$.

Since ${\mathring{F}}$ is open, we also write it as a union of disjoint open intervals:
\begin{equation}\label{EQ2FMC}
\mathring{F}=\bigcup_{m\geq 1} (c_m,d_m).
\end{equation}
Then $\partial F$ may be divided into three families: $\{a_n,b_n: n\geq 1\}\cap I$, $\{c_m,d_m:m\geq 1\}\cap \partial F$ and others. We denote them by $\partial_1 F$, $\partial_2 F$ and $\partial_3 F$ respectively.

Consider a point $b_n\in \partial_1 F$ for some $n$. There are the following possible situations:
\begin{itemize}
\item[(1)] $b_n \in \Lambda_t$. Notice the first assertion in Lemma~\ref{LM22}.
\item[(2)] $b_n \in \Lambda_{pl}$. This implies that $\mathbf{P}_{b_n}(\sigma_y<\infty)>0$ for any $y\in (a_n,b_n)$ (Cf. Lemma~3.7~(4) of \cite{LY16}), but $\mathbf{P}_{b_n}(\sigma_z<\infty)=\mathbf{P}_z(\sigma_{b_n}<\infty)=0$ for any $z>b_n$ (Cf. \cite[Lemma~3.7~(2)]{LY16} and Lemma~\ref{LM22}~(1)). Hence if $\mathbf{P}_y(\sigma_{b_n}<\infty)>0$ for some (equivalently, all) $y\in (a_n,b_n)$, then we have
\begin{equation}\label{EQ2PYB}
	\mathbf{P}_y(\sigma_{b_n}<\infty)\cdot \mathbf{P}_{b_n}(\sigma_y<\infty)>0, \quad \forall y\in (a_n,b_n).
\end{equation}
Otherwise, $m(\{b_n\})=0$ and $\{b_n\}$ is an $m$-polar set relative to $X$ by Remark~\ref{RM25} below.
\item[(3)] $b_n\in \Lambda_{pr}$. Clearly, for any $y\in (a_n,b_n)$, $\mathbf{P}_y(\sigma_{b_n}<\infty)=\mathbf{P}_{b_n}(\sigma_y<\infty)=0$. Moreover, by Remark~\ref{RM26} below, we know that $b_n=a_j$ for some $j$. In addition, similar to Remark~\ref{RM25}, either it holds
\begin{equation}\label{EQ2PYA}
	\mathbf{P}_y(\sigma_{a_j}<\infty)\cdot \mathbf{P}_{a_j}(\sigma_y<\infty)>0, \quad \forall y\in (a_j,b_j)
\end{equation}
or $\{b_n\}$ is an $m$-polar set relative to $X$.
\end{itemize}
The point $a_n\in \partial F_1$ can be treated similarly as above.

\begin{remark}\label{RM25}
Let us prove the following fact: if $b_n\in \Lambda_{pl}$ and $\mathbf{P}_y(\sigma_{b_n}<\infty)=0$ for all $y\in (a_n,b_n)$, then $m(\{b_n\})=0$ and thus $\{b_n\}$ is $m$-polar relative to $X$. Suppose that $m(\{b_n\})>0$. Recall that the probability transition semigroup $(P_t)_{t\geq 1}$ satisfies the following symmetric property:
\begin{equation}\label{EQ2PTF}
	(P_tf,g)_m=(f,P_tg)_m,\quad f,g\in \mathcal{B}_+(I), t\geq 0,
\end{equation}
where $(f,g)_m$ stands for the inner product of $L^2(I,m)$ and $\mathcal{B}_+(I)$ is the set of all non-negative Borel measurable functions on $I$. Take $f(x):=1_{\{b_n\}}(x)$ and
\[
	g(x):=\left\lbrace \begin{aligned}
	&-x+b_n,\quad x\leq b_n,\\
	&0,\quad x>b_n.
	\end{aligned} \right.
\]
Since for any $x\neq b_n$, $$P_tf(x)=\mathbf{P}_x(X_t=b_n)=0$$ and $g(b_n)=0$, the left side of \eqref{EQ2PTF} equals $0$. It follows from $m(\{b_n\})>0$ that $P_tg(b_n)=0$, which implies $\mathbf{P}_{b_n}(X_t<b_n)=0$ for any $t\geq 0$. It follows that
\[
	\mathbf{P}_{b_n}\left(\bigcup_{t\in \mathbb{Q}_+}\left\{X_t<b_n \right\} \right)=0
\]
and
\[
	1=\mathbf{P}_{b_n}\left(\bigcap_{t\in \mathbb{Q}_+}\left\{X_t\geq b_n \right\} \right)=\mathbf{P}_{b_n}\left(\bigcap_{t\geq 0}\left\{X_t\geq b_n \right\} \right).
\]
This indicates $b_n\in \Lambda_r$, which contradicts the fact $b_n\in \Lambda_{pl}$.
\end{remark}

\begin{remark}\label{RM26}
For the case $b_n\in \Lambda_{pr}$, we show that $b_n=a_j$ for some $j$. If not, then either $b_n=c_k$ (see \eqref{EQ2FMC}) for some $k$ or  a subsequence $\{(a_{j'},b_{j'})\}$ of \eqref{EQ2LNA} decreases to $b_n$. Both of them are impossible. For the former one, it follows from Lemma~\ref{LM24} that $(c_k,d_k)\subset \Lambda_t$. Then $\mathbf{P}_{b_n}(\sigma_y<\infty)=0$ for any $y>b_n$ by Lemma~\ref{LM22}~(1). This contradicts that $b_n\in \Lambda_{pr}$. The latter one also contradicts that $b_n\in \Lambda_{pr}$ since it implies that $\mathbf{P}_{b_n}(\sigma_y<\infty)>0$ for some $y>b_n$.
\end{remark}

Next, consider $d_m\in \partial_2 F$ (the case $c_m\in \partial_2 F$ is similar). Clearly $d_m\in \Lambda_r$ since $(c_m,d_m)\subset \Lambda_t$. Moreover, either $d_m=a_j$ for some $j$ or a subsequence $\{(a_{j'},b_{j'})\}$ of \eqref{EQ2LNA} decreases to $d_m$. Similar to the third case of $b_n$ above, we can deduce that either of the followings may happen:
\begin{itemize}
\item[(1)] $d_m\in \Lambda_t$.
\item[(2)] $d_m\in \Lambda_{pr}$. Then it must hold that $d_m=a_j$ for some $j$. In addition, either it holds
\[
	\mathbf{P}_y(\sigma_{a_j}<\infty)\cdot \mathbf{P}_{a_j}(\sigma_y<\infty)>0, \quad \forall y\in (a_j,b_j)
\]
or $\{d_m\}$ is $m$-polar relative to $X$.
\end{itemize}

Finally, we assert that $\partial_3 F\subset \Lambda_t$. In fact, for any $x\in \partial_3  F$, there exist two subsequences of $\{(a_n,b_n): n\geq 1\} \cup \{(c_m,d_m):m\geq 1\}$ such that one decreases to $x$ and the other increases to $x$. This excludes the possibility of $x\in \Lambda_{pr} \cup \Lambda_{pl}$. Therefore $\partial F_3\subset \Lambda_t$.

In a word, except for an $m$-polar set, the point in $F$ either belongs to $\Lambda_t$, or is the endpoint of some $(a_n,b_n)$ with the property \eqref{EQ2PYB} or \eqref{EQ2PYA}. Furthermore, the excluded $m$-polar set is contained in $\{a_n,b_n:n\geq 1\}\cap I$.
\medskip

\noindent\textbf{Step 3: } Irreducible intervals and the diffusions on them.

Add the endpoints of every $(a_n,b_n)$ with the property \eqref{EQ2PYB} or \eqref{EQ2PYA} into $(a_n,b_n)$ and we obtain a new interval $I_n:=\langle a_n,b_n\rangle$. First note that $I_n\cap I_m=\emptyset$ for $n\neq m$. In fact, $I_n\cap I_m\neq \emptyset$ only occurs when $a_n=b_m$ or $a_m=b_n$. This concludes that either $a_n=b_m\in \Lambda_2$ or $a_m=b_n\in \Lambda_2$, which contradicts that \eqref{EQ2LNA}. We shall characterize the restriction of $X$ to $I_n$.

Fix $n$ and let us consider the interval $I_n$. From the second step, we know that
\[
\begin{aligned}
	&\mathbf{P}_x(\sigma_y<\infty)=0,\quad \forall x\in I_n, y\in I\setminus I_n,	\\
	&\mathbf{P}_y(\sigma_x<\infty)=0,\quad \forall x\in I_n, \text{ q.e. } y\in I\setminus I_n,
\end{aligned}	
\]
where the excluded possible $m$-polar set is contained in $\{a_n,b_n\}$. Particularly, $I_n$ is an invariant set of $X$ (Cf. \cite[\S3.1]{LY16}). Thus the restriction $X^{I_n}$ of $X$ to $I_n$ is still a diffusion process on $I_n$ and symmetric with respect to $m_n:=m|_{I_n}$ with no killing inside.  Furthermore, $X^{I_n}$ is irreducible in the sense that
\[
	\mathbf{P}_x(\sigma_y<\infty)>0,\quad \forall x,y \in I_n.
\]
It follows that $X^{I_n}$ is characterized uniquely by a scale function $\ss_n\in \SS(I_n)$ since the speed measure $m_n$ is fixed (Cf. \cite[\S3]{FHY10}). We assert that $\ss_n\in \SS_\infty(I_n)$. For the condition \textbf{(A)}, if $a_n>l$, then $a_n\notin I_n$ implies  $\mathbf{P}_x(\sigma_{a_n}<\infty)=0$ for any $x\in I_n$, which is equivalent to (Cf. \cite[(3.5.13)]{CF12})
\[
	\int_{a_n}^c m_n((x,c))d\ss_n(x)=\infty
\]
for $c\in (a_n,b_n)$. Note that $m_n((a_n,c))<\infty$. Thus $\ss_n(a_n)=-\infty$. Similarly we can also prove that $a_n\in I_n$ implies $\ss_n(a_n)>-\infty$. For the case $a_n=l\in I$, we need only to note that $$0<m_n((a_n,c))\leq m([l, c])<\infty.$$ The condition \textbf{(B)} can be verified similarly.
Therefore, it follows from the discussions above and \cite[Theorem~3.1]{FHY10} (see also \cite[\S2.2.3 and Theorem~3.5.8]{CF12} that the associated Dirichlet form of $X^{I_n}$ is given by \eqref{EQ2FSU} with $\ss_n, I_n$ instead of $\ss, J$ respectively.

\medskip
\noindent
\textbf{Step 4: } The expression \eqref{EQ2FUL} of $(\EE,\FF)$ can be deduced by \cite[Corollary~3.11]{LY16} and the following fact: $P_tf(x)=f(x)$ for $x\in \Lambda_t$. That completes the proof of necessity.

\subsubsection{Proof of sufficiency}

Let $(\EE,\FF)$ be given by \eqref{EQ2FUL}. Similar to \cite[\S3.2.5]{LY16}, we can deduce that $(\EE,\FF)$ is a Dirichlet form in the wide sense on $L^2(I,m)$ by \cite[Theorem~3.10]{LY16}. It suffices to prove the regularity.
We shall do it in two steps.
\medskip

\noindent\textbf{First step:} To prove that $\FF\cap C_c(I)$ is dense in $C_c(I)$ relative to the uniform norm.

For each $n$, take another scale function $\tilde{\ss}_n\in \SS(I_n)$ such that
\begin{align*}
	&d\tilde{\ss}_n\ll d\ss_n, \\
&\frac{d\tilde{\ss}_n}{d\ss_n}=0\text{ or }1, \; d\ss_n\text{-a.e.},\\
&|\tilde{\ss}_n(b_n)-\tilde{\ss}_n(a_n)|\leq |b_n-a_n|.
\end{align*}
For the existence of $\tilde{\ss}_n$, we refer to \cite[Remark~3.1]{LY16}. Define a measure $\lambda$ on $I$ as follows:
\[
	\lambda:=\sum_{n\geq 1}d\tilde{\ss}_n+dx|_{\left(\cup_{n\geq 1} I_n\right)^c}.
\]
One may easily check that $\lambda$ is a Radon measure on $I$ and we denote its associated increasing function by $\ss$, i.e. $\lambda=d\ss$. Without loss of generality, we impose $\ss(e)=0$ where $e$ is a fixed point in $I$. We assert that $\ss\in \SS(I)$.  In fact, since $\lambda(\{x\})=0$ for any $x\in I$, $\ss$ is continuous. On the other hand, $\lambda((c,d))>0$ for any $c,d\in I$ with $c<d$ by $\tilde{\ss}_n\in \SS(I_n)$. This implies that $\ss$ is strictly increasing.

Consider now the following class of continuous functions:
\[
	\mathcal{C}:=\{\varphi\circ \ss: \varphi\in C_c^\infty(J)\},
\]
where $J:=\ss(I)=\langle \ss(a),\ss(b)\rangle$ and $\ss(a)\in J$ (resp. $\ss(b)\in J$) if and only if $a\in I$ (resp. $b\in I$). Since $\ss$ is strictly increasing, it follows from Stone-Weierstrass theorem that $\mathcal{C}$ is dense in $C_c(I)$ relative to the uniform norm. We need only to prove $\mathcal{C}\subset \FF\cap C_c(I)$. Take a function $u=\varphi\circ \ss$. Clearly, $u\in L^2(I,m)$. For each $n$, $u|_{I_n}=\varphi\circ \ss|_{I_n}\ll\ss_n$ since $d\ss|_{I_n}=d\tilde{\ss}_n\ll d\ss_n$. Moreover,
\[
\begin{aligned}
	\sum_{n\geq 1} \int_{I_n}\left(\frac{du|_{I_n}}{d\ss_n}\right)^2d\ss_n&=\sum_{n\geq 1} \int_{I_n}\left(\frac{du|_{I_n}}{d\tilde{\ss}_n}\right)^2d\tilde{\ss}_n \\
	&=\sum_{n\geq 1} \int_{I_n}\left(\varphi'\circ\ss\right)^2d\tilde{\ss}_n
\\&\leq \int_I (\varphi'\circ \ss)^2d\ss\\&<\infty.
\end{aligned}
\]
For the possible situations (L0) and (R0), clearly we have $u(a)=0$ and $u(b)=0$ respectively. Hence we conclude that $u\in \FF$.
\medskip

\noindent\textbf{Second step:} To prove that $\FF\cap C_c(I)$ is dense in $\FF$ relative to the norm $\|\cdot\|_{\EE_1}$.

Take $u\in \FF$ and set $$u_1:= u\cdot 1_{\cup_{n\geq 1}I_n},\ u_2:=u-u_1.$$ One may see $u_1,u_2\in \FF$ and $\EE(u_2,u_2)=0$. Fix a small constant $\epsilon>0$. What we shall do is to find a function $u_\epsilon \in \FF\cap C_c(I)$ such that $\EE_1(u_\epsilon-u,u_\epsilon-u)<C\cdot\epsilon$ for some constant $C$.

We first treat $u_1$. Since $(\EE^{(\ss_n)},\FF^{(\ss_n)})$ is regular on $L^2(I_n,m_n)$ and $\EE_1(u_1,u_1)<\infty$, we can take an integer $N$ large enough and for each $1\leq n\leq N$ a function $g_n\in \FF^{(\ss_n)}\cap C_c(I_n)$ such that
\begin{align*}
	&\sum_{n> N}\EE_1^{(\ss_n)}(u|_{I_n}, u|_{I_n}) <\epsilon,\quad\\ &\EE^{(\ss_n)}_1(u|_{I_n}-g_n,u|_{I_n}-g_n)<\epsilon/N,\; 1\leq n\leq N.
\end{align*}
Define a function $g\in \FF$ as follows: $g|_{I_n}:=u|_{I_n}$ for $1\leq n\leq N$ and $g:=0$ elsewhere. Then $g$ is continuous on $I_n$ for $1\leq n\leq N$ and
\[
	\EE_1(u_1-g,u_1-g)<2\epsilon.
\]
Mimicking the proof of \cite[Lemma~3.14]{LY16} and referring to Remark~\ref{RM27} below, we can construct a function $g_\epsilon \in \FF\cap C_c(I)$ such that $\EE_1(g_\epsilon-g,g_\epsilon-g)<\epsilon$. This implies
\[
	\EE_1(g_\epsilon-u_1,g_\epsilon-u_1)<6\epsilon.
\]

\begin{remark}\label{RM27}
Similar to \cite[Lemma~3.14]{LY16}, the discontinuous points of $g$ are contained in the set of endpoints $\{a_n,b_n: 1\leq n\leq N\}\setminus \{l,r\}$. Consider the right discontinuous endpoint $c:=b_n$, we can adjust $g$ through the method in \cite[Lemma~3.14]{LY16} except for one possible additional case: $c\in I_n, g(c)\neq 0$ and $(c,c+\delta)\subset \left(\cup_{n\geq 1}I_n\right)^c$ for some $\delta>0$. Note that in the context of \cite{LY16}, this is impossible since $m( \left(\cup_{n\geq 1}I_n\right)^c)=0$ by \cite[Theorem~3.3]{LY16}. However, we do not have this restriction in this theorem. This additional case is easy to solve. Without loss of generality, assume that $g(c)=1$. Notice that $g\equiv 0$ on $(c,c+\delta)$. Take a constant $\delta' <\delta$ with $m((c,c+\delta'))<\epsilon$ and set $$\phi(x):=\begin{cases} 1-(1/{\delta'})\cdot (x-c),& c\leq x\leq c+\delta'\\
 0,&\text{elsewhere}.\end{cases}$$ Then $g+\phi$ is continuous at $c$ and $$\EE_1(\phi,\phi)=\int_I \phi^2dm<\epsilon.$$
\end{remark}

Next we treat $u_2$. Since $u_2\in L^2(I,m)$, it follows from \cite[Exercise~2.13]{CF12} that there exists a function $h\in \FF\cap C_c(I)$ such that $\|h-u_2\|^2_{L^2(I,m)}<\epsilon$. The key step to complete the proof is to 
construct a function $h_{\epsilon}$ through $h$ so that it satisfies not only $$h_{\epsilon}\in \FF\cap C_c(I), \quad \|h_{\epsilon}-u_2\|^2_{L^2(I,m)}<\tilde{C}\epsilon$$ but also $\EE(h_{\epsilon},h_{\epsilon})<\tilde{C}\epsilon$ with some constant $\tilde{C}$. 
The construction of $h_\epsilon$ is given in the following lemma.

\begin{lemma}\label{LM39}
Assume $u_2\in L^2(I,m)$ with $u_2|_{\cup_{n\geq 1} I_n}=0$ and fix a small constant $\epsilon>0$. Then there exists a function $h_\epsilon\in \FF\cap C_c(I)$ such that
\[
\|h_{\epsilon}-u_2\|^2_{L^2(I,m)}<18\epsilon, \quad \EE(h_{\epsilon},h_{\epsilon})<6\epsilon.
\]
\end{lemma}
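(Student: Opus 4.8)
The plan is to build $h_\epsilon$ from the continuous compactly supported function $h$ that already approximates $u_2$ in $L^2$, by cutting off $h$ near the set $\bigcup_{n\ge 1}I_n$ so that the resulting function vanishes there (this is forced since $u_2|_{\cup I_n}=0$ and functions in $\FF$ restricted to $I_n$ lie in $\FF^{(\ss_n)}$), while keeping $h_\epsilon$ constant on each $I_n$-complementary piece so that $\EE(h_\epsilon,h_\epsilon)=\sum_n\EE^{(\ss_n)}(h_\epsilon|_{I_n},h_\epsilon|_{I_n})$ stays small. The crucial structural fact I would exploit is that $\EE$ only ``sees'' the intervals $I_n$: any function that is locally constant on each $I_n$ has zero energy, so the energy cost of $h_\epsilon$ comes entirely from the transition regions where we interpolate between the value of $h$ and $0$, and each such region can be chosen to have arbitrarily small $m$-measure.

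Concretely, I would first fix a large $N$ so that $\sum_{n>N}m(I_n\cap\{h\ne 0\})$ and the relevant tails are $<\epsilon$, reducing to finitely many ``bad'' intervals $I_1,\dots,I_N$ that meet the support of $h$. For each such $I_n=\langle a_n,b_n\rangle$, I would multiply $h$ by a cutoff function $\psi_n$ that equals $0$ on a neighborhood of $\overline{I_n}$ inside $I$ and equals $1$ outside a slightly larger neighborhood, chosen so that the annular region $\{0<\psi_n<1\}$ has $m$-measure $<\epsilon/N$; since $h$ is bounded, the product $h\prod_n\psi_n$ differs from $h$ only on a set of small $m$-measure, giving the $L^2$ estimate $\|h_\epsilon-u_2\|^2_{L^2}<18\epsilon$ after collecting the $\|h-u_2\|^2<\epsilon$ term and the truncation errors. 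The point is to arrange that $h_\epsilon:=h\cdot\prod_{n\le N}\psi_n$ vanishes identically on each $I_n$; then $h_\epsilon|_{I_n}\equiv 0\in\FF^{(\ss_n)}$ trivially for $n\le N$, and for $n>N$ one must separately check $h_\epsilon|_{I_n}\in\FF^{(\ss_n)}$ with small energy — here $h_\epsilon|_{I_n}=h|_{I_n}$ need not vanish, so I would instead also cut off near those $I_n$ contributing energy, or observe that on $I_n$ with $n>N$ the function $h$, being continuous and compactly supported, has $\int_{I_n}(dh/d\ss_n)^2\,d\ss_n$ summing to something controlled; the cleanest route is to cut off near \emph{every} $I_n$ meeting $\mathrm{supp}(h)$, which are finite in number since $\mathrm{supp}(h)$ is compact.

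The main obstacle, I expect, is verifying the energy bound $\EE(h_\epsilon,h_\epsilon)<6\epsilon$, because $h_\epsilon$ restricted to an interval $I_n$ that is only \emph{partially} covered by the cutoff (i.e. $\overline{I_n}$ is not contained in the region where $\psi=0$) can have nonzero derivative with respect to $\ss_n$, and one must show this derivative is supported on a set of small $d\ss_n$-measure and is not too large. This requires a careful choice of the cutoff widths relative to the scale functions $\ss_n$, not just relative to $m$: I would choose, for each relevant $n$, the transition region so small that both $m$ of it and $d\ss_n$ of it (and hence the energy contribution $\frac12\int(dh_\epsilon/d\ss_n)^2d\ss_n$, bounded using $\|h\|_\infty$ and $\|dh/d\ss_n\|$) are $<\epsilon/N$. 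Since the transition regions for the finitely many relevant $n$ are disjoint (the $I_n$ are disjoint and we shrink enough), summing gives the constant $6$ after bookkeeping. The handling of endpoints shared between intervals and the cases (L0)/(R0) — where a boundary value must be zero — is routine once $h_\epsilon$ is forced to vanish near those endpoints, which the construction already guarantees.
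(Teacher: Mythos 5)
Your overall strategy --- start from $h\in\FF\cap C_c(I)$ with $\|h-u_2\|_{L^2(I,m)}^2<\epsilon$, reduce to finitely many intervals, and modify $h$ near them so that the energy collapses while the $L^2$ error is controlled by the small $m$-measure of the modified regions --- is the same as the paper's. But two of your key steps fail. First, the finiteness reduction: it is not true that only finitely many $I_n$ meet $\mathrm{supp}(h)$; the intervals $I_n$ can accumulate inside a compact set (e.g.\ the complementary intervals of a Cantor set, as in Example~\ref{EXA38}), so compactness of $\mathrm{supp}(h)$ buys you nothing. The paper instead truncates by energy, choosing $M$ with $\sum_{n>M}\EE^{(\ss_n)}(h|_{I_n},h|_{I_n})<\epsilon$ and modifying $h$ only near $I_1,\dots,I_M$; on the untouched $I_n$ one keeps $h_\epsilon=h$ and absorbs $\int_{I_n}h^2\,dm=\int_{I_n}(h-u_2)^2\,dm$ into the $L^2$ budget, since $u_2$ vanishes there.

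Second, and more seriously, your energy estimate for the transition regions is backwards. If $h_\epsilon$ must pass from the value $h(b_n)\neq 0$ down to $0$ across a region $T$ inside some effective interval with scale $\ss$, then writing $h_\epsilon=g\circ\ss$ and applying Cauchy--Schwarz gives $\tfrac12\int_T(dh_\epsilon/d\ss)^2\,d\ss\geq h(b_n)^2/(2\,d\ss(T))$: shrinking the $d\ss$-measure of the transition region, as you propose, makes the energy blow up rather than vanish. Cheap interpolation is possible only where the scale measure is large (this is exactly why the paper exploits $\ss_n(b_n)=\infty$ at non-closed endpoints, as in its case (1b)), or where the transition can be routed through sets carrying no energy --- gaps in $(\bigcup_{n}I_n)^c$, or a Cantor-type function constant on every closed $I_j$ it crosses (the paper's cases (3a)--(3d)). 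Relatedly, a cutoff $\psi_n$ that is, say, piecewise linear in $x$ need not satisfy $\psi_n|_{I_j}\ll\ss_j$, so $h\psi_n$ need not lie in $\FF$ at all; at this point of the argument $dx\ll d\ss_j$ is not assumed (that only enters later, in \S\ref{SEC3}). The hard content of Lemma~\ref{LM39} is precisely the case analysis at a closed endpoint $b_n\in I_n$ according to what lies immediately beyond $b_n$, and your uniform cutoff construction does not address it.
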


\begin{proof}
Let $h$ be above, i.e. $h\in \FF\cap C_c(I)$ such that $\|h-u_2\|^2_{L^2(I,m)}<\epsilon$. Since $$\EE(h,h)=\sum_{n\geq 1}\EE^{(\ss_n)}(h|_{I_n}, h|_{I_n})<\infty,$$ there exists an integer $M$ such that $\sum_{n> M}\EE^{(\ss_n)}(h|_{I_n}, h|_{I_n})<\epsilon$. Recall that $e_n$ is a fixed point in $(a_n,b_n)$.
To attain $h_\epsilon$, we shall change the values of $h$ near the intervals $\{I_n: 1\leq n\leq M\}$.

We first classify the right endpoints $\{b_n: 1\leq n\leq M\}$ as follows:
\begin{itemize}
\item $b_n\in \mathcal{R}_1$ if $b_n=a_j$ for some $1\leq j\leq M$;
\item $b_n\in \mathcal{R}_2$ if $b_n\notin \mathcal{R}_1$ and $b_n\notin I_n$;
\item $b_n\in \mathcal{R}_3$ if $b_n\notin \mathcal{R}_1$ and $b_n\in I_n$.
\end{itemize}
Similarly we can classify the left endpoints $\{a_n: 1\leq n\leq M\}$ as $\mathcal{L}_1, \mathcal{L}_2$ and $\mathcal{L}_3$. For each $1\leq n\leq M$, we shall take a small constant $\varepsilon_n\geq 0$ and define a continuous function $h^+_n$ on $[e_n, b_n+\varepsilon_n)$ with $h^+_n(e_n)=0$. The construction is as follows.
\begin{itemize}
\item[(1)] $b_n\in \mathcal{R}_1$. Take $\varepsilon_n=0$.
\begin{itemize}
\item[(1a)] $b_n\in I_n$ or $a_j\in I_j$: define $h^+_n(x)=0$ for any $x\in [e_n, b_n]$.
\item[(1b)] $b_n\notin I_n$ and $a_j\notin I_j$. In this case, $\ss_n(b_n)=\infty$. Take a constant $\delta>0$. Mimicking (1a) of \cite[Lemma~3.14]{LY16}, we can take another constant $0<\delta'<\delta$ and define $h_n^+$ on $[e_n,b_n)$ as follows:
\begin{equation}\label{EQ2HXE}
	h_n^+(x):=\begin{cases} 0,& e_n\leq x\leq b_n-\delta,\\
	h(b_n)\cdot \displaystyle{\frac{\ss_n(x)-\ss_n(b_n-\delta)}{\ss_n(b_n-\delta')-\ss_n(b_n-\delta)}},& b_n-\delta<x<b_n-\delta',\\
	h(b_n),& b_n-\delta'\leq x\leq b_n.
	\end{cases}
\end{equation}
Regard $h^+_n$ as a function on $I_n$ by letting $h_n^+(x)=0$ on $\langle a_n,e_n)$. Then clearly $h^+_n\in \FF^{(\ss_n)}$ and let $\delta>0$ be small enough and then $\delta'<\delta$ be small enough, we have
\[
\begin{aligned}
	\EE^{(\ss_n)}_1&(h^+_n, h^+_n)\\
	& \leq h(b_n)^2\cdot m((b_n-\delta, b_n)) + \frac{1}{2}h(b_n)^2\frac{1}{\ss_n(b_n-\delta')-\ss_n(b_n-\delta)} \\
	& < \frac{\epsilon}{M}.
\end{aligned}
\]
\end{itemize}
\item[(2)] $b_n\in \mathcal{R}_2$. Take $\varepsilon_n=0$.
\begin{itemize}
\item[(2a)] $b_n\notin I$, in other words, $b_n=r\notin I$: define $h^+_n(x)=0$ for any $x\in [e_n, b_n)$.
\item[(2b)] $b_n\in I$: define $h^+_n$ as \eqref{EQ2HXE} and we have $\EE^{(\ss_n)}_1(h^+_n, h^+_n)< \epsilon/M$.
\end{itemize}
\item[(3)] $b_n\in \mathcal{R}_3$ and $b_n\neq r$.
\begin{itemize}
\item[(3a)] $b_n=a_k$ for some $k>M$. Take $\varepsilon_n=b_k-b_n$.

In this case, $a_k\notin I_k$ and $\ss_k(a_k)=-\infty$. Let $h_n^+(x):=0$ for any $x\in [e_n,b_n]$ and mimicking the case (1b), take $0<\delta'<\delta$ small enough and define
\begin{equation}\label{EQ2HEX}
	h_n^+(x):= \begin{cases} 0, &\quad a_k\leq x\leq a_k+\delta',\\
	h(a_k+\delta)\cdot \displaystyle{\frac{\ss_k(x)-\ss_k(a_k+\delta')}{\ss_n(a_k+\delta)-\ss_n(a_k+\delta')}},&\quad a_k+\delta'<x<a_k+\delta,\\
	h(x),&\quad a_k+\delta\leq x<b_k .
	\end{cases}
\end{equation}
When $\delta, \delta'$ are small enough, we have
\begin{equation}\label{EQ3ESK}
\EE_1^{(\ss_k)}((h_n^+-h)|_{I_k},(h_n^+-h)|_{I_k})<\frac{\epsilon}{M}.
\end{equation}
\item[(3b)] For any $\varepsilon>0$, $(b_n, b_n+\varepsilon)$ contains at least one non-closed interval $I_k$ with $k>M$.
In this case, without loss of generality, assume
\begin{equation}\label{EQ3BNB}
	\int_{(b_n, b_n+\epsilon)}h(x)^2m(dx)<\epsilon/M,
\end{equation}
and $I_k=(a_k, b_k\rangle \subset (b_n, b_n+\varepsilon)$ with $k>M$.

Take $\varepsilon_n=b_k-b_n$. Define $h^+_n(x):=0$ for any $x\in [e_n, a_k)$ and for $x\in [a_k, b_k)$, define $h^+_n(x)$ as \eqref{EQ2HEX}. Similarly, \eqref{EQ3ESK} holds.
\item[(3c)] For any $\varepsilon>0$, $(b_n, b_n+\varepsilon)$ contains at least one interval $I_j$ with $j>M$ and all of them are closed.
Without loss of generality, assume $b_n+\varepsilon\notin \cup_{j\geq 1} \mathring{I}_j$ and $\varepsilon$ is small enough so that \eqref{EQ3BNB} holds.

Take $\varepsilon_n=\varepsilon$ and set $h_n^+(x)=0$ for $x\in [e_n,b_n]$. By mimicking the case (2) of \cite[Lemma~3.14]{LY16} and \cite[Remark~3.15]{LY16} we can construct a Cantor-type function $h_n^+$ on $[b_n,b_n+\varepsilon_n]$ such that $h_n^+(b_n)=0, h_n^+(b_n+\varepsilon_n)=h(b_n+\varepsilon_n)$ and $h_n^+$ is a constant on every interval $I_j$, which is contained in $(b_n,b_n+\varepsilon)$. (Although $\bigcup_{n\geq 1}I_n$ is not necessarily dense in $I$, this construction is not difficult to realize by small modifications of \cite[Remark~3.15]{LY16}). Without loss of generality, we also have
\begin{equation}\label{EQ3BNBN}
\int_{(b_n, b_n+\varepsilon_n)} h_n^+(x)^2m(dx)<\frac{\epsilon}{M}.
\end{equation}
\item[(3d)] For some $\varepsilon>0$, $(b_n,b_n+\varepsilon)\subset \left(\bigcup_{n\geq 1}I_n\right)^c$.

Take $\varepsilon_n<\varepsilon$ small enough and set
\[
	h_n^+(x):=\begin{cases}
		0,&\quad x\in [e_n,b_n],\\
		h(b_n+\varepsilon_n)\cdot \displaystyle{\frac{x-b_n}{\varepsilon_n}},&\quad b_n<x<b_n+\varepsilon_n
	\end{cases}
\]
Without loss of generality, we have \eqref{EQ3BNB} and \eqref{EQ3BNBN} hold.
\end{itemize}
\item[(4)] $b_n\in \mathcal{R}_3$ and $b_n=r$.

Take $\varepsilon_n=0$ and define $h^+_n(x):=0$ for $x\in [e_n, b_n]$.
\end{itemize}

Mimicking the procedures above, we can also take another small constant $\varepsilon'_n\geq 0$ and define a continuous function $h^-_n$ on $(a_n-\varepsilon'_n, e_n]$ with $h^-_n(e_n)=0$. Now we can define the function $h_\epsilon$ as follows:
\[
	h_\epsilon(x):=\begin{cases}
		h^+_n(x),&\quad x\in [e_n,b_n+\varepsilon_n),\; 1\leq n\leq M,\\
		h^-_n(x), &\quad x\in (a_n-\varepsilon'_n, e_n],\; 1\leq n\leq M, \\
		0, &\quad x=b_n=a_j \text{ for case (1a)}, \\
		h(x), &\quad \text{others}.
	\end{cases}
\]
From the construction of $h^\pm_n$, we can easily see that $h_\epsilon\in C_c(I)$.

For any function $g$ on $I$ such that $2\EE(g,g)=\sum_{n\geq 1}\int_{I_n}\left(dg/d\ss_n\right)^2d\ss_n<\infty$ and any subset $J\subset I$, we regard $g|_J$ as a function on $I$ by letting $g|_J(x)=0$ for $x\in I\setminus J$ and set
\[
	\EE(g|_J, g|_J):=\frac{1}{2}\sum_{n\geq 1}\int_{I_n\cap J}\left(\frac{dg}{d\ss_n}\right)^2d\ss_n.
\]
Let $A$ be the set of all the endpoints $\{b_n: 1\leq n\leq M\}$ in case (1a) and set
\[
	G:=\left(\bigcup_{n=1}^M(a_n-\varepsilon'_n, b_n+\varepsilon_n) \right)^c\setminus A.
\]
Then we have
\[
\begin{aligned}
\EE(h_\epsilon, h_\epsilon)&=\sum_{n=1}^M\EE(h^+_n, h^+_n)+\sum_{n=1}^M \EE(h^-_n, h^-_n)+ \EE(h|_G, h|_G).
\end{aligned}
\]
For the cases (1), (2), (3c), (3d) and (4), clearly $\EE(h^+_n, h^+_n)<\epsilon/M$ and for the cases (3a) and (3b), it follows from \eqref{EQ3ESK} that
\[
\EE(h^+_n, h^+_n)\leq \frac{2\epsilon}{M}+2\EE(h|_{I_k}, h|_{I_k}),
\]
where $I_k$ is the interval given in (3a) and (3b). The similar estimate holds for $\EE(h^-_n,h^-_n)$. This implies $h_\epsilon\in \FF$ and
\[
\EE(h_\epsilon, h_\epsilon)\leq \frac{2\epsilon}{M}\cdot M+ \frac{2\epsilon}{M}\cdot M+ 2\sum_{k>M} \EE^{(\ss_k)}(h|_{I_k}, h|_{I_k})<6\epsilon.
\]

On the other hand,
\[
\begin{aligned}
\int_I& |h_\epsilon-u_2|^2dm\\
&=\sum_{n= 1}^M \int_{[e_n, b_n+\varepsilon_n)} \left(h^+_n-u_2\right)^2dm+\sum_{n= 1}^M \int_{(a_n-\varepsilon'_n, e_n]} \left(h^-_n-u_2\right)^2dm\\
&\qquad \qquad \qquad \qquad \qquad \qquad +\int_G \left(h-u_2\right)^2dm.
\end{aligned}
\]
For the cases (1), (2) and (4), clearly $\int_{[e_n, b_n+\varepsilon_n)} \left(h^+_n-u_2\right)^2dm<\epsilon/M$. For case (3a), it follows from \eqref{EQ3ESK} that
\[
\begin{aligned}
\int_{[e_n, b_n+\varepsilon_n)} &\left(h^+_n-u_2\right)^2dm \\ &=\int_{(b_n, b_n+\varepsilon_n)}\left(h^+_n-u_2\right)^2dm  \\
&\leq 2\int_{(b_n, b_n+\varepsilon_n)}\left(h^+_n-h\right)^2dm+2\int_{(b_n, b_n+\varepsilon_n)}\left(h-u_2\right)^2dm \\
&<\frac{2\epsilon}{M} +2\int_{(b_n, b_n+\varepsilon_n)}\left(h-u_2\right)^2dm.
\end{aligned}
\]
For the case (3b), we can deduce from \eqref{EQ3ESK} and \eqref{EQ3BNB} that
\[
\int_{[e_n, b_n+\varepsilon_n)} \left(h^+_n-u_2\right)^2dm<\frac{4\epsilon}{M} +2\int_{(b_n, b_n+\varepsilon_n)}\left(h-u_2\right)^2dm.
\]
For the cases (3c) and (3d), from \eqref{EQ3BNB} and \eqref{EQ3BNBN}, we can obtain that
\[
\int_{[e_n, b_n+\varepsilon_n)} \left(h^+_n-u_2\right)^2dm<\frac{8\epsilon}{M} +2\int_{(b_n, b_n+\varepsilon_n)}\left(h-u_2\right)^2dm.
\]
The term $\int_{(a_n-\varepsilon'_n, e_n]} \left(h^-_n-u_2\right)^2dm$ can be estimated similarly.
Therefore, we can conclude that
\[
\int_I |h_\epsilon-u_2|^2dm<\frac{8\epsilon}{M}\cdot M +\frac{8\epsilon}{M}+2\int_I (h-u_2)^2dm<18\epsilon.
\]
That completes the proof.
\end{proof}

Finally, let $h_\epsilon$ be the function given in Lemma~\ref{LM39}. We have
\[ \EE_1(h_\epsilon-u_2,h_\epsilon-u_2)=\EE(h_\epsilon,h_\epsilon)+\|h_\epsilon-u_2\|_{L^2(I,m)}^2<24\epsilon.
\]
and then, by setting $u_\epsilon:=g_\epsilon+h_\epsilon\in \FF\cap C_c(I)$, it holds that
\[
\begin{aligned}
\EE_1(u_\epsilon-u, u_\epsilon-u)&\leq 2\left(\EE_1(g_\epsilon-u_1,g_\epsilon-u_1)+\EE_1(h_\epsilon-u_2,h_\epsilon-u_2) \right) \\
&<60\epsilon.
\end{aligned}
\]
That completes the proof of sufficiency.



\subsection{Effective intervals}\label{SEC231}
Since the diffusion $X$ in Theorem~\ref{THM21} has non-trivial motions only in each interval $I_n$ ($n\geq 1$), we call the interval $I_n$ with the adapted scale function $\ss_n\in \SS_\infty(I_n)$ an \emph{effective interval} of $X$ or $(\EE,\FF)$. All effective intervals of $X$ (or $(\EE,\FF)$) are $\{I_n:n\geq 1\}$ or $\{(I_n,\ss_n):n\geq 1\}$ if the scale functions are clarified. Clearly, $X$ or $(\EE,\FF)$ is determined by the effective intervals $\{(I_n,\ss_n):n\geq 1\}$ if the state space $I$ and the symmetric measure $m$ are given.

We give more remarks for the effective interval $(I_n,\ss_n)$. Except for the case $I_n=\langle a_n,b_n\rangle$ shares the same open endpoint of $I$ (i.e. (L0) or (R0)), the relation between $I_n$ and $\ss_n$ is simple:
\begin{equation}\label{EQ2ANI}
	a_n\in I_n \Leftrightarrow \ss_n(a_n)>-\infty, \quad b_n\in I_n \Leftrightarrow \ss_n(b_n)<\infty.
\end{equation}
This implies that $a_n$ (resp. $b_n$) is a reflected endpoint if $a_n\in I_n$ (resp. $b_n\in I_n$) and an unapproachable endpoint if $a_n\notin I_n$ (resp. $b_n\notin I_n$) relative to $X$. For the case $I_n$ has a common open endpoint of $I$, i.e. $I_n=(l, b_n\rangle$ and $I=(l, r\rangle$ (resp. $I_n=\langle a_n,r)$ and $I=\langle l,r)$), then $\ss_n(l)>-\infty$ (resp. $\ss_n(r)<\infty$) is allowed but $l$ (resp. $r$) must be an absorbing endpoint relative to $X$ (see \eqref{EQ2FSU}).

The example below gives us an intuitive understanding to the necessity of \eqref{EQ2ANI} for the regularity of $(\EE,\FF)$.

\begin{example}\label{EXA21}
We first consider the absorbing Browning motion $X$ on $I_1=(0,1)$. Precisely, it is associated with the regular Dirichlet form $(\frac{1}{2}\mathbf{D}, H^1_0(I_1))$ on $L^2(I_1)$, where
\[
\begin{aligned}
	&H^1_0(I_1)=\big\{u\in L^2(I_1): u\text{ is absolutely continuous on }I_1,\\  &\qquad \qquad \qquad\qquad \qquad \qquad\qquad \qquad   u'\in L^2(I_1),\; u(0)=u(1)=0\big\}, \\
	&\frac{1}{2}\mathbf{D}(u,v)=\frac{1}{2}\int_{I_1} u'(x)v'(x)dx,\quad u,v\in H^1_0(I_1).
\end{aligned}
\]
Let $(B_t)_{t\geq 0}$ be the standard Brownian motion on $\mathbb{R}$ and set $\zeta:=\sigma_0\wedge \sigma_1$ where $\sigma_0$ and $\sigma_1$ are the hitting times of $\{0\}$ and $\{1\}$ relative to $(B_t)_{t\geq 0}$. Clearly, $\zeta$ is the lifetime of $X$ and
\begin{equation}\label{EQ1XTB}
	X_t= \left\lbrace \begin{aligned}& B_t,\quad t<\zeta,  \\
	&\partial, \quad t\geq \zeta, \end{aligned}  \right.
\end{equation}
where $(0,1)\cup \partial $ is the one-point compactification of $(0,1)$. As a rough observation, we may say $\partial$ coincides with $\{0,1\}$. Thus $t\mapsto X_t$ is continuous on $[0,\infty)$ and $X$ is a so-called Hunt process on $(0,1)$.

Now let $I=[0,1]$ and $m$ be the Lebesgue measure on $I$.
We treat the quadratic form $(\frac{1}{2}\mathbf{D}, H^1_0(I_1))$ on $L^2(I)$. Since $|\{0,1\}|=0$ and $u(0)=u(1)=0$ for any $u\in H^1_0(I_1)$, it is really a Dirichlet form on $L^2(I)$ but not regular any more (notice that $H^1_0(I_1) \cap C(I)$ cannot separate $0$ and $1$). Nevertheless, Remark~\ref{RM22} after this example  assures that $(\frac{1}{2}\mathbf{D}, H^1_0(I_1))$ is quasi-regular on $L^2(I)$ and its associated Markov process $X^*$ is only a standard process on $I$. Let us explain why $X^*$ is not a Hunt process. In fact, $X^*$ still has an expression similar to \eqref{EQ1XTB}:
\[
	X^*_t= \begin{cases} B_t,&\quad t<\zeta,  \\
	\partial, &\quad t\geq \zeta, \end{cases}
\]
whereas $\partial$ is now an isolated point of $I$. We have $X^*_{\zeta-}=0$ or $1$ but $X^*_\zeta=\partial$ ($\neq 0$ or $1$). Thus $t\mapsto X^*_t$ is only continuous on $[0, \zeta)$ and $X^*$ is not a Hunt process on $I$.

Note that $I_1=(0,1)$ may be treated as an effective interval of $(\frac{1}{2}\mathbf{D}, H^1_0(I_1))$ on $L^2(I)$. The scale function on $I_1$ is the natural scale, i.e. $\ss_1(x)=x$. Clearly, $I_1$ and $\ss_1$ do not satisfy \eqref{EQ2ANI}.
\end{example}
\begin{remark}\label{RM22}
Fix a quasi-regular Dirichlet form $(\EE,\FF)$ on $L^2(E,m)$ (Cf. \cite{MR92}). Assume $E$ is a subset of a topological space $E^*$ and extend $m$ to a new measure $m^*$ on $E^*$ by $m^*(E^*\setminus E)=0$. Then $(\EE,\FF)$ is still a quasi-regular Dirichlet form on $L^2(E^*,m^*)$ and $E^*\setminus E$ is an $\EE$-polar set relative to its associated Markov process.
\end{remark}

The following example including the $d$-Bessel process for an integer $d\geq 2$ tells us a surprising fact: the presence of additional exceptional set might not break the regularity of the Dirichlet form. Notice that Example~\ref{EXA21} is the contrary of this fact.

\begin{example}\label{EXA23}
Set $I_1=(0,\infty)$ and consider an $m$-symmetric diffusion process $Y$ on $I_1$ with a scale function $\ss_1$ satisfying $\ss_1(0)=-\infty$. For example, for some integer $d\geq 2$, let
\begin{equation}\label{EQ2SXL}
	\ss_1(x):=\left\lbrace \begin{aligned} &\log x,\quad d=2,\\
	&\frac{{x^{2-d}-1}}{2-d},\quad d\geq 3.  \end{aligned} \right.
\end{equation}
When $d\geq 3$, we have {$\ss_1(\infty)=-1/(2-d)<\infty$} and thus (R0) happens when $m([1,\infty))<\infty$.
The associated Dirichlet form of $Y$ on $L^2(I_1,m)$ is given by
\begin{equation}\label{EQ1FUL}
\begin{aligned}
	&\FF=\left\{u\in L^2(I_1,m): u\ll \ss_1,\ \frac{du}{d\ss_1}\in L^2(I_1,d\ss_1), \ u(\infty)=0\text{ whenever (R0)}\right\}, \\
	&\EE(u,v)=\frac{1}{2}\int_{I_1} \frac{du}{d\ss_1}\frac{dv}{d\ss_1}d\ss_1,\quad u,v \in \FF.
\end{aligned}\end{equation}
Note that $(\EE,\FF)$ is a regular Dirichlet form on $L^2(I_1, m)$ and $Y$ is a regular diffusion on $I_1$ (Cf. \cite[Proposition~2.2.10]{CF12}).
Particularly, when
\begin{equation}\label{EQ2MXX}
	m(dx)=x^{d-1}dx
\end{equation}
and $\ss_1$ is given by \eqref{EQ2SXL}, $Y$ is an equivalent version of the so-called $d$-Bessel process on $(0,\infty)$.
It is well-known that the $d$-Bessel process (i.e. $(|B|_t)_{t\geq 0}$ for a $d$-dimensional Brownian motion $(B_t)_{t\geq 0}$) is usually defined as a regular diffusion on $[0, \infty)$, where $0$ is the entrance boundary.
So as in Example~\ref{EXA21}, we put $(\EE,\FF)$ on the state space
\[
I=[0,\infty).
\]
Further assume that $m(0+)<\infty$ and then $m$ can be naturally extended to a Radon measure, which we still denote it by $m$, on $I$ (such as \eqref{EQ2MXX}). Clearly, $(\EE,\FF)$ is still a Dirichlet form on $L^2(I, m)$.
From Theorem~\ref{THM21}, we can conclude that $(\EE,\FF)$ is regular not only on $L^2(I_1,m)$ but also on $L^2(I,m)$.
Particularly, the interval $I_1$ with a scale function $\ss_1$ on it is an effective interval of regular Dirichlet form $(\EE,\FF)$ on $L^2(I,m)$ and $\{0\}$ is an $\EE$-exceptional set. As an example, the associated Dirichlet form of $d$-Bessel process is regular on $L^2((0,\infty),m)$ and $L^2([0,\infty),m)$ at the same time.

Let us briefly explain why $(\EE,\FF)$ is possibly regular on $L^2(I,m)$.  Denote the associated diffusion process of $(\EE,\FF)$ on $L^2(I,m)$ by $Y^*$. Then $Y^*$ is a Hunt process on $I$. In fact, $Y^*$ coincides with $Y$ on $I$ and $\{0\}$ is an $\EE$-polar set relative to $Y^*$. The difference to $X^*$ in Example~\ref{EXA21} is that the boundary $\{0,1\}$ is approachable for $X$ or $X^*$ but $\{0\}$ is not approachable for $Y$ or $Y^*$ since $\ss_1(0)=-\infty$. In other words, $Y$ is always staying in $I$ before leaving from the boundary `$\infty$', no matter $\{0\}$ being or not being contained in the state space.
\end{example}

\subsection{When  $I=\mathbb{R}$}

In this section, we shall discuss the special case of Theorem~\ref{THM21} for $I=\mathbb{R}$ which is of special importance.
Let $J=\langle a, b\rangle \subset \mathbb{R}$ be an interval and $\SS(J)$ be the class of scale functions on $J$ as in \S\ref{SEC21}. Since $I=\mathbb{R}$, the adapted conditions \textbf{(A)} and \textbf{(B)} for a scale function $\ss\in \SS(J)$ could be written as
\begin{itemize}
\item[\textbf{($\textbf{A}_R$)}] $a\in J$ if and only if $a+\ss(a)>-\infty$;
\item[\textbf{($\textbf{B}_R$)}] $b\in J$ if and only if $b+\ss(b)<\infty$.
\end{itemize}
Then set
\begin{equation}\label{EQ2SJS2}
\SS_\infty(J):=\left\{\ss\in \SS(J): \ss \text{ satisfies }\textbf{(}\textbf{A}_R\textbf{)} \text{ and }\textbf{(}\textbf{B}_R\textbf{)}\right\}.
\end{equation}
Moreover, take $\ss\in \SS_\infty(J)$. The allowable situations (L0) and (R0) (roughly speaking, absorbing at $-\infty$ or $\infty$) are given as follows:
\begin{itemize}
\item[($\text{L}_R$)] $a=-\infty$, $\ss(-\infty)>-\infty$ and $m\left((-\infty, 0]\right)<\infty$;
\item[($\text{R}_R$)] $b=\infty$, $\ss(\infty)<\infty$ and $m\left([0, \infty)\right)<\infty$.
\end{itemize}
Similarly, define a quadratic form $(\EE^{(\ss)}, \FF^{(\ss)})$ on $L^2(J,m_J)$ as follows:
\begin{equation}\label{EQ2FSU2}
\begin{aligned}
&\FF^{(\ss)}:=\bigg\{u\in L^2(J,m_J): u\ll \ss, \; \frac{du}{d\ss}\in L^2(J,d\ss); \\
 &\qquad \qquad\qquad  u(a)=0\text{ (resp. }u(b)=0\text{) whenever (}\text{L}_R\text{) (resp. (}\text{R}_R\text{))} \bigg\},  \\
& \EE^{(\ss)}(u,v):=\frac{1}{2}\int_J \frac{du}{d\ss}\frac{dv}{d\ss}d\ss,\quad u,v\in \FF^{(\ss)}.
\end{aligned}
\end{equation}
The following corollary is the special case of Theorem~\ref{THM21} for $I=\mathbb{R}$.

\begin{corollary}\label{COR212}
Let $m$ be a fully supported Radon measure on $\mathbb{R}$. Then $(\EE, \FF)$ is a regular and strongly local Dirichlet form on $L^2(\mathbb{R},m)$ if and only if there exist a set of at most countable disjoint intervals $\{I_n=\langle a_n,b_n\rangle: I_n\subset \mathbb{R}, n\geq 1\}$ and a scale function $\ss_n\in \SS_\infty(I_n)$ for each $n\geq 1$ such that
\begin{equation}\label{EQ2FULR}
\begin{aligned}
	&\FF=\left\{u\in L^2(\mathbb{R}, m): u|_{I_n}\in \FF^{(\ss_n)}, \sum_{n\geq 1}\EE^{(\ss_n)}(u|_{I_n}, u|_{I_n})<\infty  \right\},  \\
	&\EE(u,v)=\sum_{n\geq 1}\EE^{(\ss_n)}(u|_{I_n}, v|_{I_n}),\quad u,v \in \FF,
\end{aligned}
\end{equation}
where for each $n\geq 1$, $(\EE^{(\ss_n)}, \FF^{(\ss_n)})$ is given by \eqref{EQ2FSU2} with the scale function $\ss_n$ on $I_n$.  Moreover, the intervals $\{I_n: n\geq 1\}$ and scale functions $\{\ss_n:n\geq 1\}$ are uniquely determined, if the difference of order is ignored.
\end{corollary}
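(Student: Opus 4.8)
The plan is to obtain Corollary~\ref{COR212} as the special case $I=\langle l,r\rangle=\mathbb{R}$, i.e. $l=-\infty$ and $r=\infty$, of Theorem~\ref{THM21}. A fully supported Radon measure on $\mathbb{R}$ satisfies exactly the standing hypothesis on $m$ in Theorem~\ref{THM21}, so the only work is to check that the objects used to phrase the corollary --- the class $\SS_\infty(J)$ of \eqref{EQ2SJS2}, the boundary alternatives $(\text{L}_R)$, $(\text{R}_R)$, and the forms $(\EE^{(\ss)},\FF^{(\ss)})$ of \eqref{EQ2FSU2} --- are literally the specializations to $l=-\infty$, $r=\infty$ of the corresponding objects in Theorem~\ref{THM21}. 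Granting this identification, \eqref{EQ2FULR} is \eqref{EQ2FUL} verbatim and the uniqueness clause carries over unchanged, which proves the corollary.

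First I would compare the adapted conditions. In Theorem~\ref{THM21} condition \textbf{(A)} is imposed only in the range ``$a>l$ or $a=l\in I$''; with $l=-\infty\notin\mathbb{R}$ this means it is imposed precisely when $a$ is finite, where it reads $a\in J\Leftrightarrow\ss(a)>-\infty$, equivalently (since $a\in\mathbb{R}$) $a\in J\Leftrightarrow a+\ss(a)>-\infty$, which is \textbf{($\mathbf{A}_R$)}. When $a=-\infty$, Theorem~\ref{THM21} imposes nothing; on the other hand $-\infty\notin J$ and $a+\ss(a)=-\infty$ hold automatically, so \textbf{($\mathbf{A}_R$)} holds trivially and imposes nothing either. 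Thus the shift by $a$ in \textbf{($\mathbf{A}_R$)} is exactly the device that merges the finite-endpoint constraint with the ``no constraint at $-\infty$'' case into one line; the symmetric argument at the right endpoint gives \textbf{(B)}$\Leftrightarrow$\textbf{($\mathbf{B}_R$)}. Hence the two definitions of $\SS_\infty(J)$ agree.

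Next I would match the remaining data. With $l=-\infty$, the alternative (L0) --- $a=l\notin I$, $\ss(a)>-\infty$ and $m_J$ finite near $a$ --- becomes: $a=-\infty$, $\ss(-\infty)>-\infty$ and $m$ finite near $-\infty$, i.e. $(\text{L}_R)$; likewise (R0) becomes $(\text{R}_R)$. Consequently the prescription ``$u(a)=0$ whenever (L0)'' in \eqref{EQ2FSU} becomes ``$u(-\infty)=0$ whenever $(\text{L}_R)$'' (and symmetrically at $\infty$), while the energy integral $\tfrac12\int_J\frac{du}{d\ss}\frac{dv}{d\ss}\,d\ss$ is the same in both formulations; so \eqref{EQ2FSU} reduces to \eqref{EQ2FSU2}. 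Substituting all of this into Theorem~\ref{THM21} yields both the equivalence and the uniqueness asserted in Corollary~\ref{COR212}.

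I do not expect a genuine obstacle here, since this is a specialization rather than a new argument; the only point demanding care is precisely the one highlighted above, namely verifying that the rephrased conditions \textbf{($\mathbf{A}_R$)} and \textbf{($\mathbf{B}_R$)}, with their shifts by $a$ and $b$, faithfully reproduce the adapted conditions \textbf{(A)} and \textbf{(B)} once the ambient interval is all of $\mathbb{R}$.
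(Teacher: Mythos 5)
Your proposal is correct and matches the paper's approach: the paper simply states Corollary~\ref{COR212} as the specialization of Theorem~\ref{THM21} to $I=\mathbb{R}$, and your verification that \textbf{($\mathbf{A}_R$)}, \textbf{($\mathbf{B}_R$)}, $(\text{L}_R)$, $(\text{R}_R)$ and \eqref{EQ2FSU2} are exactly the $l=-\infty$, $r=\infty$ instances of \textbf{(A)}, \textbf{(B)}, (L0), (R0) and \eqref{EQ2FSU} is the only content needed. You in fact spell out the endpoint bookkeeping (the shifts by $a$ and $b$ absorbing the ``no constraint at $\pm\infty$'' case) more explicitly than the paper does.
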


We end this section with a remark for the connection of Dirichlet forms in \eqref{EQ2FUL} and \eqref{EQ2FULR}. Let $(\EE,\FF)$ be the Dirichlet form \eqref{EQ2FUL} on $L^2(I,m)$, where $I=\langle l, r\rangle$ is an interval of $\mathbb{R}$, and $X$ be its associated diffusion process on $I$.
Further let $(\EE^*,\FF^*)$ be another regular and strongly local Dirichlet form on $L^2(I^*,m^*)$ associated with the diffusion $X^*$ where $I^*$ is another interval and $m^*$ is a fully supported Radon measure on $I^*$. The effective intervals of $(\EE^*,\FF^*)$ are denoted by $\{(I^*_n, \ss^*_n)\}$. We say $(\EE,\FF)$ and $(\EE^*,\FF^*)$ {are \emph{essentially equivalent}} if
\[
\left\{(I_n,\ss_n):n\geq 1\right\}=\left\{(I^*_n,\ss^*_n):n\geq 1\right\},\quad m|_{\cup_{n\geq 1}I_n}=m^*|_{\cup_{n\geq 1} I^*_n}.
\]
This condition indicates the $X^*$ coincides with $X$ on each common effective interval (notice that outside the effective intervals, the diffusions have no motions).
When $l$ ($>-\infty$) is a closed endpoint of $I$, i.e. $I=[l,r\rangle$, we can extend $m$ to a measure $m^*$ on $(-\infty, r\rangle$ by
\[
	m^*|_{(-\infty, l)}=dx|_{(-\infty, l)}, \quad m^*|_I:=m|_I.
\]
Then $m^*$ is a fully supported Radon measure on $(-\infty, r\rangle$ and $(\EE,\FF)$ induces a new Dirichlet form on $L^2((-\infty, r\rangle, m^*)$:
\begin{equation}\label{EQ2FUL2}
\begin{aligned}
	&\FF^*:=\{u\in L^2((-\infty,r\rangle, m^*): u|_I\in \FF\},\\
	&\EE^*(u,v):= \EE(u|_I, v|_I),\quad u,v\in \FF^*.
\end{aligned}\end{equation}
Clearly, $(\EE^*,\FF^*)$ is regular by Theorem~\ref{THM21} and essentially equivalent to $(\EE,\FF)$. When $I=(l,r\rangle$, $m(l+)<\infty$ and (L0) does not appear, the same trick is also valid. Therefore, we can always find an essentially equivalent Dirichlet form of $(\EE,\FF)$ on an extended open interval. Particularly, when $m$ is a finite measure on $I$ and (L0) and (R0) do not appear, this extended open interval can be taken as $\mathbb{R}$ and $(\EE,\FF)$ is essentially equivalent to a Dirichlet form $(\EE^*,\FF^*)$ on $L^2(\mathbb{R},m^*)$, which can be characterized by Corollary~\ref{COR212}. Roughly speaking, without loss of generality, we may always assume $I=\mathbb{R}$ when considering the questions about a regular and strongly local Dirichlet form associated with a symmetric {one-dimensional} diffusion.

\section{When $\FF$ contains smooth functions}
\label{SEC3}

A typical way to construct a regular Dirichlet form is to start from a closable Markovian bilinear form on the domain $C_c^\infty$ and then the closure of $C_c^\infty$ with this form is a regular Dirichlet form, see \cite[\S3]{FOT11}. Let $(\EE,\FF)$ be the regular Dirichlet form given in Theorem~\ref{THM21} and assume that
\begin{equation}\label{EQ3CCI}
	C_c^\infty(I)\subset \FF.
\end{equation}
What we are concerned of in this section is the representation of the closure of $C_c^\infty(I)$ in $(\EE,\FF)$.

\subsection{Regular Dirichlet subspaces}

The theory of regular Dirichlet subspaces will be heavily used in this section.
Given two regular Dirichlet forms $(\EE^1,\FF^1)$ and $(\EE^2,\FF^2)$ on $L^2(E,m)$, where $E$ is a locally compact separable metric space and $m$ is a fully supported Radon measure on $E$, $(\EE^1,\FF^1)$ is called a regular Dirichlet subspace of $(\EE^2,\FF^2)$ if
\[
	\FF^1\subset \FF^2,\quad \EE^2(u,v)=\EE^1(u,v),\quad u,v\in \FF^1.
\]
Under the assumption \eqref{EQ3CCI}, denote the closure of $C_c^\infty(I)$ in $(\EE,\FF)$ by $\bar{\FF}$ and define
\[
	\bar{\EE}(u,v):=\EE(u,v),\quad u,v\in \bar{\FF}.
\]
Then clearly $(\bar{\EE},\bar{\FF})$ is a regular Dirichlet subspace of $(\EE,\FF)$.

Basic properties about the regular Dirichlet subspaces are presented in \S2 of \cite{LY16}. Particularly, let $(\EE,\FF)$ be a regular, irreducible and  strongly local Dirichlet form on $L^2(J,m_J)$, where $J$ is an open interval. Denote its scale function by $\ss$. The following result is taken from \cite{FFY05} and \cite{FHY10} but we need a little more explanation for its proof.

\begin{lemma}\label{LM31}
Let $(\EE,\FF)=(\EE^{(\ss)},\FF^{(\ss)})$ be given by \eqref{EQ2FSU}, in which $I=J$.  Then $(\tilde{\EE},\tilde{\FF})$ is a regular Dirichlet subspace of $(\EE,\FF)$ if and only if $(\tilde{\EE},\tilde{\FF})=(\EE^{(\tilde{\ss})}, \FF^{(\tilde{\ss})})$, where the scale function $\tilde{\ss}$ satisfies
\begin{equation}\label{EQ3DSD}
	d\tilde{\ss}\ll d\ss,\quad \frac{d\tilde{\ss}}{d\ss}=0\text{ or }1,\quad d\ss\text{-a.e.}
\end{equation}
\end{lemma}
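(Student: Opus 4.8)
The plan is to prove both implications separately, using the structure of $\FF^{(\ss)}$ in terms of $d\ss$ together with the characterization of regular Dirichlet subspaces of the form $(\frac12\mathbf{D},H^1(J))$ already established in \cite{FFY05,FHY10}. The key observation is that a regular, irreducible, strongly local Dirichlet form on an open interval is always a \emph{scale-changed} copy of one-dimensional Brownian motion: writing $\ss\in\SS(J)$ for the scale function of $(\EE,\FF)$, the map $u\mapsto u\circ\ss^{-1}$ is a unitary-type isomorphism carrying $(\EE^{(\ss)},\FF^{(\ss)})$ on $L^2(J,m_J)$ onto the Dirichlet form $(\frac12\mathbf{D},H^1(\ss(J)))$ on $L^2(\ss(J),m_J\circ\ss^{-1})$ (with the appropriate Dirichlet boundary condition when (L0)/(R0) holds), and it sends regular Dirichlet subspaces to regular Dirichlet subspaces because both the $\EE$-norm and the uniform norm are preserved. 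So the first step is to transport the problem through $\ss$ to the natural-scale setting, where the answer is known: the regular Dirichlet subspaces of Brownian motion on an interval are precisely the $(\EE^{(\mathtt r)},\FF^{(\mathtt r)})$ for scale functions $\mathtt r$ with $d\mathtt r\ll dx$ and $d\mathtt r/dx\in\{0,1\}$ a.e.

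For the ``if'' direction, suppose $\tilde\ss$ satisfies \eqref{EQ3DSD}. Then $d\tilde\ss\ll d\ss$ forces $\tilde\ss$ to be of the form $\tilde\ss=\rho\circ\ss$ with $\rho$ absolutely continuous and $\rho'\in\{0,1\}$ $d x$-a.e. after the change of variable; hence $(\EE^{(\tilde\ss)},\FF^{(\tilde\ss)})$ transports to the Dirichlet subspace of $(\frac12\mathbf D,H^1(\ss(J)))$ determined by $\rho$, which is regular by \cite{FFY05}. One then has to check two routine points: that $\FF^{(\tilde\ss)}\subset\FF^{(\ss)}$ (if $u\ll\tilde\ss$ then $u\ll\ss$, since $d\tilde\ss\ll d\ss$, and $du/d\ss=(du/d\tilde\ss)(d\tilde\ss/d\ss)$, so $\int(du/d\ss)^2d\ss=\int_{\{d\tilde\ss/d\ss=1\}}(du/d\tilde\ss)^2d\tilde\ss\le\EE^{(\tilde\ss)}(u,u)<\infty$, and the $L^2$ and boundary conditions are inherited), and that the energies agree, $\EE^{(\ss)}(u,v)=\EE^{(\tilde\ss)}(u,v)$ for $u,v\in\FF^{(\tilde\ss)}$ — this is the same computation, because $d\tilde\ss/d\ss$ is an indicator. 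Regularity of $(\EE^{(\tilde\ss)},\FF^{(\tilde\ss)})$ on $L^2(J,m_J)$ itself follows from Theorem~\ref{THM21} (or directly from \cite[Theorem~2.2.11]{CF12}) provided $\tilde\ss\in\SS_\infty(J)$; one should remark that the adaptedness of $\tilde\ss$ relative to $J$ may fail even when $\ss$ is adapted, but since here $I=J$ is open the adapted conditions \textbf{(A)},\textbf{(B)} are vacuous, so no issue arises.

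For the ``only if'' direction, let $(\tilde\EE,\tilde\FF)$ be a regular Dirichlet subspace of $(\EE^{(\ss)},\FF^{(\ss)})$. Transport it through $\ss$ to a regular Dirichlet subspace of Brownian motion on $\ss(J)$; by the cited characterization it equals $(\EE^{(\mathtt r)},\FF^{(\mathtt r)})$ for a scale function $\mathtt r$ on $\ss(J)$ with $d\mathtt r\ll dx$ and $d\mathtt r/dx\in\{0,1\}$ $dx$-a.e. Transporting back, $(\tilde\EE,\tilde\FF)=(\EE^{(\tilde\ss)},\FF^{(\tilde\ss)})$ with $\tilde\ss:=\mathtt r\circ\ss$; unwinding the change of variables gives $d\tilde\ss\ll d\ss$ and $d\tilde\ss/d\ss\in\{0,1\}$ $d\ss$-a.e., which is \eqref{EQ3DSD}. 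The step I expect to be the main obstacle — and the reason the paper says ``we need a little more explanation for its proof'' — is making the reduction to natural scale fully rigorous when one of the allowable boundary situations (L0)/(R0) is present: there the Dirichlet form on $J$ is not the free Brownian motion but the absorbed one, so one must verify that the $\ss$-transport still sends \emph{its} regular Dirichlet subspaces bijectively onto those of the absorbed Brownian motion on $\ss(J)$, i.e. that the Dirichlet boundary condition $u(a)=0$ (resp. $u(b)=0$) is correctly matched under the change of scale and that regularity (density of $\tilde\FF\cap C_c(J)$ in $C_c(J)$) is not destroyed. This is handled by noting that adding the boundary point as an absorbing endpoint is the same operation on both sides of the scale change, and that $C_c$-density transfers because $\ss$ is a homeomorphism of $J$ onto $\ss(J)$; everything else is the bookkeeping of the chain rule $\frac{d(g\circ\ss)}{d\ss}=g'\circ\ss$ already recorded in the Notations.
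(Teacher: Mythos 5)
Your ``if'' direction is essentially correct and complete (the chain-rule computation, the inheritance of the Dirichlet boundary condition, and the appeal to \cite[Theorem~2.2.11]{CF12} for regularity of $(\EE^{(\tilde{\ss})},\FF^{(\tilde{\ss})})$ all work), and the scale-transport $u\mapsto u\circ\ss^{-1}$ is indeed an isomorphism of Dirichlet structures that preserves regular Dirichlet subspaces. The problems are in the ``only if'' direction, which is where the content of Lemma~\ref{LM31} lives. First, after transporting through $\ss$ the ambient form is the natural-scale form on $L^2(\ss(J),\,m_J\circ\ss^{-1})$, and $m_J\circ\ss^{-1}$ is a general fully supported Radon measure, not Lebesgue measure; the characterization you cite from \cite{FFY05} is proved for $(\tfrac12\mathbf{D},H^1(I))$ on $L^2(I,dx)$ and does not literally apply. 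The statement for a general speed measure (and a general scale) is precisely \cite[Theorem~4.1]{FHY10}, which the paper invokes directly without any change of scale — so your reduction is either circular or requires re-proving the \cite{FFY05} characterization with an arbitrary speed measure (and the regularity condition, being a density statement in the $\EE_1^{1/2}$-norm, does see the measure).

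Second, you have misidentified the ``main obstacle''. It is not the boundary situations (L0)/(R0). The point the paper flags as needing ``a little more explanation'' is that before the characterization theorem can be applied to an arbitrary regular Dirichlet subspace $(\tilde{\EE},\tilde{\FF})$, one must know that $(\tilde{\EE},\tilde{\FF})$ is itself strongly local and that its associated diffusion is irreducible in the sense of \eqref{EQ2PXY} --- i.e.\ that it is again of the form \eqref{EQ2FSU} for \emph{some} scale function, which the theorem then pins down as satisfying \eqref{EQ3DSD}. Neither fact is automatic: strong locality of a regular Dirichlet subspace of a strongly local form is a genuine theorem (\cite[Theorem~2.1]{LY14}), and irreducibility of the subspace is supplied by \cite[Propositions~2.2 and 2.3]{LY16}. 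Your proposal assumes both implicitly at the step ``by the cited characterization it equals $(\EE^{(\mathtt{r})},\FF^{(\mathtt{r})})$ for a scale function $\mathtt{r}$''. Supplying these two facts (or verifying that the version of the \cite{FFY05}/\cite{FHY10} characterization you rely on already builds them into its hypotheses) is exactly what is missing from the argument.
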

\begin{proof}
Before applying Theorem~4.1 of \cite{FHY10}, we need to explain the following fact: if $(\tilde{\EE},\tilde{\FF})$ is a regular Dirichlet subspace of $(\EE,\FF)$, then $(\tilde{\EE},\tilde{\FF})$ is strongly local and its associated diffusion process $\tilde{X}$ is irreducible in the sense of \eqref{EQ2PXY}. The strongly local property of $(\tilde{\EE},\tilde{\FF})$ follows from Theorem~2.1 of \cite{LY14}. Propositions 2.2 and 2.3 of \cite{LY16} imply the irreducibility of $\tilde{X}$. The rest of the proof is clear.
\end{proof}

\subsection{Irreducible case}\label{SEC32}

The irreducible case of $I=J$, where $J$ is an open interval, is studied in \cite{FL15}. Let $(\EE,\FF)$ be the Dirichlet form on $L^2(J,m_J)$ in Lemma~\ref{LM31} and $\ss$ its scale function. The inverse function of $\ss$ is denoted by $\tt$, i.e. $\tt=\ss^{-1}$. Then \eqref{EQ3CCI} is equivalent to the condition (Cf. \cite[Lemma~2.1]{FL15})
\begin{equation}\label{EQ3TIA}
	\tt\text{ is absolutely continuous, and }\tt'\in L^2_\text{loc}.
\end{equation}
This assumption does not necessarily imply $\ss$ is absolutely continuous, but we may write the Lebesgue decomposition of $d\ss$ as follows:
\[
	d\ss=g(x)dx+\kappa,\quad \kappa \perp dx,
\]
where $g\in L^1_\mathrm{loc}(J)$. Note that $g>0$ a.e. In fact, let $Z_g:=\{x\in J\setminus H: g(x)=0\}$, where $H$ is the support of $\kappa$. Clearly, $|H|=0$ and $d\ss(Z_g)=\int_{Z_g}g(x)dx=0$. Thus we must have $|Z_g|=0$ since $dx\ll d\ss$. From this fact, we can define the absolutely continuous part of $\ss$ as
\[
\bar{\ss}(x):=\int_e^x g(y)dy,
\]
where $e$ is a fixed point in $J$. Clearly, $\bar{\ss}$ is a scale function on $J$, i.e. a strictly increasing and continuous function on $J$.

Assume \eqref{EQ3CCI} and denote the Dirichlet form induced by the closure of $C_c^\infty(J)$ by $(\bar{\EE},\bar{\FF})$. From Lemma~\ref{LM31}, we know that  $(\bar{\EE},\bar{\FF})$ corresponds to a diffusion process with some scale function $\tilde{\ss}$ satisfying \eqref{EQ3DSD}. The following result is taken from \cite[Theorem~2.2]{FL15}, which shows that $\tilde{\ss}=\bar{\ss}$.

\begin{theorem}\label{THM32}
Let $(\EE,\FF)$ be in Lemma~\ref{LM31} and assume \eqref{EQ3CCI}. Then the absolutely continuous part $\bar{\ss}$ of $\ss$ is the scale function of regular Dirichlet form $(\bar{\EE},\bar{\FF})$. Particularly, $C_c^\infty(J)$ is a special standard core of $(\EE,\FF)$ if and only if $\ss$ is absolutely continuous.
\end{theorem}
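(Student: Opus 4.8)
The plan is to establish the identity $\tilde{\ss}=\bar{\ss}$ by a double inclusion between the associated Dirichlet forms $(\EE^{(\tilde{\ss})},\FF^{(\tilde{\ss})})$ and $(\EE^{(\bar{\ss})},\FF^{(\bar{\ss})})$, using Lemma~\ref{LM31} together with the characterization \eqref{EQ3DSD}. First I would observe that, since $dx\ll d\ss$ and the Lebesgue decomposition gives $d\ss = g\,dx+\kappa$ with $g>0$ a.e.\ (as shown just before the statement), we have $d\bar{\ss}=g\,dx\ll d\ss$ and $d\bar{\ss}/d\ss = g/g = 1$ on the set $\{g>0\}\setminus H$ (which is of full $d\ss$-measure modulo where $g=0$), hence $d\bar{\ss}/d\ss\in\{0,1\}$ $d\ss$-a.e.; so $(\EE^{(\bar{\ss})},\FF^{(\bar{\ss})})$ is a \emph{bona fide} regular Dirichlet subspace of $(\EE,\FF)$ by Lemma~\ref{LM31}. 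Next I would check directly that $C_c^\infty(J)\subset\FF^{(\bar{\ss})}$: for $\varphi\in C_c^\infty(J)$ we have $\varphi\ll\bar{\ss}$ with $d\varphi/d\bar{\ss} = \varphi'/g$, and $\int_J(\varphi'/g)^2\,d\bar{\ss} = \int_J (\varphi')^2/g\,dx$; using $\tt=\ss^{-1}$ and the hypothesis \eqref{EQ3TIA} that $\tt'\in L^2_{\mathrm{loc}}$, together with $g = 1/(\tt'\circ\ss)$ in the appropriate sense (i.e.\ the absolutely continuous part of $\ss$ is the inverse, up to the singular part, of the absolutely continuous $\tt$), this integral is finite on the compact support of $\varphi$. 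Since $(\EE^{(\bar{\ss})},\FF^{(\bar{\ss})})$ is a closed form containing $C_c^\infty(J)$ and contained in $(\EE,\FF)$, its closure of $C_c^\infty(J)$ — which is by definition $(\bar{\EE},\bar{\FF})$ — is contained in it, giving $\bar{\FF}\subset\FF^{(\bar{\ss})}$, i.e.\ $d\tilde{\ss}\ll d\bar{\ss}$ (equivalently $\tilde{\ss}\le\bar{\ss}$ in the sense of \eqref{EQ3DSD}).

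For the reverse inclusion I would argue that $d\bar{\ss}\ll d\tilde{\ss}$, i.e.\ that the ``set where $d\tilde{\ss}/d\ss = 0$'' is $dx$-null. Suppose not; then there is a set $A$ of positive Lebesgue measure on which $d\tilde{\ss}=0$ but $g>0$. On such a set one can build, using the flexibility of the Cantor-type constructions already invoked in the sufficiency proof of Theorem~\ref{THM21} (or more directly the argument in \cite[Theorem~2.2]{FL15}), a smooth compactly supported function $\varphi$ whose $\tilde{\ss}$-energy is strictly smaller than its $\bar{\ss}$-energy — contradicting that $\varphi\in\bar{\FF}$ must have $\bar{\EE}(\varphi,\varphi) = \EE(\varphi,\varphi)$ computed with the full $d\ss$, not $d\tilde{\ss}$. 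More cleanly: for any $\varphi\in C_c^\infty(J)$ we have $\EE(\varphi,\varphi) = \tfrac12\int (\varphi')^2/g\,dx$ depending only on $g$, hence only on $\bar{\ss}$; and since $\bar{\FF}$ is the $\EE_1$-closure of $C_c^\infty(J)$, every $u\in\bar{\FF}$ satisfies $u\ll\bar{\ss}$ with $du/d\bar{\ss}\in L^2(J,d\bar{\ss})$ (the $\bar{\ss}$-energy is continuous along $\EE_1$-Cauchy sequences because it coincides with $\EE$ on $C_c^\infty(J)$ and hence on $\bar{\FF}$). This forces $\bar{\FF}\subset\FF^{(\bar{\ss})}$ \emph{with matching energies} and, combined with the first inclusion, pins down $\tilde{\ss}=\bar{\ss}$ by the uniqueness part of Lemma~\ref{LM31}.

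Finally, for the ``particularly'' clause: $C_c^\infty(J)$ is a special standard core of $(\EE,\FF)$ exactly when $\bar{\FF}=\FF$, i.e.\ when $(\bar{\EE},\bar{\FF}) = (\EE,\FF)$, which by the just-proved identification and Lemma~\ref{LM31} happens if and only if $\bar{\ss}=\ss$, i.e.\ the singular part $\kappa$ vanishes, i.e.\ $\ss$ is absolutely continuous. (One should note that under \eqref{EQ3CCI}, $C_c^\infty(J)$ is automatically a \emph{core}; the content is that it is dense, equivalently special standard, precisely under absolute continuity of $\ss$.)

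I expect the main obstacle to be the reverse inclusion $d\bar{\ss}\ll d\tilde{\ss}$ --- more precisely, showing that passing to the $\EE_1$-closure of $C_c^\infty(J)$ cannot ``lose'' any of the absolutely continuous part of $d\ss$. The subtlety is that $\bar{\FF}$ is defined as an abstract closure, and one must show its energy measure is exactly $d\bar{\ss}$ and not something strictly smaller; the cleanest route is the observation that $\EE(\varphi,\varphi)$ for smooth $\varphi$ involves only $g$, so the $\bar{\ss}$-Dirichlet norm is $\EE_1$-continuous on $C_c^\infty(J)$, which transfers the full $\bar{\ss}$-energy to the whole closure. Everything else is bookkeeping with the Lebesgue decomposition and the correspondence between $\ss$, $\tt$, and $g$, which I would keep brief by citing \cite[\S2.2.3]{CF12} and \cite[Lemma~2.1]{FL15}.
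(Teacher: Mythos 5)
The paper does not prove Theorem~\ref{THM32} at all: it is quoted from \cite[Theorem~2.2]{FL15}, so there is no in-paper proof to compare against. Your double-domination strategy is the right one and, as far as I can tell, is essentially the argument of the cited source: (i) $d\bar{\ss}\ll d\ss$ with $\{0,1\}$-valued density, plus $1/g=\tt'\circ\ss\in L^1_{\mathrm{loc}}$ from \eqref{EQ3TIA}, shows $(\EE^{(\bar{\ss})},\FF^{(\bar{\ss})})$ is a closed regular Dirichlet subspace containing $C_c^\infty(J)$ with matching energies, whence $\bar{\FF}\subset\FF^{(\bar{\ss})}$; (ii) one then needs $d\bar{\ss}\ll d\tilde{\ss}$.

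The one soft spot is step (ii). Your ``more cleanly'' paragraph does not actually deliver the reverse domination: it only re-derives $\bar{\FF}\subset\FF^{(\bar{\ss})}$ with matching energies, i.e.\ that $(\bar{\EE},\bar{\FF})$ is a regular Dirichlet subspace of $(\EE^{(\bar{\ss})},\FF^{(\bar{\ss})})$, which by Lemma~\ref{LM31} only gives $d\tilde{\ss}\ll d\bar{\ss}$ again; there is no ``uniqueness part'' of Lemma~\ref{LM31} that upgrades this to equality. Your first, contradiction-based sketch contains the correct idea, and it can be made precise in one line without any Cantor-type construction: writing $d\tilde{\ss}=1_B\,g\,dx$ (possible since $d\tilde{\ss}\ll d\ss$ with $\{0,1\}$ density and $|H|=0$), the inclusion $C_c^\infty(J)\subset\bar{\FF}=\FF^{(\tilde{\ss})}$ forces $\varphi\ll\tilde{\ss}$ for every smooth $\varphi$, hence $\varphi'\,dx\ll d\tilde{\ss}$; taking $\varphi$ with $\varphi'\equiv 1$ on an arbitrary compact subinterval yields $dx\ll d\tilde{\ss}$ (this is just \eqref{EQ3TIA} applied to $\tilde{\ss}$), so $d\bar{\ss}=g\,dx\ll d\tilde{\ss}\ll d\bar{\ss}$ and the $\{0,1\}$-valued density must equal $1$, giving $\tilde{\ss}=\bar{\ss}$ after the common normalization at $e$. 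With that substitution the proof is complete, and the ``particularly'' clause follows exactly as you state.
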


\subsection{General case} Now let us consider $I=\mathbb{R}$, $m$ a fully supported Radon measure on $\mathbb{R}$ and $(\EE,\FF)$ a regular and strongly local Dirichlet form on $L^2(\mathbb{R},m)$ whose effective intervals are $\{(I_n,\ss_n):n\geq 1\}$. For each $n$, $e_n$ is a fixed point in the interior of $I_n$. The representation of $(\EE,\FF)$ is given in Theorem~\ref{THM21}. We shall point out that the assumption $I=\mathbb{R}$ is not essential, see Remark~\ref{RM39}.

We first analyse the basic assumption \eqref{EQ3CCI}. The following result follows from the fact that $C_c^\infty(\mathbb{R})\subset \FF$ if and only if {$f\in \FF_\mathrm{loc}$ for $f(x):=x$}.

\begin{lemma}\label{LM33}
Let $(\EE,\FF)$ be a regular and strongly local Dirichlet form on $L^2(\mathbb{R},m)$ with the effective intervals $\{(I_n,\ss_n):n\geq 1\}$. Then $C_c^\infty(\mathbb{R})\subset \FF$ is equivalent to the following: $dx\ll d\ss_n$ on $I_n$ for any $n\geq 1$ and for any constant $L>0$,
\begin{equation}\label{EQ3NIL}
	\sum_{n\geq 1}\int_{I_n\bigcap (-L, L)} \left(\frac{dx}{d\ss_n}\right)^2d\ss_n<\infty.
\end{equation}
\end{lemma}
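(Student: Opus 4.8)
The plan is to reduce the statement to a pointwise/local condition on the function $f(x) := x$ and then translate that condition back to the effective intervals via the representation \eqref{EQ2FULR}. First I would observe that $C_c^\infty(\mathbb{R}) \subset \FF$ is equivalent to $f \in \FF_{\mathrm{loc}}$ for $f(x) = x$ (this is the fact already invoked in the sentence preceding the lemma): indeed, if $f \in \FF_{\mathrm{loc}}$ then every $\varphi \in C_c^\infty(\mathbb{R})$, being a smooth function of $f$ with compact support, lies in $\FF$ by the chain rule for strongly local forms (\cite[Theorem~3.1]{FHY10} or the standard derivation property); conversely, if $C_c^\infty(\mathbb{R}) \subset \FF$ then on any bounded open interval $(-L,L)$ we can find $\varphi \in C_c^\infty(\mathbb{R})$ with $\varphi(x) = x$ there, so $f$ agrees on $(-L,L)$ with a function in $\FF$, i.e. $f \in \FF_{\mathrm{loc}}$.

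Next I would unwind what $f \in \FF_{\mathrm{loc}}$ means in terms of the representation \eqref{EQ2FULR} of $(\EE,\FF)$. By definition of $\FF_{\mathrm{loc}}$ and the local character of $\FF$, $f \in \FF_{\mathrm{loc}}$ iff for every $L$ there is $u \in \FF$ with $u = f$ on $(-L,L)$; using \eqref{EQ2FULR}, this holds iff $f|_{I_n} \in \FF^{(\ss_n)}$ on each $I_n$ (at least for the $I_n$ meeting $(-L,L)$) together with the summability $\sum_{n} \EE^{(\ss_n)}(f|_{I_n \cap (-L,L)}, f|_{I_n \cap (-L,L)}) < \infty$. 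Now $f|_{I_n}(x) = x$ is absolutely continuous with ordinary derivative $1$; the condition $f|_{I_n} \ll \ss_n$ with $\frac{df|_{I_n}}{d\ss_n} \in L^2_{\mathrm{loc}}(I_n, d\ss_n)$ is exactly the requirement that $x = g(\ss_n(x))$ for some locally absolutely continuous $g$, which by a change of variables is equivalent to $dx \ll d\ss_n$ on $I_n$ with Radon–Nikodym derivative $\frac{dx}{d\ss_n} = (g' \circ \ss_n) \in L^2_{\mathrm{loc}}$, and then $\frac{df|_{I_n}}{d\ss_n} = \frac{dx}{d\ss_n}$. Feeding this into the energy gives $\EE^{(\ss_n)}(f|_{I_n \cap (-L,L)}, f|_{I_n \cap (-L,L)}) = \tfrac12 \int_{I_n \cap (-L,L)} \bigl(\frac{dx}{d\ss_n}\bigr)^2 d\ss_n$, so the summability condition becomes precisely \eqref{EQ3NIL}.

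One point requiring a little care is the boundary conditions built into $\FF^{(\ss_n)}$ in \eqref{EQ2FSU2}, namely the Dirichlet condition at an endpoint when $(\mathrm{L}_R)$ or $(\mathrm{R}_R)$ holds. Here I would note that since $f(x) = x$ need not vanish at a finite endpoint, we must work with $\FF_{\mathrm{loc}}$ rather than $\FF$ itself: the local membership only tests $f$ against compactly supported modifications, so on an effective interval whose relevant endpoint is "at infinity" (which is exactly where $(\mathrm{L}_R)$ or $(\mathrm{R}_R)$ can occur) the Dirichlet constraint is automatically satisfied by any compactly supported adjustment of $f$ near that endpoint, and on an effective interval with a finite endpoint the conditions $(\mathrm{L}_R)$/$(\mathrm{R}_R)$ cannot hold. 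Thus the boundary conditions impose no additional restriction, and the two conditions "$dx \ll d\ss_n$ on each $I_n$" and \eqref{EQ3NIL} together are exactly equivalent to $f \in \FF_{\mathrm{loc}}$, hence to $C_c^\infty(\mathbb{R}) \subset \FF$.

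The main obstacle I anticipate is the careful bookkeeping in the second paragraph: making precise the equivalence between $f \in \FF_{\mathrm{loc}}$ and the combination of local membership $f|_{I_n} \in \FF^{(\ss_n)}_{\mathrm{loc}}$ plus the uniform-in-$L$ energy summability \eqref{EQ3NIL}, especially handling countably many effective intervals accumulating at a finite point (so that infinitely many $I_n$ can meet a bounded interval $(-L,L)$) and checking that the $L^2$ condition $\frac{dx}{d\ss_n} \in L^2_{\mathrm{loc}}$ on each fixed $I_n$ is subsumed by \eqref{EQ3NIL}. The change-of-variables identification $\int_{I_n} (\frac{dx}{d\ss_n})^2 d\ss_n = \int_{I_n} \frac{dx}{d\ss_n}\, dx$ and the equivalence of "$f|_{I_n} \ll \ss_n$ with square-integrable derivative" with "$dx \ll d\ss_n$" are routine once one recalls the correspondence $u \ll \ss \Leftrightarrow u = g \circ \ss$ with $g$ absolutely continuous from the Notations section.
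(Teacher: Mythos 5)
Your proposal is correct and follows essentially the same route as the paper: reduce $C_c^\infty(\mathbb{R})\subset\FF$ to $f\in\FF_{\mathrm{loc}}$ for $f(x)=x$, translate that membership through the representation \eqref{EQ2FULR} into $dx\ll d\ss_n$ plus the energy bound \eqref{EQ3NIL}, and recover $\varphi\in\FF$ for a general test function from the estimate $\EE^{(\ss_n)}(\varphi|_{I_n},\varphi|_{I_n})\leq \tfrac{1}{2}\|\varphi'\|_\infty^2\int_{I_n\cap(-L,L)}\bigl(\tfrac{dx}{d\ss_n}\bigr)^2d\ss_n$. You simply spell out in more detail the identification of "$f\in\FF_{\mathrm{loc}}$" with the stated condition (including the boundary-condition check), which the paper dismisses as clear.
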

\begin{proof}
This equivalent condition \eqref{EQ3NIL} is clearly a concrete expression of $f\in \FF_\mathrm{loc}$ for {$f(x):=x$}. If $C_c^\infty(\mathbb{R})\subset \FF$, then for any $L>0$ we can take a function $f^L\in C_c^\infty(\mathbb{R})$ such that $f^L(x)=f(x)$ for any $x\in (-L, L)$. This implies $f\in \FF_\mathrm{loc}$. On the contrary, assume $f\in \FF_\mathrm{loc}$ and take $\varphi \in C_c^\infty(\mathbb{R})$ with $\text{supp}[\varphi]\subset (-L,L)$. Then it follows from \eqref{EQ3NIL} that $\varphi|_{I_n}\ll \ss_n$ and
\[
\sum_{n\geq 1} \EE^{(\ss_n)}(\varphi|_{I_n}, \varphi|_{I_n})\leq \frac{\|\varphi'\|_\infty^2}{2}\sum_{n\geq 1}\int_{I_n\bigcap (-L, L)} \left(\frac{dx}{d\ss_n}\right)^2d\ss_n<\infty.
\]
Thus $\varphi\in \FF$.
That completes the proof.
\end{proof}

\begin{remark}\label{RM34}
We can deduce several corollaries from \eqref{EQ3NIL}.
\begin{itemize}
\item[(1)] For each $n\geq 1$, the inverse $\tt_n:=\ss_n^{-1}$ of $\ss_n$ is absolutely continuous.
\item[(2)] If $I_n$ is an internal interval (equivalently, a finite interval), then there exists a constant $L>0$ such that $I_n\subset (-L,L)$. Thus
\[
	\int_{\ss_n(I_n)}\tt'_n(y)^2dy<\infty.
\]
\item[(3)] If $I_n=\langle a_n,b_n\rangle$ is an infinite interval, i.e. $a_n=-\infty$ or $b_n=\infty$, then $\tt'_n$ is only a $L^2_\mathrm{loc}$-function in its domain.
\item[(4)] If in addition each $\ss_n$ is absolutely continuous, then \eqref{EQ3NIL} is the same as
\[
	\sum_{n\geq 1}\int_{I_n\bigcap (-L, L)} \frac{1}{\ss'_n(x)} dx<\infty.
\]
\end{itemize}
\end{remark}

We are now in a position to answer when $\bar{\FF}=\FF$. For this purpose, we need to give some definitions. Define two measures on $\mathbb{R}$:
\begin{equation}\label{EQ3SNS}
	\lambda_\ss:=\sum_{n\geq 1} d\ss_n|_{I_n},\quad \lambda_l:=dx|_{\left(\bigcup_{n\geq 1}I_n\right)^c}.
\end{equation}
In other words, $\lambda_\ss$ is the sum of all measures assciated to scale functions $\{\ss_n: n\geq 1\}$ on effective intervals, not necessarily Radon, and $\lambda_l$ is the restriction of Lebesgue measure to the rest part. Recall that $e_n$ is a fixed point in the interior of $I_n$ for $n\geq 1$.

\begin{definition}\label{DEF35}
For two effective intervals $I_i$ and $I_j$, we say that $I_i$ and $I_j$ are \emph{scale-connected}, or $I_i$ is scale-connected to $I_j$, with respect to $\{\ss_n: n\geq 1\}$, if
\begin{equation}\label{EQ3LSE}
	\lambda_\ss([e_i,e_j])<\infty,\quad \lambda_l([e_i,e_j])=0,
\end{equation}
where $[e_i,e_j]:=[e_j,e_i]$ if $e_j<e_i$. If $I_i$ is not scale-connected to any other effective interval, we say that it is \emph{scale-isolated}.
\end{definition}

We write $I_i\overset{\ss}{\sim} I_j$ to represent that $I_i$ is scale-connected to $I_j$. Then one may easily check that $I_i\overset{\ss}{\sim} I_i$, $I_i\overset{\ss}{\sim} I_j$ implies $I_j\overset{\ss}{\sim} I_i$, $I_i\overset{\ss}{\sim} I_j$ and $I_j\overset{\ss}{\sim} I_k$ imply $I_i\overset{\ss}{\sim} I_k$. In other words, the scale-connection is an equivalence relation on all effective intervals $\{I_n:n\geq 1\}$.

\begin{remark}\label{RM36}
Let $I_i$ and $I_j$ be two scale-connected effective intervals and $b_i< a_j$ (notice that $b_i=a_j$ is impossible). Intuitively, $I_i$ is on the left side of $I_j$.
\begin{itemize}
\item[(1)] $\lambda_\ss([e_i,e_j])<\infty$ implies there are no open endpoints between $I_i$ and $I_j$. More precisely, all the effective intervals between $I_i$ and $I_j$ are closed and $b_i$ and $a_j$ are also closed endpoint. However, $a_i$ or $b_j$ is not necessarily closed, see Example~\ref{EXA38}.
\item[(2)] The first remark and $\lambda_l([e_i,e_j])=0$ indicate the effective intervals between $I_i$ and $I_j$ have a Cantor-type structure, see (for example) Example~\ref{EXA38}~(2, 3, 4).
\item[(3)] If $I_k$ is an effective interval between $I_i$ and $I_j$, then $I_k$ is scale-connected to $I_i$ and $I_j$. Thus roughly speaking, the equivalence class of scale-connection separates all the effective intervals into several `continuous' clusters.
\end{itemize}
\end{remark}

\begin{theorem}\label{THM37}
Let $(\EE,\FF)$ be a strongly local and regular Dirichlet form on $L^2(\mathbb{R},m)$ whose effective intervals are $\{(I_n, \ss_n): n\geq 1\}$ and assume $C_c^\infty(\mathbb{R})\subset \FF$. Then $C_c^\infty(\mathbb{R})$ is a special standard core of $(\EE,\FF)$ if and only if the following conditions hold:
\begin{itemize}
\item[(1)] $\ss_n$ is absolutely continuous for any $n\geq 1$;
\item[(2)] for any $i$, $I_i$ is scale-isolated with respect to $\{\ss_n: n\geq 1\}$.
\end{itemize}
\end{theorem}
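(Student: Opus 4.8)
The plan is to reduce the statement to the irreducible case (Theorem~\ref{THM32}) on each "scale-connected cluster" and then handle the interaction between clusters. First I would establish necessity. Suppose $C_c^\infty(\mathbb{R})$ is a special standard core, i.e. $\bar{\FF}=\FF$. Fix an effective interval $I_n$ and note that $I_n$ is an invariant set of the diffusion $X$; restricting $(\EE,\FF)$ to $I_n$ and using that the restriction of $C_c^\infty(\mathbb{R})$ to (the interior of) $I_n$ is dense in $C_c^\infty(\mathring I_n)$, one sees that $C_c^\infty(\mathring I_n)$ must be a special standard core of $(\EE^{(\ss_n)},\FF^{(\ss_n)})$ on $L^2(\mathring I_n, m_n)$. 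By Theorem~\ref{THM32} this forces $\ss_n$ to be absolutely continuous, giving condition~(1). For condition~(2), suppose to the contrary that $I_i\overset{\ss}{\sim} I_j$ with $i\neq j$, say $b_i<a_j$. Then by Remark~\ref{RM36} the Lebesgue part $\lambda_l$ vanishes on $[e_i,e_j]$ and $\lambda_\ss([e_i,e_j])<\infty$, so one can construct a function $u\in\FF$ that equals $0$ on $I_i$ near $b_i$ and equals a nonzero constant on $I_j$ near $a_j$, of finite energy. I would argue that any $\phi\in C_c^\infty(\mathbb{R})$ must satisfy a "connectedness" constraint across $[b_i,a_j]$ — intuitively, smoothness plus $dx\ll d\ss_k$ on each $I_k$ forces the values of $\phi$ on the two clusters to be linked through an integral of $\phi'$, so $\phi$ cannot approximate $u$ in $\EE_1$-norm. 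Making this precise is the crux of necessity; I expect to produce an explicit bounded linear functional on $\FF$ (something like the "jump of the absolutely continuous representative across the gap") that vanishes on $C_c^\infty(\mathbb{R})$ but not on $u$, contradicting $\bar{\FF}=\FF$.

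For sufficiency, assume (1) and (2). The strategy is to decompose $\bigcup_n I_n$ into scale-connected equivalence classes (clusters) $\{J_\alpha\}$; by Remark~\ref{RM36}(3) each cluster is, up to a Lebesgue-null Cantor-type set of gaps, an interval, and within a cluster all the gap structure carries zero $\lambda_l$-mass with $\lambda_\ss$ locally finite. On each cluster $J_\alpha$ I would glue the scale functions $\{\ss_n: I_n\subset J_\alpha\}$ across the (closed) bridging endpoints into a single absolutely continuous scale function $\hat\ss_\alpha$ on the closure of $J_\alpha$, exactly as the measure $\lambda_\ss$ restricted to $J_\alpha$ prescribes; this is legitimate because (1) makes each $\ss_n$ absolutely continuous and the scale-connectedness makes the bridges closed with finite $\lambda_\ss$-mass. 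The point is that the restriction of $(\EE,\FF)$ to $J_\alpha$ then looks like a single irreducible strongly local form governed by $\hat\ss_\alpha$, so Theorem~\ref{THM32} applies: $C_c^\infty$ on the interior of the cluster is a special standard core there.

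The remaining work is to assemble these cluster-wise cores into a global core on $\mathbb{R}$. Because each $I_i$ is scale-isolated, distinct clusters are separated either by an open endpoint of some $I_n$ (an unapproachable boundary, where $\ss_n\to\pm\infty$) or by a set carrying positive $\lambda_l$-mass (genuine Lebesgue "dead zone"). In the first case the diffusion cannot cross, and a $C_c^\infty$ function can be taken to vanish identically near that boundary at no energy cost because $du/d\ss_n\in L^2$ with $\ss_n$ unbounded kills any nonzero limiting value; in the second case the form on the dead zone is just $\int u^2\,dm$ (zero energy), so one freely interpolates with a smooth function of small $L^2$-error and small (in fact, controllably small) energy, much as in Lemma~\ref{LM39} and Remark~\ref{RM27}. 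Patching a finite collection of cluster-approximants together with smooth cutoffs, plus the tail estimate $\sum_{n>N}\EE^{(\ss_n)}(u|_{I_n},u|_{I_n})<\epsilon$, yields $\phi\in C_c^\infty(\mathbb{R})$ with $\EE_1(\phi-u,\phi-u)$ small for arbitrary $u\in\FF$; combined with the uniform-norm density of $C_c^\infty$-type functions (Stone--Weierstrass, as in the proof of Theorem~\ref{THM21}) this shows $C_c^\infty(\mathbb{R})$ is a special standard core.

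The main obstacle I anticipate is the necessity direction: proving rigorously that scale-connectedness of two effective intervals \emph{obstructs} $C_c^\infty(\mathbb{R})$ from being dense. This requires exhibiting the right obstruction functional and checking it is $\EE_1$-continuous on $\FF$ yet annihilates every smooth compactly supported function — essentially the probabilistic content that a smooth function sees the Cantor bridge between the two intervals as "connected" while a general $\FF$-function need not. A secondary technical nuisance is the careful bookkeeping when a cluster abuts an open endpoint of $I$ itself (the (L0)/(R0) situations), where one must confirm the Dirichlet boundary condition is compatible with the $C_c^\infty$ approximation; this is routine but must be stated. Remark~\ref{RM39}, promised in the text, presumably records that everything above localizes and so the hypothesis $I=\mathbb{R}$ costs no generality.
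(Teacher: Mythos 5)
Your necessity argument for condition (1) matches the paper's (restrict to the part form on $\mathring I_n$ and invoke Theorem~\ref{THM32}). For condition (2), however, you only promise an obstruction functional (``I expect to produce\dots'') without constructing it; this is precisely the crux, and the paper takes a different, complete route: it forms the part Dirichlet form $(\EE_J,\FF_J)$ on $J=(a_1,b_2)$, builds a single scale function $\ss_J$ from $\lambda_\ss|_J$ (which is Radon on $J$ exactly because of scale-connectedness), checks that $\ss_J$ and $\tt_J=\ss_J^{-1}$ are absolutely continuous with $\tt_J'\in L^2_{\mathrm{loc}}$, and verifies $\EE_J(\varphi,\varphi)=\EE^{(\ss_J)}(\varphi,\varphi)$ for $\varphi\in C_c^\infty(J)$; since both forms are then the closure of $C_c^\infty(J)$, they coincide, contradicting that $(\EE^{(\ss_J)},\FF^{(\ss_J)})$ is irreducible while $(\EE_J,\FF_J)$ is not. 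Your functional idea (for smooth $\varphi$ and $\lambda_l([e_i,e_j])=0$ one has $\varphi(e_j)-\varphi(e_i)=\sum_n\int_{I_n\cap[e_i,e_j]}\frac{d\varphi}{d\ss_n}\frac{dx}{d\ss_n}\,d\ss_n$, an identity that passes to the $\EE_1$-closure but fails for a function jumping between the two intervals) can be made rigorous, but as written it is a gap, not a proof.

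The more serious problem is sufficiency. Under hypothesis (2) every scale-connected cluster is a singleton, so your gluing step is vacuous and everything rests on the patching argument. Your dichotomy --- adjacent clusters are separated either by an open (unapproachable) endpoint or by a set of positive $\lambda_l$-mass --- is false: in Example~\ref{EXA38}~(3) all effective intervals are closed and $\lambda_l([e_i,e_j])=0$, yet every $I_i$ is scale-isolated because $\lambda_\ss([e_i,e_j])=\infty$ (infinitely many intermediate intervals each of $d\ss_n$-mass one). In that regime neither of your two mechanisms for making the transition energy small is available; moreover, infinitely many effective intervals can accumulate at the separation points, so ``patching a finite collection of cluster-approximants with smooth cutoffs'' does not go through: a smooth cutoff pays energy on all the accumulating intervals simultaneously, and controlling this requires a genuinely new estimate. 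The paper avoids direct approximation entirely: it applies Theorem~\ref{THM21} to the closure $(\bar{\EE},\bar{\FF})$ itself to get its effective intervals $\{(\bar I_n,\bar\ss_n)\}$, shows via Lemma~\ref{LM31} on each $\mathring I_{k_j}\subset \bar I_n$ and the energy identity for smooth functions that all the $I_{k_j}$ contained in a single $\bar I_n$ would be mutually scale-connected, and then uses scale-isolation to force $\bar I_n=I_k$, $\bar\ss_n=\ss_k$, whence $\bar{\FF}=\FF$. You would either need to adopt that comparison argument or supply an approximation scheme that actually covers the Cantor-type accumulation case.
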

\begin{proof}
We first prove the necessity. Assume that $C_c^\infty(\mathbb{R})$ is a special standard core of $(\EE,\FF)$.

For (1), fix $n$ and recall that $I_n=\langle a_n,b_n\rangle$. The interior of $I_n$ is denoted by $\mathring{I}_n=(a_n,b_n)$. Let us consider the part Dirichlet form $(\EE_{\mathring{I}_n}, \FF_{\\mathring{I}_n})$ of $(\EE,\FF)$ on $\mathring{I}_n$. {Clearly, $(\EE_{\mathring{I}_n}, \FF_{\mathring{I}_n})= (\EE^{(\ss_n)}_{\mathring{I}_n},\FF^{(\ss_n)}_{\mathring{I}_n})$, where the latter one is the part Dirichlet form of $(\EE^{(\ss_n)},\FF^{(\ss_n)})$ on $\mathring{I}_n$.} On the other hand, since $C_c^\infty(\mathbb{R})$ is a special standard core of $(\EE,\FF)$, it follows that $C_c^\infty(\mathring{I}_n)$ is a special standard core of $(\EE_{\mathring{I}_n}, \FF_{\mathring{I}_n})$. Thus from Theorem~\ref{THM32} we can obtain that $\ss_n$ is absolutely continuous.

For (2), suppose that $I_1=\langle a_1,b_1\rangle $ and $I_2=\langle a_2,b_2\rangle$ are scale-connected. Without loss of generality, further assume $b_1\leq a_2$.  Let us consider the part Dirichlet form $(\EE_J,\FF_J)$ of $(\EE,\FF)$ on $J=(a_1,b_2)$. Clearly, $C_c^\infty(J)$ is a special standard core of $(\EE_J,\FF_J)$ and $(\EE_J,\FF_J)$ is not irreducible. Take a fixed point $e\in J$. Since $I_1\overset{\ss}{\sim} I_2$, it follows that $\lambda_\ss|_J$ is a Radon measure on $J$. Denote the induced increasing function of $\lambda_\ss|_J$ by $\ss_J$:
\[
	\ss_J(x):=\int_e^x d\lambda_\ss,\quad x\in J.
\]
We assert that both $\ss_J$ and its inverse $\tt_J:=\ss_J^{-1}$ are absolutely continuous, and $\tt'_J\in L^2_\mathrm{loc}(\ss_J(J))$. In fact, the absolute continuity of $\ss_J$ follows from (1) and $d\ss_J=\sum_{n\geq 1}d\ss_n$ and the absolute continuity of $\tt_J$ follows from Remark~\ref{RM34}~(1) and $\lambda_l(J)=0$. Take any closed interval $K\subset \ss_J(J)$, it follows from Remark~\ref{RM34}~(4) that
\[
	\int_K \tt'_J(y)^2dy=\int_{\tt_J(K)} \tt'_J\circ \ss_J(x)^2d\ss_J(x)=\int_{\tt_J(K)} \frac{1}{\ss'_J(x)}dx<\infty.
\]
From this assertion, \eqref{EQ3TIA} and Theorem~\ref{THM32}, we can conclude that $C_c^\infty(J)$ is a special standard core of $(\EE^{(\ss_J)}, \FF^{(\ss_J)})$, where $(\EE^{(\ss_J)}, \FF^{(\ss_J)})$ is given by \eqref{EQ2FSU} with $I:=J, \ss:=\ss_J$.  Now we shall verify that $$\EE^{(\ss_J)}(\varphi, \varphi)=\EE_J(\varphi, \varphi)$$ for any $\varphi\in C_c^\infty(J)$. Indeed, $\varphi$ could be treated as a function in $C_c^\infty(\mathbb{R})$ with $\varphi|_{J^c}=0$. Then we have
\[
\begin{aligned}
	\EE_J(\varphi, \varphi)&= \EE(\varphi, \varphi) \\
	&=\frac{1}{2}\sum_{n\geq 1}\int_{I_n\cap J}\left(\frac{d\varphi}{d\ss_n}\right)^2d\ss_n(x) \\
	&=\frac{1}{2}\sum_{n\geq 1}\int_{I_n\cap J}\left(\varphi'(x)\right)^2\frac{1}{\ss'_n(x)}dx  \\
	&=\frac{1}{2}\int_{J}\left(\varphi'(x)\right)^2\frac{1}{\ss'_J(x)}dx \\
	&=\EE^{(\ss_J)}(\varphi,\varphi).
\end{aligned}\]
This implies that $(\EE^{(\ss_J)}, \FF^{(\ss_J)})=(\EE_J,\FF_J)$, which leads to a contradiction because  $(\EE^{(\ss_J)}, \FF^{(\ss_J)})$ is irreducible but $(\EE_J,\FF_J)$ is not irreducible.
Therefore, the necessity is proved.

Next, we turn to prove the sufficiency. Assume that (1) and (2) hold. Denote the closure of $C_c^\infty(\mathbb{R})$ in $(\EE,\FF)$ by $\bar{\FF}$ and set $\bar{\EE}(u,v):=\EE(u,v)$ for any $u,v\in \bar{\FF}$. Then $(\bar{\EE},\bar{\FF})$ is a regular and strongly local Dirichlet form on $L^2(\mathbb{R},m)$. Applying Theorem~\ref{THM21} to $(\bar{\EE},\bar{\FF})$, denote the effective intervals of $(\bar{\EE},\bar{\FF})$ by $\{(\bar{I}_n, \bar{\ss}_n): n\geq 1\}$. Write $\bar{I}_n=\langle \bar{a}_n,\bar{b}_n\rangle$. We need to show that
\begin{equation}\label{EQ3INS}
	\{(I_n,\ss_n):n\geq 1\}= \{(\bar{I}_n,\bar{\ss}_n):n\geq 1\}.
\end{equation}

We assert the following fact: the endpoints $\bar{a}_n$ and $\bar{b}_n$ are not the interior points of some $I_k$. Indeed, if $\bar{a}_n\in \mathring{I}_k$, consider the part Dirichlet forms of $(\EE,\FF)$ and $(\bar{\EE},\bar{\FF})$ on $J:=\mathring{I}_k$. Denote the part Dirichlet forms by $(\EE_J,\FF_J)$ and $(\bar{\EE}_J, \bar{\FF}_J)$ respectively. Clearly, $(\EE_J,\FF_J)$ is irreducible but $(\bar{\EE}_J, \bar{\FF}_J)$ is not irreducible since $\bar{I}_n\cap J$ is a non-trivial invariant set of $(\bar{\EE}_J, \bar{\FF}_J)$. This is in contradiction with \cite[Proposition~2.3~(3)]{LY16} since $(\bar{\EE}_J, \bar{\FF}_J)$ is a regular Dirichlet subspace of $(\EE_J,\FF_J)$.

 Given any $\bar{I}_n$, we shall prove that there exists some $k$ such that $\bar{I}_n=I_k$ and $\bar{\ss}_n=\ss_k$. In other words,
\begin{equation}\label{EQ3INS-1}
	\{(I_n,\ss_n):n\geq 1\}\supset \{(\bar{I}_n,\bar{\ss}_n):n\geq 1\}.
\end{equation}
In fact, it follows from the above fact that for any $j$, either $\mathring{I}_j\cap \bar{I}_n=\emptyset$ or $\mathring{I}_j\subset \bar{I}_n$. Denote all the intervals with interior included in $\bar{I}_n$ by
\begin{equation}\label{EQ3IKJ}
	\{{I}_{k_j}: j\geq 1 \}.
\end{equation}
Note that they are disjoint with each other. We assert that any two $I_{k_1}$ and $I_{k_2}$ are scale-connected, thus \eqref{EQ3IKJ} has at most one element by the condition (2). To this end, for any $j$, consider the part Dirichlet forms of $(\EE,\FF)$ and $(\bar{\EE}, \bar{\FF})$ on $J:=\mathring{I}_{k_j}$. We still denote them by $(\EE_J,\FF_J)$ and $(\bar{\EE}_J, \bar{\FF}_J)$. Clearly, we have
\[
	(\EE_J,\FF_J)=(\EE^{(\ss_{k_j})}_J, \FF^{(\ss_{k_j})}_J), \quad (\bar{\EE}_J, \bar{\FF}_J)= (\EE^{(\bar{\ss}_n)}_J, \FF^{(\bar{\ss}_n)}_J)
\]
on $L^2(J, m|_J)$. Since $C_c^\infty(J)$ is a special standard core of $(\bar{\EE}_J, \bar{\FF}_J)$, it follows from Theorem~\ref{THM32} that $\bar{\ss}_n$ is absolutely continuous on $J$. Moreover, the inclusion $C_c^\infty(J)\subset \bar{\FF}_J\subset \FF_J$ indicates $\bar{\ss}'_n>0, \ss'_{k_j}>0$ a.e. on $J$  by \eqref{EQ3TIA}. Notice the $\ss_{k_j}$ is absolutely continuous by (1). On the other hand, $(\bar{\EE}_J, \bar{\FF}_J)$ is a regular Dirichlet subspace of $(\EE_J,\FF_J)$. This implies by Lemma~\ref{LM31}
\[
	\frac{d\bar{\ss}_n}{d\ss_{k_j}}=\frac{\bar{\ss}'_n}{\ss'_{k_j}}=0\text{ or }1,\quad \text{a.e. on }J.
\]
Since $\bar{\ss}'_n/\ss'_{k_j}>0$ a.e. on $J$, we can conclude that $d\bar{\ss}_n=d\ss_{k_j}$ on $J$. Particularly, $\lambda_\ss=d\bar{\ss}_n$ on $\bar{I}_n$ and thus
$$\lambda_{\ss}([e_{k_1},e_{k_2}])=\lambda_{\bar{\ss}}([e_{k_1},e_{k_2}])<\infty.$$
Furthermore, for any $\varphi \in C_c^\infty((\bar{a}_n,\bar{b}_n))$, it follows from $\EE(\varphi, \varphi)=\bar{\EE}(\varphi, \varphi)$ that
\[
\int_{\bar{a}_n}^{\bar{b}_n}\left(\frac{d\varphi}{d\bar{\ss}_n} \right)^2d\bar{\ss}_n=\sum_{j}\int_{I_{k_j}}\left(\frac{d\varphi}{d\ss_{k_j}} \right)^2d\ss_{k_j}=\sum_{j}\int_{I_{k_j}} \left(\frac{d\varphi}{d\bar{\ss}_n} \right)^2d\bar{\ss}_n.
\]
Therefore, we have $$\lambda_l(\bar{I}_n)=|\bar{I}_n\setminus (\bigcup_jI_{k_j})|=0$$ and it follow that $I_{k_1}$ is scale-connected to $I_{k_2}$. In addition, the fact that $\lambda_l(\bar{I}_n)=0$ also implies \eqref{EQ3IKJ} has at least one element. Hence combining together, \eqref{EQ3IKJ} has exactly one element, which is denoted by $\mathring{I}_k$, and $\mathring{I}_k=(\bar{a}_n,\bar{b}_n)$ and $d\ss_k=d\bar{\ss}_n$. Notice that $\ss_k$ and $\bar{\ss}_n$ satisfy \textbf{(A)} and \textbf{(B)} (see \eqref{EQ2SJS}), and the fixed point $e_k$ in $\mathring{I}_k=(\bar{a}_n,\bar{b}_n)$ for $\ss_k$ and $\bar{\ss}_n$ (i.e. $\ss_k(e_k)=\bar{\ss}_n(e_k)=0$) could be taken as the same one. We then conclude that $I_k=\bar{I}_n$ and $\ss_k=\bar{\ss}_n$.

Finally, suppose that some $(I_k, \ss_k)$ is not contained in the right side of \eqref{EQ3INS}. Let us consider the the part Dirichlet forms $(\EE_J,\FF_J)$ and $(\bar{\EE}_J, \bar{\FF}_J)$ of $(\EE,\FF)$ and $(\bar{\EE},\bar{\FF})$ on $J:=\mathring{I}_k$. Note that $C_c^\infty(J)\subset \bar{\FF}_J\subset \FF_J$ and for any $\varphi\in C_c^\infty(J)$, $\EE_J(\varphi, \varphi)=\bar{\EE}(\varphi, \varphi)$. However, $\bar{\EE}_J(\varphi, \varphi)=0$ but
\[
	\EE_J(\varphi, \varphi)= \frac{1}{2}\int_J\left(\frac{d\varphi}{d\ss_k} \right)^2d\ss_k,
\]
which leads to a contradiction. That completes the proof.
\end{proof}

\begin{example}\label{EXA38}
As characterized in Theorem~\ref{THM21}, the Dirichlet form $(\EE,\FF)$ on $L^2(\mathbb{R},m)$ is constituted by a sequence (possibly, none, finite, or infinite)  of effective intervals with scale functions $\{(I_n,\ss_n):n\geq 1\}$.
\begin{itemize}
\item[(1)] The first example is very simple. Let \begin{align*}&I_1=(-\infty, 0],\ \ss_1(x)=x;\\
&I_2=(0, \infty),\ \ss_2(x)=-\exp\{1/x\}.\end{align*} Then $\{(I_i,\ss_i): i=1,2\}$ satisfies all the conditions including (1) and (2) in Theorem~\ref{THM37}. Particularly, $I_1$ and $I_2$ are scale-isolated with respect to $\{\ss_1, \ss_2\}$. As a result, $C_c^\infty(\mathbb{R})$ is a special standard core of the associated Dirichlet form.
\item[(2)] The second example is taken from \cite[Example~3.20]{LY16}. Let $K$ be the standard Cantor set in $[0,1]$. Set $U:=K^c$ and write $U$ as a union of disjoint open intervals:
\[
U=\bigcup_{n\geq 1}(a_n,b_n),
\]
where $(a_1,b_1)=(-\infty,0), (a_2,b_2)=(1,\infty)$. Let $I_1:=(-\infty, 0], I_2:=[1,\infty)$, $I_n:=[a_{n},b_{n}]$ for any $n\geq 3$. For each $n$, let $\ss_n(x)=x$ on $I_n$. Then the associated Dirichlet form $(\EE,\FF)$ is a so-called regular Dirichlet extension of one-dimensional Brownian motion (Cf. \cite{LY16}) and the associated diffusion process $X$ is a reflected Brownian motion on each interval $I_n$. One may easily check that $I_1$ is scale-connected to $I_2$ and thus for any $i,j$, $I_i$ is scale-connected to $I_j$. Hence $C_c^\infty(\mathbb{R})$ is not a special standard core of $(\EE,\FF)$ by Theorem~\ref{THM37}.  Indeed, as indicated in \cite{LY16}, the closure of $C_c^\infty(\mathbb{R})$ in $(\EE,\FF)$ corresponds to the one-dimensional Brownian motion.
\item[(3)] We consider the intervals $\{I_n:n\geq 1\}$ in the second example but redefine the scale functions as follows: $\ss_1(x)=x, \ss_2(x)=x$ and
\[
	\ss_n(x):=\frac{x}{|b_n-a_n|},\quad x\in I_n, n\geq 3.
\]
Then we have $d\ss_n(I_n)=1$ for any $n\geq 3$. Clearly, $(\EE,\FF)$ satisfies \eqref{EQ3NIL} and thus $C_c^\infty(\mathbb{R})\subset \FF$. Notice that for any $i\neq j$, there are infinite effective intervals between $I_i$ and $I_j$. This implies that each $I_i$ is scale-isolated since $\lambda_\ss([e_i, e_j])=\infty$. Therefore from Theorem~\ref{THM37} it follows that $C_c^\infty(\mathbb{R})$ is a special standard core of $(\EE,\FF)$.
\item[(4)] Finally, we consider an example similar to the second one but replace $K$ by a generalized Cantor set. In other words, $K$ has a Cantor-type structure but the Lebesgue measure of $K$ is positive. We refer to \cite[\S1.5]{F99} for the details to construct the generalized Cantor set. The effective intervals with the scale functions are given by the same method as the second example. Then \eqref{EQ3NIL} holds and $C_c^\infty(\mathbb{R})\subset \FF$. For any $i\neq j$, clearly $\lambda_\ss([e_i,e_j])<\infty$ since $d\ss_n$ is nothing but the Lebesgue measure. However, one may easily show that $\lambda_l([e_i,e_j])>0$ from the structure of generalized Cantor set. This indicates that each $I_i$ is scale-isolated and therefore, $C_c^\infty(\mathbb{R})$ is a special standard core of $(\EE,\FF)$.
\end{itemize}
\end{example}

We end this section with a remark to explain that the setting $I=\mathbb{R}$ is not essential as we pointed out in the beginning of this section.

\begin{remark}\label{RM39}
When $I$ is an open interval, all the considerations above are robust and only very few details need to be modified. For example, $(-L, L)$ in \eqref{EQ3NIL} could be replaced by a compact sub-interval of $I$.

If $I$ is not open, we can apply the trick in \S\ref{SEC231} to find an essentially equivalent Dirichlet form $(\EE^*,\FF^*)$ on a larger open interval and transfer the questions to the new Dirichlet form. For simplicity, let us consider the following example: $I=[0,\infty)$ and $(\EE,\FF)$ is a regular and strongly local Dirichlet form on $L^2(I,m)$. Let $(\EE^*,\FF^*)$ on $L^2(\mathbb{R},m^*)$ be given by \eqref{EQ2FUL2}, which is an essentially equivalent Dirichlet form of $(\EE,\FF)$. Clearly, $C_c^\infty(\mathbb{R})\subset \FF^*$ is equivalent to
\[
	C_c^\infty(I)=\{\varphi|_I:\varphi \in C_c^\infty(\mathbb{R})\}\subset \FF.
\]
We assert that $C_c^\infty(\mathbb{R})$ is a special standard core of $(\EE^*,\FF^*)$ if and only if $C_c^\infty(I)$ is a special standard core of $(\EE,\FF)$. The necessity is clear. For the sufficiency, assume that $C_c^\infty(I)$ is a special standard core of $(\EE,\FF)$, we need to prove $C_c^\infty(\mathbb{R})$ is a core of $(\EE^*,\FF^*)$. Consider the part Dirichlet form $(\EE_0, \FF_0)$ of $(\EE,\FF)$ on $\mathring{I}=(0, \infty)$. Note that $C_c^\infty((0,\infty))$ is a special standard core of $(\EE_0,\FF_0)$. Mimicking Theorem~\ref{THM37}, we can deduce that the effective intervals of $(\EE_0, \FF_0)$ satisfy the conditions (1) and (2) of Theorem~\ref{THM37}. Notice that the effective intervals are almost the same as those of $(\EE,\FF)$ (except for the case $[0, b\rangle$ is an effective interval of $(\EE,\FF)$ but the corresponding one of $(\EE_0,\FF_0)$ is $(0, b\rangle$). One easily sees that $(\EE,\FF)$ (hence $(\EE^*,\FF^*)$) also satisfies the conditions (1) and (2) of Theorem~\ref{THM37}. Applying Theorem~\ref{THM37} to $(\EE^*,\FF^*)$ again, we conclude that $C_c^\infty(\mathbb{R})$ is a special standard core of $(\EE^*,\FF^*)$.
\end{remark}

\subsection{Recovering Hamza's theorem}

A class of strongly local and regular Dirichlet forms on $L^2(\mathbb{R}, m)$ was introduced by Hamza \cite{H75}, which starts from an energy form defined on $C_c^\infty(\mathbb{R})$. For convenience, we introduce a stronger version which is stated in \cite{FOT11}. Let $\nu$ be a positive Radon measure on $\mathbb{R}$ and define the following quadratic form:
\begin{equation}\label{EQ3DEC}
\begin{aligned}
	\mathcal{D}(\mathcal{E})&:=C_c^\infty(\mathbb{R}),\\
	\EE(u,v)&:=\frac{1}{2}\int_\mathbb{R} u'(x)v'(x)\nu(dx),\quad u,v\in \mathcal{D}(\EE).
\end{aligned}\end{equation}
The aim is to find some conditions on $\nu$, under which $(\EE,\mathcal{D}(\EE))$ is closable on $L^2(\mathbb{R},m)$. Given a non-negative function $a(x)$ on $\mathbb{R}$, we say $x\in \mathbb{R}$ is a regular point of $a$ if for some $\epsilon>0$,
\begin{equation}\label{EQ3XEX}
\int_{x-\epsilon}^{x+\epsilon}\frac{1}{a(\xi)}d\xi<\infty.
\end{equation}
The set of all regular points of $a$ are denoted by $R(a)$ and $S(a):=\mathbb{R}\setminus R(a)$ is called the singular set of $a$. The following theorem is taken from \cite[Theorem~3.1.6]{FOT11} and we give an alternative proof of the necessity, and the sufficiency is left for further discussions afterwards.

\begin{theorem}\label{THM410}
The form \eqref{EQ3DEC} is closable on $L^2(\mathbb{R},m)$ if and only if the following conditions are satisfied:
\begin{itemize}
\item[(1)] $\nu$ is absolutely continuous,
\item[(2)] the density function $a$ ($\nu(dx)=a(x)dx$) vanished a.e. on its singular set $S(a)$.
\end{itemize}
\end{theorem}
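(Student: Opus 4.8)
The plan is to prove only the necessity here, deferring the sufficiency as announced, and to read it off from the structure theory of Sections~\ref{SEC2}--\ref{SEC3} rather than from an \emph{ad hoc} analytic construction. Suppose \eqref{EQ3DEC} is closable on $L^2(\mathbb{R},m)$ and let $(\bar{\EE},\bar{\FF})$ be its closure. I would first note that $(\bar{\EE},\bar{\FF})$ is a regular and strongly local Dirichlet form on $L^2(\mathbb{R},m)$: it is Markovian since the relevant normal contractions act smoothly on $C_c^\infty(\mathbb{R})$ and do not increase $\int(u')^2\,d\nu$; it is regular since $C_c^\infty(\mathbb{R})$ is dense in $C_c(\mathbb{R})$ in the uniform norm; and it is strongly local since $\varphi'\psi'\equiv0$ whenever $\varphi,\psi\in C_c^\infty(\mathbb{R})$ and $\varphi$ is constant near $\mathrm{supp}\,\psi$. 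By construction $C_c^\infty(\mathbb{R})$ is a special standard core of $(\bar{\EE},\bar{\FF})$, so Corollary~\ref{COR212} provides the (unique) effective intervals $\{(I_n,\ss_n):n\geq1\}$ of $(\bar{\EE},\bar{\FF})$; by Theorem~\ref{THM37}(1) (equivalently Theorem~\ref{THM32} on each interior $\mathring{I}_n$) every $\ss_n$ is absolutely continuous, and since $C_c^\infty(\mathbb{R})\subset\bar{\FF}\subset\FF$, Lemma~\ref{LM33} gives $dx\ll d\ss_n$ on $I_n$ (so $\ss_n'>0$ $dx$-a.e.\ there) as well as $\sum_{n}\int_{I_n\cap K}(1/\ss_n')\,dx<\infty$ for every compact $K$, i.e.\ \eqref{EQ3NIL} rewritten via $d\ss_n=\ss_n'\,dx$.

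The core of the argument is to identify $\nu$ with the scale measure of this representation. For $\varphi,\psi\in C_c^\infty(\mathbb{R})$ the definition of the closure gives $\frac12\int_{\mathbb{R}}\varphi'\psi'\,d\nu=\EE(\varphi,\psi)=\bar{\EE}(\varphi,\psi)$, whereas Theorem~\ref{THM21} and the absolute continuity of the $\ss_n$ give
\[
\bar{\EE}(\varphi,\psi)=\frac12\sum_{n\geq1}\int_{I_n}\frac{d\varphi}{d\ss_n}\frac{d\psi}{d\ss_n}\,d\ss_n=\frac12\int_{\mathbb{R}}\varphi'(x)\,\psi'(x)\,a(x)\,dx,
\]
where $a:=1/\ss_n'$ on each $I_n$ and $a:=0$ off $\bigcup_n I_n$; by the previous paragraph $a\in L^1_{\mathrm{loc}}(\mathbb{R})$. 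So $\nu$ and $a\,dx$ integrate every function of the form $\varphi'\psi'$ identically. I would then use the elementary fact that $\mathrm{span}\{\varphi'\psi':\varphi,\psi\in C_c^\infty(\mathbb{R})\}=C_c^\infty(\mathbb{R})$: with $W:=\{f':f\in C_c^\infty(\mathbb{R})\}=\{g\in C_c^\infty(\mathbb{R}):\int g=0\}$, a codimension-one subspace, one has $W\subset\mathrm{span}(W\!\cdot\!W)$ (given $g\in W$, pick $\rho\in C_c^\infty(\mathbb{R})$ with $\rho\equiv1$ near $\mathrm{supp}\,g$ and $\int\rho=0$, so $g=g\rho$), while $\eta^2\in W\!\cdot\!W\setminus W$ for any $\eta\in W$, $\eta\not\equiv0$; hence the span is all of $C_c^\infty(\mathbb{R})$. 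Therefore $\nu=a\,dx$, which is assertion~(1); moreover $a>0$ $dx$-a.e.\ on $\bigcup_n I_n$ and $a=0$ off it.

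To get assertion~(2) I would show $S(a)\cap\mathring{I}_n=\emptyset$ for each $n$: for $x\in\mathring{I}_n$ choose $\epsilon>0$ with $(x-\epsilon,x+\epsilon)\subset I_n$; there $1/a=\ss_n'$ $dx$-a.e., so
\[
\int_{x-\epsilon}^{x+\epsilon}\frac{1}{a(\xi)}\,d\xi=\int_{x-\epsilon}^{x+\epsilon}\ss_n'(\xi)\,d\xi=\ss_n(x+\epsilon)-\ss_n(x-\epsilon)<\infty,
\]
and so $x\in R(a)$ by \eqref{EQ3XEX}. Consequently $S(a)\subset\big(\bigcup_n\mathring{I}_n\big)^c$, which differs from $\big(\bigcup_n I_n\big)^c$ only by the countable, hence $dx$-null, set of endpoints; and $a=0$ on $\big(\bigcup_n I_n\big)^c$. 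Thus $a=0$ $dx$-a.e.\ on $S(a)$, completing the necessity.

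The step I expect to be the main obstacle is the clean identification $\nu=a\,dx$; it rests on the observation that the products $\varphi'\psi'$ already span $C_c^\infty(\mathbb{R})$, which is precisely what dispenses with the positivity and approximation arguments of the classical analytic proof. A lesser point to check along the way is that $\varphi|_{I_n}\in\FF^{(\ss_n)}$ with $d\varphi/d\ss_n=\varphi'/\ss_n'$ for $\varphi\in C_c^\infty(\mathbb{R})$, which is immediate from $C_c^\infty(\mathbb{R})\subset\FF$, the absolute continuity of $\ss_n$, and \eqref{EQ3NIL}. The converse implication is of a more constructive character --- one builds the effective intervals from the connected components of the open set $R(a)$ with scale densities $1/a$, and then invokes Theorems~\ref{THM21} and \ref{THM37} --- and is, as announced, taken up separately afterwards (see Corollary~\ref{COR411} and Theorem~\ref{THM414}).
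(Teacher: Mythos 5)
Your proof of the necessity is correct and follows essentially the same route as the paper's: pass to the closure, invoke Corollary~\ref{COR212}, Theorem~\ref{THM37} and Lemma~\ref{LM33} to obtain absolutely continuous scale functions on the effective intervals, and identify $\nu$ with $\sum_{n}(1/\ss_n')\,dx|_{I_n}$. The only differences are that you make explicit, via the observation that $\mathrm{span}\{\varphi'\psi':\varphi,\psi\in C_c^\infty(\mathbb{R})\}=C_c^\infty(\mathbb{R})$, the identification $\nu=a\,dx$ which the paper leaves implicit, and that you derive assertion~(2) from the single inclusion $\bigcup_n\mathring{I}_n\subset R(a)$ rather than from the equality $R(a)=\bigcup_n\mathring{I}_n$ that the paper extracts from scale-isolation --- both steps are sound, and, like the paper, you defer the sufficiency to the subsequent constructive discussion.
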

\begin{proof}[Proof of necessity]
Denote the closure of $\mathcal{D}(\EE)$ relative to $\EE_1$ by $\FF$. Then $(\EE,\FF)$ is a regular and strongly local Dirichlet form on $L^2(\mathbb{R},m)$ and $C_c^\infty(\mathbb{R})$ is a special standard core of $(\EE,\FF)$. Applying Theorem~\ref{THM21} to $(\EE,\FF)$, denote its effective intervals by $\{(I_n, \ss_n): n\geq 1\}$. It follows from Lemma~\ref{LM33} and Theorem~\ref{THM37} that $\ss_n$ is absolutely continuous and $\ss'_n>0$ a.e. on $I_n$. Moreover, for any $\varphi \in C_c^\infty(\mathbb{R})$, we have
\begin{equation}\label{EQ3EVV}
	\EE(\varphi, \varphi)= \frac{1}{2}\sum_{n\geq 1}\int_{I_n}\left(\frac{d\varphi}{d\ss_n}\right)^2d\ss_n=\frac{1}{2}\sum_{n\geq 1}\int_{I_n}\left(\varphi'(x)\right)^2\frac{1}{\ss'_n(x)}dx.
\end{equation}
Define a measure $\mu$ as follows:
\[
\mu|_{\left\{\mathbb{R}\setminus \bigcup_{n\geq 1}I_n\right\}}=0,\quad  \mu|_{I_n}:= \frac{1}{\ss'_n(x)}dx,\quad n\geq 1.	
\]
Then Lemma~\ref{LM33} and Remark~\ref{RM34}~(4) imply that $\mu$ is a Radon measure on $\mathbb{R}$ and from \eqref{EQ3DEC} and \eqref{EQ3EVV} we can deduce that $\mu=\nu$. Particularly, $\nu$ is absolutely continuous, i.e. (1) holds. From the fact each $I_i$ is scale-isolated (see Theorem~\ref{THM37}), we conclude that the regular set of the density function $a$ of $\nu$ is
\[
	R(a)=\bigcup_{n\geq 1} \mathring{I}_n,
\]
where $\mathring{I}_n$ is the interior of $I_n$. Clearly, the density function $a$ vanishes a.e. on $S(a)$. That completes the proof.
\end{proof}

Assume the quadratic form \eqref{EQ3DEC} satisfies the two conditions in Theorem~\ref{THM410} and denote the closure of $\mathcal{D}(\EE)$ relative to $\EE_1$ by $\FF$. Clearly, $(\EE,\FF)$ is a strongly local and regular Dirichlet form on $L^2(\mathbb{R})$, and $C_c^\infty(\mathbb{R})$ is a special standard core of $(\EE,\FF)$. The main purpose of rest part is to identify the effective intervals of $(\EE,\FF)$, so that we can obtain an explicit structure of the associated diffusion process.

We first note that the condition that $\nu$ is Radon plays a similar role to \eqref{EQ3CCI} since it guarantees that the integration in \eqref{EQ3DEC} is finite. On the other hand, from \eqref{EQ3XEX} we can easily deduce that the regular set $R(a)$ is open. Thus we can write $R(a)$ as a union of disjoint open intervals:
\begin{equation}\label{EQ3RAN}
	R(a)=\bigcup_{n\geq 1} (a_n,b_n).
\end{equation}
For each $n$ take a fixed point $e_n\in (a_n,b_n)$ and define
\begin{equation}\label{EQ3SNX}
	\ss_n(x):=\int_{e_n}^x \frac{1}{a(\xi)}d\xi,\quad x\in (a_n,b_n).
\end{equation}
We assert $\ss_n$ is an absolutely continuous scale function on $(a_n,b_n)$, i.e. $\ss_n$ is absolutely continuous and strictly increasing. In fact, one may easily verify that $|\ss_n(x)|<\infty$ by \eqref{EQ3XEX} and Heine-Borel theorem. It follows that $1/a\in L^1_\mathrm{loc}((a_n,b_n))$ and $\ss_n$ is absolutely continuous. Clearly, $\ss_n$ is increasing, and it is actually strictly increasing. In fact suppose that $\ss_n(x)=\ss_n(y)$ for $a_n<x<y<b_n$. Then $a(\xi)=\infty$ for any $\xi\in (x,y)$, which contradicts that $\nu(d\xi)=a(\xi)d\xi$ is Radon.

Naturally, we can extend the definition of $\ss_n$ to
\[
	\ss_n(a_n):=\lim_{x\downarrow a_n}\ss_n(x)(\geq -\infty),\quad \ss_n(b_n):=\lim_{x\uparrow b_n}\ss_n(x)(\leq \infty).
\]
Set
\begin{equation}\label{EQ3INA}
	I_n:=\langle a_n, b_n\rangle,
\end{equation}
where $a_n\in I_n$ (resp. $b_n\in I_n$) if and only if $a_n>-\infty, \ss_n(a_n)>-\infty$ (resp. $b_n<\infty, \ss_n(b_n)<\infty$). Then $\{I_n: n\geq 1\}$ is mutually disjoint.  Indeed, suppose that $I_i=\langle a_i, b_i], I_j=[a_j, b_j\rangle$ and $b_i=a_j$. Then $\ss_i(b_i)<\infty$ and $\ss_j(a_j)>-\infty$ imply the common endpoint $b_i=a_j$ is a regular point of $a$. However, \eqref{EQ3RAN} tells us $b_i=a_j\notin R(a)$, which leads to a contradiction.

\begin{corollary}\label{COR411}
Let $(\EE,\FF)$ be the closure of \eqref{EQ3DEC} on $L^2(\mathbb{R},m)$ under the conditions of Theorem~\ref{THM410}. The interval $I_n$ and the scale function $\ss_n$ on $I_n$ are defined by \eqref{EQ3INA} and \eqref{EQ3SNX} respectively for each $n\geq 1$. Then $(\EE,\FF)$ can be expressed as \eqref{EQ2FUL} with the effective intervals $\{(I_n, \ss_n): n\geq 1\}$.
\end{corollary}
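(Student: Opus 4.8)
The plan is to identify the effective intervals produced abstractly by Theorem~\ref{THM21} with the concrete objects $(I_n,\ss_n)$ built from the density $a$ in \eqref{EQ3INA} and \eqref{EQ3SNX}, reusing almost all of the work already done in the necessity part of Theorem~\ref{THM410}. First I would record what that argument already yields. Since $(\EE,\FF)$ is the closure of \eqref{EQ3DEC}, it is a regular and strongly local Dirichlet form on $L^2(\mathbb{R},m)$ for which $C_c^\infty(\mathbb{R})$ is a special standard core; applying Theorem~\ref{THM21} gives effective intervals, which I would denote $\{(\tilde I_n,\tilde\ss_n):n\geq 1\}$ to keep them separate from the concrete family $\{(I_n,\ss_n)\}$. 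By Lemma~\ref{LM33} and Theorem~\ref{THM37}, each $\tilde\ss_n$ is absolutely continuous with $\tilde\ss_n'>0$ a.e.\ on $\tilde I_n$, each $\tilde I_n$ is scale-isolated, $\nu$ is absolutely continuous with density $a$, $\nu$ vanishes off $\bigcup_n\tilde I_n$, on each $\tilde I_n$ one has $a=1/\tilde\ss_n'$ a.e., and $R(a)=\bigcup_n\mathring{\tilde I}_n$; all of these are established verbatim in the proof of Theorem~\ref{THM410}.

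Next I would match the two families of intervals. By \eqref{EQ3RAN}, $\bigcup_n(a_n,b_n)$ is the decomposition of $R(a)$ into its (disjoint) connected components; since $R(a)=\bigcup_n\mathring{\tilde I}_n$ is another such decomposition and the connected components of an open subset of $\mathbb{R}$ are uniquely determined, after reindexing the $\tilde I_n$ we may assume $(a_n,b_n)=\mathring{\tilde I}_n$ for every $n$. On $(a_n,b_n)$ the definition \eqref{EQ3SNX} gives $\ss_n'=1/a=\tilde\ss_n'$ a.e., and both functions are absolutely continuous, so $\ss_n-\tilde\ss_n$ is constant there; since a scale function is determined only up to an additive constant (equivalently, up to the choice of the normalizing point $e_n$), we may take $\ss_n=\tilde\ss_n$ on $(a_n,b_n)$, whence $\ss_n(a_n)=\tilde\ss_n(a_n)$ and $\ss_n(b_n)=\tilde\ss_n(b_n)$ follow by letting $x\downarrow a_n$ and $x\uparrow b_n$. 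Finally, since $\tilde\ss_n\in\SS_\infty(\tilde I_n)$, the adapted conditions $(\mathbf{A}_R)$ and $(\mathbf{B}_R)$ read $a_n\in\tilde I_n\iff a_n+\tilde\ss_n(a_n)>-\infty$ and $b_n\in\tilde I_n\iff b_n+\tilde\ss_n(b_n)<\infty$; using $\tilde\ss_n=\ss_n$ at the endpoints, these are exactly the membership rules that define $I_n$ in \eqref{EQ3INA} (indeed \eqref{EQ3INA} is tailored precisely so that $\ss_n$ is adapted to $I_n$). Hence $\tilde I_n=I_n$ and $\tilde\ss_n=\ss_n$, so the effective intervals of $(\EE,\FF)$ are $\{(I_n,\ss_n):n\geq 1\}$, and the representation \eqref{EQ2FUL} for $(\EE,\FF)$ with these effective intervals is then immediate from Theorem~\ref{THM21}.

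I do not expect a genuine obstacle here: once Theorem~\ref{THM21}, Theorem~\ref{THM37}, and the necessity half of Theorem~\ref{THM410} are in hand, the corollary is a bookkeeping matter of checking that the concretely constructed $(I_n,\ss_n)$ coincide with the abstract effective intervals. The only points needing a little care are the correspondence between ``$a_n\in I_n$'' in \eqref{EQ3INA} and the adaptedness of $\tilde\ss_n$---in particular the degenerate boundary situations $(\text{L}_R)$, $(\text{R}_R)$, where $a_n=-\infty$ or $b_n=\infty$ while $\tilde\ss_n$ stays finite, which are built into $\SS_\infty$ and into \eqref{EQ2FSU2} and hence are handled automatically by Theorem~\ref{THM21}---and the harmless additive normalization ambiguity in the scale functions.
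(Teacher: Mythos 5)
Your argument is correct, but it runs in the opposite direction from the paper's. You start from the abstract representation of $(\EE,\FF)$ supplied by Theorem~\ref{THM21}, harvest from the necessity proof of Theorem~\ref{THM410} the facts that the abstract effective intervals $(\tilde I_n,\tilde\ss_n)$ satisfy $a=1/\tilde\ss_n'$ a.e.\ on $\tilde I_n$, that $a$ vanishes a.e.\ off $\bigcup_n\tilde I_n$, and that $R(a)=\bigcup_n\mathring{\tilde I}_n$, and then identify $(\tilde I_n,\tilde\ss_n)$ with the concrete $(I_n,\ss_n)$ by matching connected components of $R(a)$, integrating $1/a$, and checking the endpoint membership rules against adaptedness. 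The paper instead argues in the ``sufficiency'' style: it forms the candidate Dirichlet form $(\bar\EE,\bar\FF)$ of type \eqref{EQ2FUL} built from the concrete $(I_n,\ss_n)$, checks $C_c^\infty(\mathbb{R})\subset\bar\FF$ with $\bar\EE=\EE$ on $C_c^\infty(\mathbb{R})$, and then verifies via Theorem~\ref{THM37} that $C_c^\infty(\mathbb{R})$ is a special standard core of $(\bar\EE,\bar\FF)$ --- the only real work being a short contradiction showing each $I_i$ is scale-isolated (scale-connection of $I_i$ and $I_j$ would force some singular point of $a$ between them into $R(a)$); equality with $(\EE,\FF)$ then follows because both are the closure of the same pre-form. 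Your route is the natural continuation of the necessity argument and makes the identification explicit, but it leans on the full set equality $R(a)=\bigcup_n\mathring{\tilde I}_n$, which the paper only asserts tersely (its harder inclusion again requires the scale-isolation argument), and on the component/normalization bookkeeping you describe; the paper's route avoids all of that and needs only the easy implication plus uniqueness of closures, so it is logically lighter even though it proves the same statement. Both are sound.
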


\begin{proof}
Let $(\bar{\EE},\bar{\FF})$ be the Dirichlet form given by \eqref{EQ2FUL} with effective intervals $\{(I_n, \ss_n): n\geq 1\}$. We need to show $(\EE,\FF)=(\bar{\EE}, \bar{\FF})$.

We first assert that $C_c^\infty(\mathbb{R})\subset \bar{\FF}$. In fact, since $\ss'_n=1/a>0$ a.e. on $I_n$, it follows that $\tt_n:=\ss_n^{-1}$ is absolutely continuous. By Lemma~\ref{LM33}, Remark~\ref{RM34}~(4) and
\[
	\sum_{n\geq 1} \int_{I_n\bigcap (-L, L)}\frac{1}{\ss'_n(x)}dx=\sum_{n\geq 1} \int_{I_n\bigcap (-L, L)}a(x)dx=\nu((-L,L))<\infty, \quad \forall L>0,
\]
it can be seen that $C_c^\infty(\mathbb{R})\subset \bar{\FF}$.
Moreover, for any $\varphi\in C_c^\infty(\mathbb{R})$, from Remark~\ref{RM34}~(4) and the second condition of Theorem~\ref{THM410}, we can deduce that
\[
\begin{aligned}
	\bar{\EE}(\varphi, \varphi) &= \frac{1}{2}\sum_{n\geq 1}\int_{I_n}\left(\frac{d\varphi}{d\ss_n}\right)^2d\ss_n \\
	&=\frac{1}{2}\sum_{n\geq 1}\int_{I_n}\varphi'(x)^2\frac{1}{\ss'_n(x)}dx \\
	&=\frac{1}{2}\sum_{n\geq 1}\int_{I_n}\varphi'(x)^2a(x)dx \\
	&=\frac{1}{2}\int_\mathbb{R}\varphi'(x)^2\nu(dx) \\
	&=\EE(\varphi, \varphi).
\end{aligned}\]
Thus we need only to show that $C_c^\infty(\mathbb{R})$ is a special standard core of $(\bar{\EE},\bar{\FF})$. By Theorem~\ref{THM37}, it suffices to prove that each $I_i$ is scale-isolated with respect to $\{\ss_n: n\geq 1\}$. Suppose that $I_i$ is scale-connected to $I_j$ for some $j\neq i$. On one hand, there exists some point $x$ between $I_i$ and $I_j$, which is not a regular point of $a$ by \eqref{EQ3RAN}. On the other hand, the measure $\lambda_\ss$ defined by \eqref{EQ3SNS}  $$\lambda_{\ss}=(1/a(\xi))\cdot d\xi|_{\cup_{n\geq 1}I_n}.$$ Thus from $\lambda_l([e_i,e_j])=0$, we can obtain that for some $\epsilon>0$ such that $(x-\epsilon,x+\epsilon)\subset [e_i, e_j]$,
\[
	\int_{x-\epsilon}^{x+\epsilon}\frac{1}{a(\xi)}d\xi=\int_{x-\epsilon}^{x+\epsilon} d\lambda_\ss\leq \lambda_\ss([e_i,e_j])<\infty.
\]
In other words, $x\in R(a)$, which leads to a contradiction. That completes the proof.
\end{proof}

\subsection{Closure of $C^{\infty}_c$}

Let $(\EE,\FF)$ be a strongly local and regular Dirichlet form on $L^2(\mathbb{R},m)$ with the effective intervals $\{(I_n, \ss_n): n\geq 1\}$ and assume that $$C_c^\infty(\mathbb{R})\subset \FF.$$  We still denote the Dirichlet form associated with the closure of $C_c^\infty(\mathbb{R})$ in $(\EE,\FF)$ by $(\bar{\EE},\bar{\FF})$. This section is devoted to explore the structure of $(\bar{\EE}, \bar{\FF})$, which is not necessarily accordant with $(\EE,\FF)$. Since $(\bar{\EE},\bar{\FF})$ is also a regular and strongly local Dirichlet form on $L^2(\mathbb{R},m)$, it follows from Theorem~\ref{THM21} that $(\bar{\EE},\bar{\FF})$ admits the expression of \eqref{EQ2FUL} with another sequence of effective intervals with scale functions, which is our essential target.

Before presenting the main theorem, we need to do some preparation. As the irreducible case in \S\ref{SEC32}, the scale function $\ss_n$ is not necessarily absolutely continuous. However we may write its Lebesgue decomposition as follows:
\[
	d\ss_n=g_n(x)dx+\kappa_n, \quad \kappa_n \perp dx.
\]
Mimicking \S\ref{SEC32}, we can define the absolutely continuous part of $\ss_n$ by
\[
	\bar{\ss}_n(x):=\int_{e_n}^xg_n(y)dy,\quad x\in I_n.
\]
We know that $\bar{\ss}_n$ is an absolutely continuous scale function on $I_n$. One may guess the effective intervals of $(\bar{\EE}, \bar{\FF})$ are $\{(I_n, \bar{\ss}_n): n\geq 1\}$. But this is not correct. A counterexample is Example~\ref{EXA38}~(2), in which $\ss_n$ is absolutely continuous but $(\bar{\EE},\bar{\FF})$ corresponds to the one-dimensional Brownian motion. Another example below shows that the scale function $\bar{\ss}_n$ may not be adapted to $I_n$, i.e. $\bar{\ss}_n\notin \SS_\infty(I_n)$, where $\SS_\infty(I_n)$ is given by \eqref{EQ2SJS}.

\begin{example}
Let $J:=(0,1]$. Further let $\mathfrak{c}$ be the standard Cantor function on $[0, 1]$, particularly $\mathfrak{c}(0)=0, \mathfrak{c}(1)=1$. Then $\mathfrak{c}$ induces a singular (i.e. $d\mathfrak{c}\perp dx$) probability measure $d\mathfrak{c}$ on $(0,1)$. For each integer $p\geq 1$, consider the map
\[
f_p: (0,1)\rightarrow \left(\frac{1}{2^p}, \frac{1}{2^{p-1}}\right),\quad x\mapsto \frac{1+x}{2^p}
\]
and denote the image measure of $d\mathfrak{c}$ relative to $f_p$ by $d\mathfrak{c}_p:=d\mathfrak{c}\circ f_p^{-1}$. Then $d\mathfrak{c}_p$ is a probability measure on $\left(\frac{1}{2^p}, \frac{1}{2^{p-1}}\right)$. Clearly, $\sum_{p\geq 1}d\mathfrak{c}_p+dx$ defines a Radon measure on $J=(0,1]$. Define $\ss$ to be the increasing function associated to this measure, in other words, $\ss(1)=1$ and
\begin{equation}\label{EQ3SNX2}
	 d\ss=dx+ \sum_{p\geq 1}d\mathfrak{c}_p.
\end{equation}
Note that $\ss(0):=\lim_{x\downarrow 0}\ss(x)=-\infty$. Thus $\ss\in \SS_\infty(J)$. On the other hand, since $d\mathfrak{c}_p\perp dx$, it follows that \eqref{EQ3SNX2} is the Lebesgue decomposition of $d\ss$. This implies that $d\bar{\ss}(x)=dx$ and $\bar{\ss}\notin \SS_\infty(J)$.
\end{example}

From these examples, we can see that both intervals and scale functions will change. Recall the definition of scale-connection for the effective intervals in Definition~\ref{DEF35}. Similarly we use the new family of scale functions $\{\bar{\ss}_n\}$ to define scale-connection. Precisely set
\[
	\lambda_{\bar{\ss}}:=\sum_{n\geq 1} d\bar{\ss}_n|_{I_n}.
\]
Two effective intervals $I_i$ and $I_j$ are scale-connected with respect to $\{\bar{\ss}_n: n\geq 1\}$ if
\[
\lambda_{\bar{\ss}}([e_i, e_j])<\infty,\quad  \lambda_l([e_i,e_j])=0.
\]
Note that $\lambda_{\bar{\ss}}\leq \lambda_\ss$. It follows that the scale-connection with respect to $\{\ss_n: n\geq 1\}$ implies the scale-connection with respect to $\{\bar{\ss}_n: n\geq 1\}$ but not vice versa. We write $I_i\overset{\bar{\ss}}{\sim} I_j$ to represent that $I_i$ is scale-connected to $I_j$ with respect to $\{\bar{\ss}_n:n\geq 1\}$.  Clearly, `$\overset{\bar{\ss}}{\sim}$' is also an equivalence relation on all effective intervals $\{I_n:n\geq 1\}$. Denote all the equivalence classes induced by $\overset{\bar{\ss}}{\sim}$ by
\[
	\bar{\mathfrak{I}}_k=\{I_{k_j}: j\geq 1\}, \quad k\geq 1,
\]
where $k$ is the index of the equivalence class and $\{k_j: j\geq 1\}$ are the indexes of the effective intervals in $k$-th equivalence class. Note that both numbers of equivalence classes and effective intervals in each equivalence class may be finite or infinite. As described in Remark~\ref{RM36}~(3), $\bar{\mathfrak{I}}_k$ also looks like a `continuous' cluster.

Fix $k$, and define
\[
	\bar{\mathfrak{a}}_k:= \inf\{x\in I_{k_j}: j\geq 1\},\quad \bar{\mathfrak{b}}_k:=\sup\{x\in I_{k_j}: j\geq 1\}.
\]
In other words, $[\bar{\mathfrak{a}}_k, \bar{\mathfrak{b}}_k]$ is the smallest closed interval, which contains all the intervals in the equivalence class $\bar{\mathfrak{I}}_k$ (the notation $[\bar{\mathfrak{a}}_k, \bar{\mathfrak{b}}_k]$ is a little abused since $\bar{\mathfrak{a}}_k$ or $\bar{\mathfrak{b}}_k$ may be infinity). One may easily check that $\lambda_{\bar{\ss}}|_{(\bar{\mathfrak{a}}_k, \bar{\mathfrak{b}}_k)}$ is a Radon measure on $(\bar{\mathfrak{a}}_k, \bar{\mathfrak{b}}_k)$. Then we can take a fixed point $\bar{\mathfrak{e}}_k\in (\bar{\mathfrak{a}}_k, \bar{\mathfrak{b}}_k)$ and define
\[
\bar{\mathfrak{s}}_k(x):=\int_{\bar{\mathfrak{e}}_k}^x d\lambda_{\bar{\ss}},\quad x\in [\bar{\mathfrak{a}}_k, \bar{\mathfrak{b}}_k].
\]
Clearly, $\bar{\mathfrak{s}}_k$ is a scale function on $[\bar{\mathfrak{a}}_k, \bar{\mathfrak{b}}_k]$ and $\bar{\mathfrak{s}}_k(\bar{\mathfrak{a}}_k)\geq -\infty, \bar{\mathfrak{s}}_k(\bar{\mathfrak{b}}_k)\leq \infty$.
Further define
\[
	\bar{\mathtt{I}}_k:= \langle \bar{\mathfrak{a}}_k, \bar{\mathfrak{b}}_k \rangle,
\]
where $\bar{\mathfrak{a}}_k \in \bar{\mathtt{I}}_k$ (resp. $\bar{\mathfrak{b}}_k \in \bar{\mathtt{I}}_k$) if and only if $\bar{\mathfrak{a}}_k>-\infty, \bar{\mathfrak{s}}_k(\bar{\mathfrak{a}}_k)>-\infty$ (resp. $\bar{\mathfrak{b}}_k<\infty, \bar{\mathfrak{s}}_k(\bar{\mathfrak{b}}_k)<\infty$). We need to check that these intervals $\{\bar{\mathtt{I}}_k: k\geq 1\}$ are mutually disjoint. Indeed, if $\bar{\mathfrak{b}}_1=\bar{\mathfrak{a}}_2\in \bar{\mathtt{I}}_1\cap \bar{\mathtt{I}}_2$, then any interval in $\bar{\mathfrak{I}}_1$ is scale-connected to any interval in $\bar{\mathfrak{I}}_2$ with respect to $\{\bar{\ss}_n: n\geq 1\}$. In other words, $\bar{\mathfrak{I}}_1=\bar{\mathfrak{I}}_2$, which contradicts the definition of equivalence classes.

Intuitively, as described above we merge all the intervals in $\bar{\mathfrak{I}}_k$ into a new one $\bar{\mathtt{I}}_k$ (notice that $\lambda_l(\bar{\mathtt{I}}_k)=0$) and obtain a new scale function $\bar{\mathfrak{s}}_k$ by the sum of $\{\bar{\ss}_{k_j}: j\geq 1\}$. Now we give the following definition.

\begin{definition}
The pair $(\bar{\mathtt{I}}_k, \bar{\mathfrak{s}}_k)$ is called the $C^\infty$-merging of the equivalence class $\bar{\mathfrak{I}}_k$ for any $k\geq 1$.
\end{definition}

In Example~\ref{EXA38}~(2), $\bar{\ss}_n=\ss_n$ and all the effective intervals are scale-connected to each other. Thus there is only one equivalence class $\bar{\mathfrak{I}}_1$ and we have $\bar{\mathfrak{a}}_1=-\infty, \bar{\mathfrak{b}}_1=\infty$. Take the fixed point $\bar{\mathfrak{e}}_1=0$ and we can obtain $\bar{\mathfrak{s}}_1(x)=x$ and $\bar{\mathtt{I}}_1=\mathbb{R}$. Recall the $(\bar{\EE},\bar{\FF})=(\frac{1}{2}\mathbf{D}, H^1(\mathbb{R}))$, which is the associated Dirichlet form of one-dimensional Brownian motion. Thus the $C^\infty$-merging $(\bar{\mathtt{I}}_1, \bar{\mathfrak{s}}_1)$ of  $\bar{\mathfrak{I}}_1$ is exactly the effective interval of $(\bar{\EE},\bar{\FF})$. The following theorem tells us  for general cases all $C^\infty$-merging constitute actually the effective intervals of $(\bar{\EE},\bar{\FF})$.

\begin{theorem}\label{THM414}
Let $(\EE,\FF)$ be a regular and strongly local  Dirichlet form with the effective intervals $\{(I_n, \ss_n): n\geq 1\}$ on $L^2(\mathbb{R},m)$ and assume $C_c^\infty(\mathbb{R})\subset \FF$. Further let $\bar{\ss}_n$ be the absolutely continuous part of $\ss_n$ and $\{\bar{\mathfrak{I}}_k:k\geq 1\}$ be the equivalence classes induced by the scale-connection with respect to $\{\bar{\ss}_n: n\geq 1\}$. Then the closure $(\bar{\EE},\bar{\FF})$ of $C_c^\infty(\mathbb{R})$ in $(\EE,\FF)$ can be expressed as \eqref{EQ2FUL} with the effective intervals $\{(\bar{\mathtt{I}}_k, \bar{\mathfrak{s}}_k): k\geq 1\}$, where $(\bar{\mathtt{I}}_k, \bar{\mathfrak{s}}_k)$ is the $C^\infty$-merging of $\bar{\mathfrak{I}}_k$ for all $k$.
\end{theorem}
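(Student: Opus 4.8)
My plan is to produce the asserted form directly as the image under Theorem~\ref{THM21} of the data $\{(\bar{\mathtt{I}}_k,\bar{\mathfrak{s}}_k):k\geq1\}$ and then identify it with $(\bar{\EE},\bar{\FF})$. Concretely, let $(\EE',\FF')$ be the regular strongly local Dirichlet form on $L^2(\mathbb{R},m)$ furnished by Theorem~\ref{THM21} with effective intervals $\{(\bar{\mathtt{I}}_k,\bar{\mathfrak{s}}_k):k\geq1\}$; the required hypotheses (mutual disjointness of the $\bar{\mathtt{I}}_k$ and $\bar{\mathfrak{s}}_k\in\SS_\infty(\bar{\mathtt{I}}_k)$) were already checked when these objects were constructed. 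By the uniqueness assertion in Theorem~\ref{THM21} it is enough to prove $(\bar{\EE},\bar{\FF})=(\EE',\FF')$, and since $\|\cdot\|_{\EE_1}\geq\|\cdot\|_{L^2(\mathbb{R},m)}$ this reduces to three claims: (i) $C_c^\infty(\mathbb{R})\subset\FF'$; (ii) $\EE'(\varphi,\varphi)=\EE(\varphi,\varphi)$ for all $\varphi\in C_c^\infty(\mathbb{R})$; (iii) $C_c^\infty(\mathbb{R})$ is $\EE'_1$-dense in $\FF'$. Given (i)--(iii), the $\EE_1$- and $\EE'_1$-completions of $C_c^\infty(\mathbb{R})$ agree as subspaces of $L^2(\mathbb{R},m)$, so $\FF'=\bar{\FF}$ and $\EE'=\bar{\EE}$ there.

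For (i) and (ii) I would start from the observations, available because $C_c^\infty(\mathbb{R})\subset\FF$, that $dx\ll d\ss_n$ on each $I_n$ and \eqref{EQ3NIL} holds (Lemma~\ref{LM33}), and that the absolutely continuous density $g_n$ of $d\ss_n$ satisfies $g_n>0$ a.e.\ on $I_n$ by the argument of \S\ref{SEC32}; hence $d\bar{\ss}_n=g_n\,dx$ is mutually absolutely continuous with $dx$ on $I_n$ and $(dx/d\ss_n)^2\,d\ss_n=(1/g_n)\,dx=(dx/d\bar{\ss}_n)^2\,d\bar{\ss}_n$ there. Since the effective intervals meeting $[\bar{\mathfrak{a}}_k,\bar{\mathfrak{b}}_k]$ are exactly those of $\bar{\mathfrak{I}}_k$ (by the analogue of Remark~\ref{RM36}~(3)) and $\lambda_l(\bar{\mathtt{I}}_k)=0$, one has $d\bar{\mathfrak{s}}_k=\sum_j d\bar{\ss}_{k_j}|_{I_{k_j}}$ on $\bar{\mathtt{I}}_k$; summing the displayed identity over $k$ and $j$ recovers $\sum_n\int_{I_n\cap(-L,L)}(dx/d\ss_n)^2\,d\ss_n<\infty$ for each $L$, so Lemma~\ref{LM33} applied to $(\EE',\FF')$ gives (i). For (ii), fixing $\varphi\in C_c^\infty(\mathbb{R})$ and changing variables through the absolutely continuous map $\tt_n=\ss_n^{-1}$ (Remark~\ref{RM34}~(1)) yields $\int_{I_n}(d\varphi/d\ss_n)^2\,d\ss_n=\int_{I_n}\varphi'(x)^2(1/g_n(x))\,dx=\int_{I_n}(d\varphi/d\bar{\ss}_n)^2\,d\bar{\ss}_n$, and summing shows that $\EE(\varphi,\varphi)$ and $\EE'(\varphi,\varphi)$ are both equal to $\tfrac12\sum_n\int_{I_n}(d\varphi/d\bar{\ss}_n)^2\,d\bar{\ss}_n$.

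For (iii) I would verify the two conditions of Theorem~\ref{THM37} for $(\EE',\FF')$ (legitimate by (i)). Condition~(1), absolute continuity of each $\bar{\mathfrak{s}}_k$, is clear because $d\bar{\mathfrak{s}}_k=\sum_j g_{k_j}\,dx|_{I_{k_j}}$ is Radon on $(\bar{\mathfrak{a}}_k,\bar{\mathfrak{b}}_k)$ with locally integrable density. For condition~(2), suppose $\bar{\mathtt{I}}_i$ and $\bar{\mathtt{I}}_j$ were scale-connected with respect to $\{\bar{\mathfrak{s}}_k\}$ for some $i\neq j$, with $\bar{\mathtt{I}}_i$ to the left of $\bar{\mathtt{I}}_j$; choose $I_{i_1}\in\bar{\mathfrak{I}}_i$, $I_{j_1}\in\bar{\mathfrak{I}}_j$ with interior fixed points $e_{i_1}<e_{j_1}$. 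Writing $\lambda'_{\ss}=\sum_k d\bar{\mathfrak{s}}_k|_{\bar{\mathtt{I}}_k}$ and $\lambda'_l=dx|_{(\cup_k\bar{\mathtt{I}}_k)^c}$, one has $\lambda'_{\ss}=\lambda_{\bar{\ss}}$ as measures, and (because $\cup_n I_n\subset\cup_k\bar{\mathtt{I}}_k$ off a countable set and each $\lambda_l(\bar{\mathtt{I}}_k)=0$) $\lambda_l=\lambda'_l$ on every interval. Covering $[e_{i_1},e_{j_1}]$ by the at most three subintervals determined by $e_{i_1},\bar{\mathfrak{e}}_i,\bar{\mathfrak{e}}_j,e_{j_1}$ --- the outer two lying in $\bar{\mathtt{I}}_i$ resp.\ $\bar{\mathtt{I}}_j$, where $\bar{\mathfrak{s}}_i$ resp.\ $\bar{\mathfrak{s}}_j$ is finite and $\lambda'_l$ vanishes, and the middle one having finite $\lambda'_\ss$-measure and zero $\lambda'_l$-measure by our supposition --- gives $\lambda_{\bar{\ss}}([e_{i_1},e_{j_1}])<\infty$ and $\lambda_l([e_{i_1},e_{j_1}])=0$, i.e.\ $I_{i_1}\overset{\bar{\ss}}{\sim}I_{j_1}$, contradicting $\bar{\mathfrak{I}}_i\neq\bar{\mathfrak{I}}_j$. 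Hence every $\bar{\mathtt{I}}_i$ is scale-isolated with respect to $\{\bar{\mathfrak{s}}_k\}$, Theorem~\ref{THM37} yields (iii), and Theorem~\ref{THM21} then identifies $(\bar{\EE},\bar{\FF})$ with $(\EE',\FF')$, which is the assertion.

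I expect the technical core --- and the main obstacle --- to be step (ii) together with the bookkeeping in condition~(2): pushing the one-dimensional change of variables through a scale function $\ss_n$ that need not be absolutely continuous (so that only its absolutely continuous part $\bar{\ss}_n$ survives in the energy of a smooth function), and correctly tracking how the effective intervals merged into a single $\bar{\mathtt{I}}_k$, the Lebesgue-null gaps between them, and the genuinely unoccupied regions interact when comparing $\lambda_l$ with $\lambda'_l$ and $\lambda_{\bar{\ss}}$ with $\lambda'_{\ss}$.
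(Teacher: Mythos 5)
Your proposal is correct and follows essentially the same route as the paper: build the candidate form from the merged effective intervals $\{(\bar{\mathtt{I}}_k,\bar{\mathfrak{s}}_k)\}$ via Theorem~\ref{THM21}, verify $C_c^\infty(\mathbb{R})$ lies in its domain with matching energy through the identity $(dx/d\ss_n)^2\,d\ss_n=(1/g_n)\,dx$ and Lemma~\ref{LM33}, and then invoke Theorem~\ref{THM37} (absolute continuity of each $\bar{\mathfrak{s}}_k$ plus scale-isolation) to conclude it is the closure of $C_c^\infty(\mathbb{R})$. Your three-subinterval covering argument for scale-isolation merely spells out what the paper dismisses as "easily deduced from the definition of the equivalence classes," and is a welcome addition of detail rather than a different method.
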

 \begin{proof}
 Let $(\tilde{\EE},\tilde{\FF})$ be the Dirichlet form \eqref{EQ2FUL} with the effective intervals $\{(\bar{\mathtt{I}}_k, \bar{\mathfrak{s}}_k): k\geq 1\}$. It suffices to show $(\tilde{\EE},\tilde{\FF})=(\bar{\EE},\bar{\FF})$.

 We first assert $C_c^\infty(\mathbb{R})\subset \tilde{\FF}$. In fact, since $g_n(x)>0$ a.e. for any $n$ and $\lambda_l(\bar{\mathtt{I}}_k)=0$, it follows that $\bar{\mathfrak{s}}_k$ is absolutely continuous on $\bar{\mathtt{I}}_k$ and $\bar{\mathfrak{s}}'_k>0$ a.e. on $\bar{\mathtt{I}}_k$. Thus the inverse function $\bar{\mathfrak{t}}_k:=\bar{\mathfrak{s}}_k^{-1}$ is also absolutely continuous. Furthermore, for any $L>0$,
 \[
 	\sum_{k} \int_{\bar{\mathtt{I}}_k\bigcap (-L, L)} \frac{1}{\bar{\mathfrak{s}}'_k(x)}dx = \sum_{n} \int_{I_n \bigcap (-L, L)} \frac{1}{g_n(x)}dx.
 \]
 Note that
 \[
 	\frac{1}{g_n(x)}dx=\left(\frac{dx}{d\ss_n} \right)^2d\ss_n.
 \]
 Hence we have
 \[
 \sum_{k} \int_{\bar{\mathtt{I}}_k\bigcap (-L, L)} \frac{1}{\bar{\mathfrak{s}}'_k(x)}dx = \sum_{n} \int_{I_n \bigcap (-L, L)} \left(\frac{dx}{d\ss_n} \right)^2d\ss_n<\infty
 \]
 by Lemma~\ref{LM33} and $C_c^\infty(\mathbb{R})\subset \FF$. Applying Lemma~\ref{LM33} and Remark~\ref{RM34}~(4) to $\{(\bar{\mathtt{I}}_k, \bar{\mathfrak{s}}_k): k\geq 1\}$, we have the inclusion $C_c^\infty(\mathbb{R})\subset \tilde{\FF}$.
 For any function $\varphi \in C_c^\infty(\mathbb{R})$, we have
 \[
 \begin{aligned}
 	\tilde{\EE}(\varphi, \varphi)&= \frac{1}{2}\sum_{k} \int_{\bar{\mathtt{I}}_k} \varphi'(x)^2\frac{1}{\bar{\mathfrak{s}}'_k(x)}dx  \\
 	&=\frac{1}{2}\sum_{n} \int_{I_n}\varphi'(x)^2 \left(\frac{dx}{d\ss_n} \right)^2d\ss_n \\
 	&=\frac{1}{2}\sum_{n} \int_{I_n} \left(\frac{d\varphi}{d\ss_n} \right)^2d\ss_n \\
 	&=\EE(\varphi, \varphi)=\bar{\EE}(\varphi, \varphi).
 \end{aligned}\]
 Finally, we need only to prove $C_c^\infty(\mathbb{R})$ is a special standard core of $(\tilde{\EE},\tilde{\FF})$. By the definition of the equivalence class, we can easily deduce that each $\bar{\mathtt{I}}_k$ is scale-isolated with respect to $\{\bar{\mathfrak{s}}_k: k\geq 1\}$. Notice that $\bar{\mathfrak{s}}_k$ is absolutely continuous. By applying Theorem~\ref{THM37} to $\{(\bar{\mathtt{I}}_k, \bar{\mathfrak{s}}_k): k\geq 1\}$, we conclude that $C_c^\infty(\mathbb{R})$ is a special standard core of $(\tilde{\EE},\tilde{\FF})$. That completes the proof.
 \end{proof}

 \begin{remark}
 Consider the following example. Let $I_1:=(0,1]$ and $\ss_1$ be the scale function on $I_1$ given by \eqref{EQ3SNX2}. Further let $(\EE,\FF)$ be the strongly local and regular Dirichlet form on $L^2(\mathbb{R})$ with only one effective interval $(I_1, \ss_1)$, i.e.
 \[
 \begin{aligned}
 	\FF&:=\left\{u\in L^2(\mathbb{R}): u|_{(0,1]} \in \FF^{(\ss_1)} \right\},\\
 	\EE(u,v)&:= \frac{1}{2}\int_0^1 \frac{du|_{(0,1]}}{d\ss_1}\frac{dv|_{(0,1]}}{d\ss_1}d\ss_1,\quad u,v \in \FF,
\end{aligned} \]
where $\FF^{(\ss_1)}$ is given by \eqref{EQ2FSU} with $J=I_1, \ss=\ss_1$. Clearly, $C_c^\infty(\mathbb{R})\subset \FF$. Note that the absolutely continuous part of $\ss_1$ is $\bar{\ss}_1(x)=x$. Thus the $C^\infty$-merging of the unique equivalence class is $(\bar{\mathtt{I}}_1, \bar{\mathfrak{s}}_1)$ with $\bar{\mathtt{I}}_1=[0,1]$ and $\bar{\mathfrak{s}}_1(x)=x$. It follows that the closure of $C_c^\infty(\mathbb{R})$ in $(\EE,\FF)$ is
\[
\begin{aligned}
	&\bar{\FF}=\left\{u\in L^2(\mathbb{R}): u|_{[0,1]}\in H^1([0,1])\right\},\\
	&\bar{\EE}(u,v)=\frac{1}{2}\int_0^1 u'(x)v'(x)dx,\quad u,v\in \bar{\FF}.
\end{aligned}\]
The associated diffusion process on the effective interval $[0,1]$ is the reflected Brownian motion.

In this example, $0$ is a trap relative to $(\EE,\FF)$ and thus $\{0\}$ is an exceptional set relative to $(\EE,\FF)$. However, $\{0\}$ is not an exceptional set relative to $(\bar{\EE},\bar{\FF})$. Notice that $(\bar{\EE},\bar{\FF})$ is a regular Dirichlet subspace of $(\EE,\FF)$. As stated in \cite[\S2]{LY16}, if $(\EE^1,\FF^1)$ is a regular Dirichlet subspace of $(\EE^2,\FF^2)$ and denote their $1$-capacities by $\text{Cap}^1$ and $\text{Cap}^2$ respectively, then $\text{Cap}^1(A)\geq \text{Cap}^2(A)$ for any appropriate set $A$. The above example tells us the strict inequality may hold. Intuitively speaking, a part of the state space with no sense for a Dirichlet form could be very important to its regular Dirichlet subspace (of course, not vice versa).
 \end{remark}

 \section{Killing inside}\label{SEC5}

 All the discussions above concern only the strongly local Dirichlet forms. That means the associated diffusion processes have no killing inside. This section is devoted to explain the following fact: the presence of killing inside has no influence for all the results above. The main tools we shall use are the resurrected transform and killing transform of the Dirichlet form, see \cite[Theorems~5.2.17 and 5.1.6]{CF12}. Note that any function in this section is taken as its quasi-continuous version.

We first consider a regular and local Dirichlet form $(\EE,\FF)$ on $L^2(I,m)$, where $I=\langle l, r \rangle$ and $m$ is a fully supported Radon measure on $I$. Let $k$ be the killing measure of $(\EE,\FF)$ in its Beurling-Deny decomposition. Then $k$ is a Radon smooth measure on $I$ relative to $(\EE,\FF)$. The resurrected Dirichlet form of $(\EE,\FF)$ can be deduced as follows. Let $\FF_\mathrm{e}$ be the extended Dirichlet space of $(\EE,\FF)$ and define
\[
	\EE^\mathrm{res}(u,v):=\EE(u,v)-\int_I u(x)v(x)k(dx),\quad u,v\in \FF_\mathrm{e}.
\]
 Clearly, $(\EE^\mathrm{res}, \FF_\mathrm{e}\cap L^2(I,m+k))$ is a regular Dirichlet form on $L^2(I,m+k)$. Denote the extended Dirichlet space of this Dirichlet form by $\FF^\mathrm{res}_\mathrm{e}$ and set
 \[
 \FF^\mathrm{res}:= \FF^\mathrm{res}_\mathrm{e}\cap L^2(I,m).
 \]
 Then $(\EE^\mathrm{res}, \FF^\mathrm{res})$ is a regular and strongly local Dirichlet form on $L^2(I,m)$ sharing the same set of quasi notions with $(\EE,\FF)$ (Cf. \cite[Theorem~5.2.17]{CF12}). It is called the resurrected Dirichlet form of $(\EE,\FF)$. Furthermore, we have
 \begin{equation}\label{EQ4FLI}
 	\FF^\mathrm{res}\cap L^2(I,k) =\FF^\mathrm{res}_\mathrm{e}\cap L^2(I, m+k) =\FF_\mathrm{e}\cap L^2(I,m+k)=\FF
 \end{equation}
and
\begin{equation}\label{EQ4EUV}
	\EE(u,v)=\EE^\mathrm{res}(u,v)+\int_I u(x)v(x)k(dx),\quad u,v\in \FF^\mathrm{res}\cap L^2(I,k)=\FF.
\end{equation}
Thus on the contrary, $(\EE,\FF)$ is the killed Dirichlet form of $(\EE^\mathrm{res},\FF^\mathrm{res})$ induced by $k$. Since Theorem~\ref{THM21} gives an expression of $(\EE^\mathrm{res}, \FF^\mathrm{res})$, we can obtain the following characterization of $(\EE,\FF)$.

\begin{theorem}\label{THM51}
Let $I=\langle l, r\rangle$ be an interval and $m$ a fully supported Radon measure on $I$. Then $(\EE,\FF)$ is a regular and local Dirichlet form on $L^2(I, m)$ if and only if there exist a set of at most countable disjoint intervals $\{I_n=\langle a_n,b_n\rangle: I_n\subset I, n\geq 1\}$, a scale function $\ss_n\in \SS_\infty(I_n)$ for each $n\geq 1$ and  a Radon measure $k$ on $I$ with $k\ll m$ on $I\setminus \cup_{n \geq 1}I_n$, such that
\begin{equation}\label{EQ4FUL}
\begin{aligned}
	&\FF=\left\{u\in L^2(I,m+k): u|_{I_n}\in \FF^{(\ss_n)}, \sum_{n\geq 1}\EE^{(\ss_n)}(u|_{I_n}, u|_{I_n})<\infty  \right\},  \\
	&\EE(u,v)=\sum_{n\geq 1}\EE^{(\ss_n)}(u|_{I_n}, v|_{I_n})+\int_I u(x)v(x)k(dx),\quad u,v \in \FF,
\end{aligned}
\end{equation}
where for each $n\geq 1$, $(\EE^{(\ss_n)}, \FF^{(\ss_n)})$ is given by \eqref{EQ2FSU} with the scale function $\ss_n$.  Moreover, the intervals $\{I_n: n\geq 1\}$ and scale functions $\{\ss_n:n\geq 1\}$ are uniquely determined, if the difference of order is ignored.
\end{theorem}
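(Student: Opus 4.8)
The plan is to deduce Theorem~\ref{THM51} from Theorem~\ref{THM21} by passing through the resurrection and killing transforms, using the identities \eqref{EQ4FLI} and \eqref{EQ4EUV} that have already been recorded, and reading every function as its quasi-continuous version. For the necessity, I would start from a regular and local $(\EE,\FF)$ on $L^2(I,m)$ and read off its Beurling--Deny decomposition: locality forces the jump part to vanish, and the killing part is a Radon smooth measure $k$. The resurrected form $(\EE^{\mathrm{res}},\FF^{\mathrm{res}})$ is regular and strongly local on $L^2(I,m)$, so Theorem~\ref{THM21} supplies at most countably many disjoint effective intervals $\{(I_n,\ss_n):n\geq 1\}$ with $\ss_n\in\SS_\infty(I_n)$ representing it via \eqref{EQ2FUL}; substituting this representation into \eqref{EQ4FLI}--\eqref{EQ4EUV} yields \eqref{EQ4FUL}. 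It then remains to check that $k\ll m$ on $N:=I\setminus\bigcup_{n\geq 1}I_n$. Every point of $N$ is a trap of the resurrected diffusion, so $N$ is an invariant set and the restriction of $(\EE^{\mathrm{res}},\FF^{\mathrm{res}})$ to $N$ is the trivial form $(\mathbf{0},L^2(N,m|_N))$, whose $1$-capacity of a Borel set $B\subset N$ is controlled by $m(B)$; hence a subset of $N$ is $\EE^{\mathrm{res}}$-polar if and only if it is $m$-null. Since $k$ is smooth relative to $(\EE^{\mathrm{res}},\FF^{\mathrm{res}})$ (the two forms share the same quasi notions by \cite[Theorem~5.2.17]{CF12}), $k$ charges no $m$-null subset of $N$, i.e. $k|_N\ll m|_N$.

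For the sufficiency, given the data I would let $(\EE^0,\FF^0)$ be the regular and strongly local Dirichlet form on $L^2(I,m)$ furnished by Theorem~\ref{THM21} with effective intervals $\{(I_n,\ss_n):n\geq 1\}$. A subset of $I$ is $\EE^0$-polar precisely when its trace on each $I_n$ is $\EE^{(\ss_n)}$-polar and its trace on $N$ is $m$-null; since the underlying one-dimensional diffusion on each $I_n$ hits every point of $I_n$, there are no non-empty $\EE^{(\ss_n)}$-polar sets, so $\EE^0$-polarity reduces to $m$-nullity of the trace on $N$. Therefore the hypothesis $k\ll m$ on $N$ makes the Radon measure $k$ smooth relative to $(\EE^0,\FF^0)$, and by \cite[Theorem~5.1.6]{CF12} its killing transform is a regular Dirichlet form on $L^2(I,m)$ with domain $\FF^0\cap L^2(I,k)$ and form $u,v\mapsto\EE^0(u,v)+\int_I u(x)v(x)\,k(dx)$. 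Unwinding $\FF^0\cap L^2(I,k)=\{u\in L^2(I,m+k):u|_{I_n}\in\FF^{(\ss_n)},\ \sum_{n\geq 1}\EE^{(\ss_n)}(u|_{I_n},u|_{I_n})<\infty\}$ shows this coincides with \eqref{EQ4FUL}; and the resulting form is local because a killing transform of a strongly local form has continuous sample paths up to its killing time.

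For uniqueness, $k$ is the killing measure in the Beurling--Deny decomposition of $(\EE,\FF)$ (the representation \eqref{EQ4FUL} has no jump part and strongly local part $\sum_{n\geq 1}\EE^{(\ss_n)}(\cdot|_{I_n},\cdot|_{I_n})$), hence uniquely determined; then $(\EE^{\mathrm{res}},\FF^{\mathrm{res}})$ is determined through \eqref{EQ4FLI}--\eqref{EQ4EUV}, and the uniqueness clause of Theorem~\ref{THM21} pins down $\{I_n:n\geq 1\}$ and $\{\ss_n:n\geq 1\}$ up to reordering. The only step that goes beyond bookkeeping is the description of the smooth measures of the strongly local form on the trap region $N$: namely that the $\EE^0$- (equivalently $\EE^{\mathrm{res}}$-) polar subsets of $N$ are exactly its $m$-null subsets, which is precisely what lets the single condition ``$k\ll m$ on $I\setminus\bigcup_{n\geq 1}I_n$'' — with no constraint on $k$ over the effective intervals — characterize the admissible killing measures. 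Everything else follows by combining the resurrection and killing transform machinery of \cite{CF12} with Theorem~\ref{THM21}.
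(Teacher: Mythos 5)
Your proposal is correct and follows essentially the same route as the paper: resurrect $(\EE,\FF)$ to a regular strongly local form, apply Theorem~\ref{THM21} to it, and recover $(\EE,\FF)$ as the killing transform by the smooth Radon measure $k$, with the condition $k\ll m$ off $\bigcup_n I_n$ coming from the fact that the polar subsets of the trap region are exactly its $m$-null subsets. You merely spell out a few details the paper leaves implicit (the capacity computation on the trap set and the uniqueness of $k$ via the Beurling--Deny decomposition), which are accurate.
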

\begin{proof}
 For the necessity, let $k$ be the killing measure of $(\EE,\FF)$ and consider the resurrected Dirichlet form $(\EE^\mathrm{res},\FF^\mathrm{res})$ of $(\EE,\FF)$.  Since $(\EE^\mathrm{res},\FF^\mathrm{res})$ is a regular and strongly local Dirichlet form on $L^2(I,m)$, it follows that $(\EE^\mathrm{res},\FF^\mathrm{res})$ is expressed as \eqref{EQ2FUL} with a class of effective intervals $\{(I_n,\ss_n): n\geq 1\}$. Since $(\EE,\FF)$ and $(\EE^\mathrm{res}, \FF^\mathrm{res})$ share the same set of quasi notions, and  the subset of $I\setminus \cup_{n\geq 1}I_n$ with zero $m$-measure is exceptional relative to $(\EE^\mathrm{res}, \FF^\mathrm{res})$, we can conclude $k\ll m$ on $I\setminus \cup_{n\geq 1}I_n$.
 Finally, \eqref{EQ4FLI} and \eqref{EQ4EUV} implies the expression \eqref{EQ4FUL} of $(\EE,\FF)$.

For the sufficiency, we consider the regular Dirichlet form \eqref{EQ2FUL} on $L^2(I,m)$. Note that the exceptional subset of $I_n$ relative to \eqref{EQ2FUL} must be the empty set. Then $k\ll m$ on $I\setminus \cup_{n \geq 1}I_n$ implies $k$ is a Radon smooth measure relative to \eqref{EQ2FUL} and thus \eqref{EQ4FUL} is the killed Dirichlet form of \eqref{EQ2FUL} induced by $k$. Therefore, \eqref{EQ4FUL} is a regular and local Dirichlet form on $L^2(I, m)$. That completes the proof.
\end{proof}

We end this section with some explanations for the extensions of Theorems~\ref{THM37} and \ref{THM414} to a local Dirichlet form. As proved above, \eqref{EQ2FUL} is the resurrected Dirichlet form of \eqref{EQ4FUL} and \eqref{EQ4FUL}  is the killed Dirichlet form of \eqref{EQ2FUL} induced by $k$. From \cite[Theorems~5.1.6 and 5.2.17]{CF12}, we know that a class $\mathcal{C}$ is a special standard core of \eqref{EQ2FUL} if and only if $\mathcal{C}$ is a special standard core of \eqref{EQ4FUL}. Therefore, nothing else need to be changed if the Dirichlet forms in Theorems~\ref{THM37} and \ref{THM414} are only assumed to enjoy the local property.

\section*{Acknowledgement}

Example~\ref{EXA21} was inspired by the discussions with Professor Zhi-Ming Ma, and the first named author would like to thank his helpful advice.

\end{document}